\documentclass[a4paper,11pt]{article}
\usepackage[utf8]{inputenc}
\usepackage{comment}
\usepackage[titletoc]{appendix} % for pretty printing of appendix in toc
\usepackage{xcolor}
\usepackage[pdfencoding=auto, psdextra]{hyperref}
\hypersetup{
    colorlinks=true,
    }
\usepackage{enumitem} % labels in enumerate
\usepackage{diagbox} 
\usepackage{ulem}%sout
\usepackage{amssymb}
\usepackage{amsmath}
\usepackage{amsthm}
\usepackage{mathtools}
\usepackage{stmaryrd} % For double brackets
\usepackage{setspace} % horizontal spacing arrays
\usepackage{authblk}

\theoremstyle{plain}
\newtheorem{theorem}{Theorem}[section]
\newtheorem{prop}[theorem]{Proposition}
\newtheorem{lemma}[theorem]{Lemma}

\newtheorem{conj}[theorem]{Conjecture}

\newtheorem{hyp}[theorem]{Assumption}

\newtheorem{rk}[theorem]{Remark}

\newcommand{\R}{\mathbb{R}}
\def\P{\mathbb{P}}
\newcommand{\E}{\mathbb{E}}

\newcommand{\ub}{\mathbf{u}}
\newcommand{\vb}{\mathbf{v}}
\newcommand{\C}{\mathcal{C}} 
\newcommand{\ind}{\mathbf{1}}
\newcommand{\setind}[1]{\mathbf{1}_{\left\{#1\right\}}}
\newcommand{\bbrackets}[2]{\llbracket #1, #2 \rrbracket}
\newcommand{\angles}[2]{\langle #1, #2 \rangle}

\title{Metacommunity persistence on spatially heterogeneous landscapes}
\author[1]{Manon Costa} 
\author[2]{Madeleine Kubasch\footnote{Corresponding author: madeleine.kubasch@polytechnique.edu}} 
\author[3]{Nicolas Loeuille}

\affil[1]{\small Université de Toulouse, INSA Toulouse, CNRS, Institut de Mathématiques de Toulouse, Toulouse, France}
\affil[2]{École polytechnique, Sorbonne Université, UPC, UPEC, CNRS, IRD, INRA, Institute of Ecology and Environmental Sciences, IEES, Paris, France}
\affil[3]{Sorbonne Université, UPC, UPEC, CNRS, IRD, INRA, Institute of Ecology and Environmental Sciences, IEES, Paris, France}
 
\begin{document}

\maketitle

\begin{abstract}
% 163 mots
    We are interested in the long-time behaviour of the ecological dynamics of two competing species in a spatially heterogeneous environment consisting of two habitat types. 
    Our goal is to provide conditions for the persistence of the two populations.\\
    First, we consider a spatially continuous model, formalized as an infinite-dimensional system of integro-differential equations. 
    We show that if each species would persist if it were alone, then mutual invasibility of each other's monospecific equilibrium is a sufficient condition for long time survival of both species. 
    Second, we introduce a finite-dimensional system of ordinary differential equations which approximate the spatial dynamics by averaging over a finite number of habitat types.
    We derive an analogous sufficient condition for stable coexistence, and show that in this case, there exists a positive coexistence equilibrium. \\
    Finally, we complete our theoretical result using a simulation study.
    Our results indicate that mutual invasibility also is a necessary condition for stable coexistence in both models. 
    In addition, we show that the finite-dimensional model underestimates species' persistance, which indicates that spatial heterogeneity promotes survival. 
    \bigskip
    
    \noindent \textbf{Code availability:} \url{https://gitlab.com/m-kubasch/metacommunity-simulations} 
    \smallskip

    \noindent \textbf{Keywords:} Non-linear ODE, Infinite-dimensional ODE, Persistence, Ecological metacommunity, Graphon limit.
    
    \smallskip
    \noindent \textbf{MSC:} 34D05, 37C75, 92D40, 60F99.
\end{abstract}

\section{Introduction}

Many human activities such as agriculture \cite{greenFarmingFateWild2005, loeuilleChapterSixEcoEvolutionary2013} or the creation of natural reserves \cite{diamondIslandDilemmaLessons1975, mayIslandBiogeographyDesign1975}, alter the environment's spatial heterogeneity. Indeed, new habitat types such as farmed or urban areas are introduced, which may either be concentrated in relatively aggregated areas, or make arise mosaics of natural and artificial habitat patches. Understanding the impact of spatial heterogeneity on biodiversity thus is key to design sustainable land use strategies. 

A suitable framework for exploring this issue is provided by metacommunity models, which describe the ecological dynamics of a pool of interacting species whose habitat is composed of several localities or patches that are connected by colonization, which depends on the species' dispersal ability and on competition among species \cite{mouquetCommunityPatternsSourceSink2003, leiboldMetacommunityConceptFramework2004}. While patches may be characterized by their spatial position \cite{ovaskainenSpatiallyStructuredMetapopulation2001}, their number is typically assumed to be finite, thus leading to a discrete spatial structure. 
\medskip

In this paper, we introduce a spatially continuous metacommunity model of two species $u$ and $v$ competing for available resources in a spatially heterogeneous environment $\Omega \subset\R^2$. More precisely, we consider an integro-differential model for the dynamics  of the two populations $u,v : \R_+ \times \Omega \to [0,1] $ which can be written as 
\begin{equation}
\label{eq:ide-system_dim2}\left\{
\begin{aligned}
&\partial_t u(t,x)  = - \tau(x) u(t,x) + (1-u(t,x)-v(t,x))\int_{\Omega} u(t,y) c(x,y) dy\\
&\partial_t v(t,x)  = - \sigma(x) v(t,x) + (1-u(t,x)-v(t,x))\int_{\Omega} v(t,y) \gamma(x,y) dy\\
&0\le u_0(x)+v_0(x)  \le 1.  \\ 
\end{aligned}\right.
\end{equation}
The parameter $\tau : \Omega\mapsto\R_+$ (resp. $\sigma$) represents the extinction rate of population $u$ (resp. $v$) depending on the environment, while the kernel $c:\Omega\times \Omega\mapsto\R_+$ (resp. $\gamma$) describes the colonization rate.
We will show that this model corresponds to a graphon-type limit of a discrete metacommunity model, where the population evolves on a network of patches. The microscopic model considered is an invasion-exclusion model in which we model only the presence of a species on a given patch and not its density. In particular, we assume that when a species is present in an environment, that environment cannot be colonized by another species (preemptive competition). Such colonization-extinction dynamics are frequently used in ecological literature 
\cite{levinsDemographicGeneticConsequences1969, mouquetCommunityPatternsSourceSink2003, leiboldMetacommunityConceptFramework2004} and can also be interpreted in epidemiology as \textit{SIS}-type models \cite{lajmanovich_deterministic_1976, DDZ22}. 

We will also consider a specific case where the environment can be decomposed into  $\Omega=A\cup N$ where $A$ stands for an agricultural environment and $N$ a natural one. Further assuming that the functions $\tau,\sigma,c$ and $\gamma$ are piecewise constant, Equation \eqref{eq:ide-system_dim2} admits a discrete space analogous. It is obtained by considering the dynamics of $u$ and $v$ on the subspaces $A$ and $N$. This leads to studying the solution $(u_A,u_N,v_A,v_N)$ to a 4-dimensional system of differential equations:
\begin{equation}
\label{eq:ode-system}
   \left\{ \begin{aligned}
    u_A' &= -\tau_A u_A + (p_A - u_A - v_A) (c_{AA}u_A + c_{AN} u_N), \\
    u_N' &= -\tau_N u_N + (p_N - u_N - v_N) (c_{NA}u_A + c_{NN} u_N), \\
    v_A' &= -\sigma_A v_A + (p_A - u_A - v_A) (\gamma_{AA}v_A + \gamma_{AN} v_N), \\
    v_N' &= -\sigma_N v_N + (p_N - u_N - v_N) (\gamma_{NA}v_A + \gamma_{NN} v_N). \\
    \end{aligned}\right.
\end{equation}
In this case, note that $(u_A,u_N)$ (resp. $(v_A,v_N)$) corresponds to the integrated density $u$ (resp.$v$) over $A$ and $N$, and that $p_A=|A|$ and $p_N=|N|$ are the respective sizes of the agricultural and natural areas. A more precise derivation is given below in Section \ref{subsec:derivation}. In particular, this discrete space model is a harlequin model, as both local extinction and colonization rates are entirely determined by the habitat types of the involved patches
\cite{hornCompetitionFugitiveSpecies1972, leiboldSpeciesSortingPatch2015}. 
\medskip

This article aims at obtaining a persistence extinction criterion for model \eqref{eq:ide-system_dim2} and for its discrete space analog \eqref{eq:ode-system}. From the point of view of applications, this amounts to predicting which species survive in a given environment, thus providing crucial information for conservation goals. 

Notably, the persistence of a single species in a spatially heterogeneous environment is already well understood. This question was studied from a epidemiological point of view, since the monospecific model can also depict the spread of an epidemic in a structured population. Persistence criteria have thus been established in mathematical epidemiology, both for the discrete \cite{lajmanovich_deterministic_1976} and continuous space versions \cite{DDZ22}. These articles prove that a single species persists if it is capable of invading the landscape when starting from an infinitesimal population. This is possible if the exponential growth rate of the metapopulation close to zero is positive. Equivalently, starting from a typical occupied patch, the species needs to colonize on average strictly more than one other patch before going locally extinct. Importantly, in continuous space, the rigorous study of the metapopulation's long-time behaviour makes use of the monotonicity of the underlying semi-flow, both in time (\textit{i.e.} all trajectories are monotone) and initial condition (\textit{i.e.} starting from two initial conditions such that one is everywhere greater than the other, this ordering is conserved over time) \cite{DDZ22}. 

When moving from metapopulation to metacommunity dynamics, there are two main difficulties. First, there is a larger panel of possible outcomes to be considered, as extinction may correspond either to extinction of a single population, or to extinction of both populations. Second, the dynamics are no longer monotonous since both species compete for available habitat, whereas cooperation occurs through colonization among local populations of a given species. 
\medskip

The article is organized as follows. First, in Section \ref{sec:model}, we derive the spatially continuous model \eqref{eq:ide-system_dim2} from a stochastic graph model. We further state our main results on metacommunity persistence obtaining a partial classification of the limiting behaviour of the community. We complete our theoretical results by exploring numerically the cases for which the long-time behaviour of Equation \eqref{eq:ide-system_dim2} remains unresolved. Next, in Section \ref{sec:harlequin}, we introduce the discrete space model \eqref{eq:ode-system} that arises as an approximation of the continuous space model. We obtain persistence results for the metacommunity  in that setting. We also evaluate through simulations whether the discrete space model provides a satisfying approximation of the spatially continuous model, when extinction and colonization rates are not piecewise constant. Finally, Sections \ref{sec:proofs_ide} and \ref{sec:proofs_hq} are devoted to proofs. %

\section{Model and main results}
\label{sec:model}

\subsection{Model derivation}
\label{subsec:derivation}
In this section, we derive the integro-differential model \eqref{eq:ide-system_dim2} from a stochastic model describing metacommunity dynamics on a finite spatial random graph. The aim of this step is twofold. On the one hand, it allows us to properly connect the integro-differential model to patch occupancy models, which are well established in theoretical ecology \cite{levinsDemographicGeneticConsequences1969,ovaskainenSpatiallyStructuredMetapopulation2001, mouquetCommunityPatternsSourceSink2003, leiboldMetacommunityConceptFramework2004}. On the other hand, this argument also establishes existence and uniqueness of the solution to Equation \eqref{eq:ide-system_dim2}. 

This will be achieved for a more general class of metacommunity models, which are of interest \textit{per se}, and of which the integro-differential model \eqref{eq:ide-system_dim2} arises as a special case. 

Before proceeding, let us introduce some general notations. For any integers $n \leq m$, we write $\bbrackets{n}{m} = \{n, \dots, m\}$. For a measurable space $(E, \mathcal{E})$, let $\mathcal{M}_1(E)$ designate the set of probability measures on $E$, and for $\mu \in \mathcal{M}_1(E)$ and $f$ an appropriate real-valued test function (either non-negative or bounded), we let $\angles{\mu}{f} = \int_E f(x) \mu(dx)$. Also, for $x \in E$, $\delta_x$ is the Dirac measure at $x$. Further, $\mathcal{B}_b(E, \R)$ corresponds to the set of bounded measurable functions $f : E \to \R$. Finally, given a Polish metric space $X$, $\mathbb{D}(\R_+, X)$ corresponds to the space of right-continuous left-limited (càdlàg) functions $\R_+ \to X$, endowed with the Skorokhod topology. 
Let $f$ and $g$ be two functions from $E$ to $\mathbb R$, we will denote $f\le g$ if for all $x\in E$, $f(x)\le g(x)$. The same notation will hold for two vectors $u,v\in\R^n$, we say $u\le v$ if $\forall 1\le i\le n$, $u_i\le v_i$.

%\madeleine{Il faut mentionner quelque part que nos preuves sont très proches de celles de Delmas et al 2024, la seule différence étant dans l'étape d'unicité puisque Delmas et co se servent de l'existence et de l'unicité des solutions à l'IDE pour conclure à l'existence et l'unicité des solutions à l'équation valeur mesure, tandis que nous on fait l'inverse.}
%\manon{J'ai mis une phrase au début de la preuve.}

\paragraph{Stochastic metacommunity model on finite networks}

Let $\Omega \subset \R^2$ be a compact set, representing the agricultural landscape, and $\mathcal{S} = \bbrackets{1}{S}$ the set of $S$ species of interest. Up to an appropriate rescaling, we can always consider that $|\Omega|=1$. The metacommunity will spread on a network connecting $K$ patches, each of which are characterised by a spatial position $(x_1, \dots, x_K)\in\Omega^K$. 
In addition, each patch has an occupation status $w_k$, being either empty ($w_k = 0$), or occupied by a given species $(w_k \in \mathcal{S})$. 

While the patches spatial positions do not change over time, their occupation status does, according to the following dynamics. Consider a patch of spatial position $x$, inhabited by a given species $w \in \mathcal{S}$. Two events may occur:
\begin{enumerate}
    \item \textbf{Local extinction: } The local population goes extinct at rate $\tau_w(x)$, in which case the patch becomes of type $(x, 0)$. 
    \item \textbf{Colonisation: } The local population colonises another patch of type $(y, w')$, at rate $K^{-1} c_{w, w'}(x,y)$. The arrival patch becomes of type $(y, w)$.
\end{enumerate}
We will work under the following assumption.
\begin{hyp}
\label{hyp:jump-rates}
\begin{enumerate}
    \item For any $w \in \mathcal{S}$, the application $x \in \Omega \mapsto \tau_w(x) > 0$.
    \item For any $w, w' \in \mathcal{S}$, the application $(x,y) \in \Omega^2 \mapsto c_{w,w'}(x,y) \geq 0$ is continuous and connected, \textit{i.e.} for any Borel set $A \subset \Omega$ such that $|A| > 0$ and $|A^C| > 0$, 
    \[ \int_{A \times A^C} c_{w,w'}(x,y) dx dy > 0.  \]
\end{enumerate}  
%For any $w, w' \in \mathcal{S}$, the applications $x \in \Omega \mapsto \tau_w(x) > 0$ and $(x,y) \in \Omega^2 \mapsto c_{w,w'}(x,y) \geq 0$ are continuous.
\end{hyp}

We are interested in studying the stochastic process $\eta^K_t$, which tracks the empirical type distribution of our set of patches at time $t \geq 0$. Informally, $\eta^K$ may be defined by 
\begin{equation*}
    \eta^K_t = \frac{1}{K} \sum_{k = 1}^K \delta_{(x_k, w_k(t))}.
\end{equation*}
%where we recall that $\delta_{(x, w)}$ designates the Dirac measure at $(x, w)$ on $E = \Omega \times \bbrackets{0}{S}$. Throughout the following, $\mathcal{M}_1(E)$ designates the space of probability measures on $E$. 
In particular, for any $t \geq 0$, $\eta^K_t \in \mathcal{M}_1(E)$.

In order to rigorously define $\eta^K$, we will characterize it as the unique solution to a stochastic differential equation driven by Poisson Point Measures. Consider $Q_0$ and $Q_1$ two independent Poisson Point Measures satisfying the following. $Q_0$ is defined on $\R_+ \times E_0$ with $E_0 = \bbrackets{1}{K} \times \R_+$ and of intensity 
\[\mu_0(ds, dk, d\theta) = ds \otimes \mu_\#(dk) \otimes d\theta,  \]
and $Q_1$ is defined on $\R_+ \times E_1$ with $E_1 =\bbrackets{1}{K}^2 \times \R_+$ and of intensity 
\[\mu_1(ds, dk, d\ell, d\theta) = ds \otimes \mu_\#(dk) \otimes \mu_\#(d\ell) \otimes d\theta,  \]
where $ds, d\theta$ designate the Lebesgue measure and $\mu_\#$ the counting measure on $\bbrackets{1}{K}$. We are now ready to properly define $\eta^K$. 

\begin{prop}
    Given $(x_1, \dots, x_K) \in \Omega^K$ and $(w_1(0), \dots, w_K(0)) \in \bbrackets{0}{S}^K$, define 
    \[ \eta^K_0 = \frac{1}{K} \sum_{k = 1}^K \delta_{(x_k, w_k(0))}. \]
    Under Assumption \ref{hyp:jump-rates}, $\eta^K$ is the unique strong solution to the following measure-valued stochastic differential equation:
    \begin{equation*}
        \begin{aligned}
            \eta^K_t &= \eta^K_0 + \frac{1}{K} \int_0^t \int_{E_0} \setind{\theta \leq \tau_{w_k(s-)}(x_k)} \big(\delta{(x_k, 0)} - \delta{(x_k, w_k(s-))}  \big)Q_0(ds, dk, d\theta) \\ 
            & + \frac{1}{K} \int_0^t \int_{E_1} \setind{\theta \leq K^{-1} c_{w_k(s-), w_\ell(s-)}(x_k, x_\ell)} \big(\delta{(x_\ell, w_k(s-))} - \delta{(x_\ell, w_\ell(s-))}  \big)Q_1(ds, dk,d\ell, d\theta).
        \end{aligned}
    \end{equation*}
\end{prop}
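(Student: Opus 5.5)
The plan is to build the solution pathwise, jump by jump. The state space is finite: positions are fixed and $w=(w_1,\dots,w_K)\in\bbrackets{0}{S}^K$. Moreover, $\eta^K_t$ determines $w(t)$ given the (w.l.o.g.\ distinct, or simply tracked through the labelled vector) positions, so it suffices to construct and characterize the càdlàg process $w(t)$. The first step is to bound the total jump rates. Since $\Omega$ is compact and $\tau_w$, $c_{w,w'}$ are continuous (for $\tau_w$ we only need finitely many values at $x_1,\dots,x_K$), we have
\[
\bar\tau=\max_{k,w}\tau_w(x_k)<\infty,\qquad \bar c=\max_{k,\ell,w,w'}c_{w,w'}(x_k,x_\ell)<\infty .
\]
Consequently, only the atoms of $Q_0$ with $\theta\le\bar\tau$ and the atoms of $Q_1$ with $\theta\le K^{-1}\bar c$ can ever produce a jump. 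Restricted to $[0,T]$, these atoms form a.s.\ finitely many points, since their intensities are $K\bar\tau T$ and $K\bar c T$, and their time coordinates are a.s.\ distinct.

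For existence, order these candidate atoms as $T_1<T_2<\dots$, which tend to infinity a.s. Set $w(t)=w(0)$ on $[0,T_1)$. At $T_n$, apply the update using the state $w(T_n-)$. For a $Q_0$-atom $(k,\theta)$, set $w_k=0$ if $\theta\le\tau_{w_k(T_n-)}(x_k)$, and leave the state unchanged otherwise. For a $Q_1$-atom $(k,\ell,\theta)$, set $w_\ell=w_k(T_n-)$ if $\theta\le K^{-1}c_{w_k,w_\ell}(x_k,x_\ell)$. One small point concerns the convention for empty patches: colonisation must come from an occupied patch, so the rate with $w_k=0$ is taken to be $0$ (equivalently, set $c_{0,\cdot}\equiv0$). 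The resulting $\eta^K$ is càdlàg, adapted to the filtration generated by $Q_0,Q_1$, and it satisfies the equation, because the integrals reduce to finite sums over the atoms and each summand produces exactly the Dirac increment $\delta_{(x,\text{new})}-\delta_{(x,\text{old})}$.

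For uniqueness, take any strong solution $\tilde\eta$. Its jumps can only occur at atoms of $Q_0,Q_1$, and between consecutive $T_n$ the integrals do not move, so $\tilde\eta$ is constant there. A thinned atom with $\theta$ above the bound contributes zero. Proceed by induction on $n$: if $\tilde\eta=\eta$ on $[0,T_n)$, then the integrand at $T_n$ depends only on the left limit, so the jump at $T_n$ coincides with that of $\eta$. Hence $\tilde\eta=\eta$ on $[0,T_{n+1})$, and since $T_n\to\infty$ the two solutions agree for all $t$.

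The main obstacle is mild. One must justify that $\tilde\eta_{s-}$ determines the labels $w_k(s-)$ appearing in the integrand, i.e.\ that the equation is really a closed equation for $\eta^K$. I would handle this by stating the equation jointly for the labelled vector $(x_k,w_k(t))_k$, whose empirical measure is $\eta^K$, so that there is no ambiguity when positions coincide. The remaining checks are a.s.\ non-accumulation of the thinned atoms, which follows from the rate bounds above, and the measurability/predictability of the integrands, which holds because they are functions of left limits.
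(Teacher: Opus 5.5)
Your proposal is correct and is the classical argument the paper defers to with its one-line reference to Fournier--M\'el\'eard. Bounded rates give a.s.\ finitely many candidate atoms on each $[0,T]$, so the solution is built jump by jump, and pathwise uniqueness follows by induction on the atom times; your handling of the labelling issue (writing the equation for the labelled vector $(x_k,w_k(t))_k$) and of the convention $c_{0,\cdot}\equiv 0$ makes explicit two points the paper leaves implicit.
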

The proof is classical, as jump rates are bounded, and we refer to \cite{fournier_microscopic_2004} for detail.

\paragraph{Graphon limit}

We are interested in a scaling limit of the previous model when the number of patches grows to infinity. Since the network connecting the patches is dense from Assumption \ref{hyp:jump-rates}, this corresponds to a graphon limit. 
The main result of this section lies in the following Theorem, which states that under appropriate assumptions, the stochastic metacommunity model converges to a deterministic limit when the number of patches $K$ goes to infinity. This requires the following condition.
\begin{hyp}
\label{hyp:eta0}
    \begin{enumerate}
        \item The sequence of initial condition $(\eta^K_0)_{K \geq 1}$ converges in probability to $\zeta_0 \in \mathcal{M}_1(E)$, such that $\zeta_0(dx \times \bbrackets{0}{S})$ is absolutely continuous with respect to the Lebesgue measure, with density $\mu > 0$.
        \item There exists a family of functions $(u^0_i \in \mathcal{B}_b(\Omega, [0,1]), i \in \bbrackets{0}{S})$ such that $\sum_{i=0}^S u^0_i = 1$ and $\zeta_0$ satisfies 
        \begin{equation}
        \label{eq:zeta0}
            \zeta_0(dx, dw) = \mu(x)dx \sum_{i=0}^S u^0_i(x) \delta_i(dw).
        \end{equation}
        %\madeleine{Je ne suis pas sure de l'espace de fonction à considérer pour $u_0$, on peut peut-être relâcher à borélienne bornée au lieu de $C^0$ pour la condition initiale. Cela fait d'ailleurs écho à l'une de tes remarques plus loin.}
    \end{enumerate}
\end{hyp}

We are now ready to state the convergence result.
\begin{theorem}
\label{thm:lln}
    Under Assumptions \ref{hyp:jump-rates} and \ref{hyp:eta0}, the sequence $(\eta^K)_{K \geq 1}$ converges in probability in $\mathbb{D}(\R_+, \mathcal{M}_1(E))$ to the continuous deterministic measure-valued process $\zeta$ defined as follows:
    \begin{equation}
    \label{eq:def-zeta}
        \zeta_t(dx, dw) = \mu(x) dx \sum_{i = 0}^S u_i(t,x) \delta_i(dw),
    \end{equation}
    where the family of functions $(u_i, i \in \bbrackets{0}{S})$ is characterized as the unique solution to 
    \begin{equation}
    \label{eq:full-ide-system}
        \begin{aligned}
            \sum_{i=0}^S u_i & = 1,\quad u_i : \R_+ \times \Omega \to [0,1] \quad  \forall i \in \{0,\dots, S\}, \quad \text{and } \\ 
            \partial_t u_i(t,x) & = - \tau_i(x) u_i(t,x) + \sum_{j = 0}^S u_j(t,x) \int_{\Omega} u_i(t,y) c_{i,j}(x,y) \mu(y) dy\\
             &\phantom{=} - u_i(t,x) \sum_{j=1}^S  \int_{\Omega} u_j(t,y) c_{j,i}(x,y) \mu(y) dy, \quad \forall i \in \{1, \dots, S\}, 
         \end{aligned}
    \end{equation}
    with initial condition provided by $u_i(0,\cdot) = u^0_i$ for any $i \in \bbrackets{1}{S}$. 
\end{theorem}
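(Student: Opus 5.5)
The plan is the classical compactness–uniqueness scheme for measure-valued processes (tightness, identification of limit points, uniqueness of the limit equation). Since the limit is deterministic, convergence in law then upgrades to convergence in probability.

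\emph{Semimartingale decomposition.} Fix a test function $f \in \mathcal{B}_b(E,\R)$ that is continuous in $x$. From the Poisson representation of the previous Proposition we get
\[\angles{\eta^K_t}{f} = \angles{\eta^K_0}{f} + \int_0^t \Big( \angles{\eta^K_s}{\tau_w(x)(f(x,0)-f(x,w))} + \iint c_{w,w'}(x,y)\big(f(y,w)-f(y,w')\big)\eta^K_s(dx,dw)\,\eta^K_s(dy,dw')\Big) ds + M^{K,f}_t .\]
Here $M^{K,f}$ is a square-integrable martingale. Its predictable quadratic variation is bounded by $C\|f\|_\infty^2 t/K$, because each jump has size $O(\|f\|/K)$ and the total jump rate is $O(K)$; this uses that $\tau_w$ and $c_{w,w'}$ are bounded on the compact set $\Omega$, which we take continuous. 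Doob's inequality then gives $\E[\sup_{t\le T}|M^{K,f}_t|]\to 0$.

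\emph{Tightness and identification of limit points.} The space $E=\Omega\times\bbrackets{0}{S}$ is compact, so $\mathcal{M}_1(E)$ is compact and compact containment is automatic. The drift is uniformly Lipschitz in time, so the Aldous–Rebolledo criterion applied to $\angles{\eta^K}{f}$, for $f$ in a countable dense family of $C(E)$, yields tightness in $\mathbb{D}(\R_+,\mathcal{M}_1(E))$ (Roelly's criterion). Jumps have size $1/K$, so every limit point is continuous. The drift is a continuous functional of $\eta$ for the weak topology, since $c$ is continuous and the quadratic term is the product measure $\eta\otimes\eta$. Hence any limit $\zeta$ satisfies almost surely the deterministic weak equation
\[\angles{\zeta_t}{f}=\angles{\zeta_0}{f}+\int_0^t \Big(\angles{\zeta_s}{\tau(f(\cdot,0)-f)}+\angles{\zeta_s\otimes\zeta_s}{c\,\Delta f}\Big)ds .\]
The space marginal is conserved: the jumps change only $w$, and $\eta^K_t(dx\times\bbrackets{0}{S})=\eta^K_0(dx\times\bbrackets{0}{S})$. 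Therefore $\zeta_t(dx\times \bbrackets{0}{S})=\mu(x)dx$, and disintegration gives $\zeta_t(dx,dw)=\mu(x)dx\sum_i u_i(t,x)\delta_i(dw)$ with $\sum_i u_i=1$ and $u_i\in[0,1]$. Taking $f(x,w)=g(x)\ind_{w=i}$ with $g$ arbitrary and using Fubini, the weak equation becomes, for a.e. $x$, the integrated form of \eqref{eq:full-ide-system}. The gain term $\sum_j u_j\int u_i c_{i,j}\mu$ comes from colonisation of $(x,j)$ by $i$, with $j=0$ included; the loss term comes from $i$ being colonised by $j\ge1$. This yields existence of a solution.

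\emph{Uniqueness (main obstacle).} We cannot cite well-posedness of the IDE, since the paper deduces it from this theorem. So uniqueness must be proved directly for the measure-valued equation. I will prove it for solutions of the form \eqref{eq:def-zeta}, via a Gronwall argument in $L^\infty(\Omega)^{S+1}$. Take two solutions $u,\tilde u$. Every term on the right-hand side is a bounded bilinear form in $(u,u)$, with coefficients bounded by $\|\tau\|_\infty$ and $\|c\|_\infty\|\mu\|_{L^1}$. It is therefore Lipschitz on the set $\{0\le u_i\le1\}$, which gives
\[\sup_x|u_i(t,x)-\tilde u_i(t,x)|\le C\int_0^t \max_j\sup_y|u_j(s,y)-\tilde u_j(s,y)|\,ds,\]
and hence $u=\tilde u$. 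The delicate point is to justify that any weak limit indeed has densities satisfying the equation \emph{for every} $x$, not merely a.e.; choosing measurable versions and working in $L^\infty$ suffices. Picard iteration, or equivalently a Banach fixed point on $C([0,T],L^\infty)$, also gives a direct existence argument consistent with the limit.

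\emph{Conclusion.} Uniqueness of the deterministic limit point gives convergence in law of $\eta^K$ to $\zeta$. Convergence in law to a deterministic continuous path is equivalent to convergence in probability in $\mathbb{D}$, and this gives the statement.
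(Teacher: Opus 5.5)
Your proposal is correct and uses the same tightness, identification and uniqueness scheme as the paper: the semimartingale decomposition with $\langle M^K(f)\rangle_t = O(t/K)$, compact containment for free since $E$ is compact, Aldous--Rebolledo on the projections, C-tightness from jumps of size $O(1/K)$, and conservation of the space marginal to get the density form.

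The real difference is the uniqueness step. The paper proves uniqueness for the measure-valued equation itself, among \emph{arbitrary} measure solutions, by a Gronwall argument in total variation. The nonlocal terms only integrate bounded functions against probability measures, so $|\langle \zeta_t-\overline{\zeta}_t, f\rangle| \le C\int_0^t \|\zeta_s-\overline{\zeta}_s\|_{TV}\,ds$ for all $\|f\|_\infty\le 1$. Densities appear only afterwards, so the paper never chooses versions or checks joint measurability in $(t,x)$, and well-posedness of the IDE follows as a corollary. You instead restrict to density-form solutions and run Gronwall in $L^\infty$ on the densities. That restriction is legitimate, since marginal conservation forces every limit point into that class. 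It costs you the bookkeeping with measurable versions, but it stays closer to the pointwise form of the IDE. Your Picard iteration can also give an everywhere-defined solution if you run it over bounded measurable functions with the true sup norm rather than $L^\infty$ classes. For global existence you would also need forward invariance of the simplex, which holds because $\partial_t u_i \ge 0$ when $u_i=0$ and $\partial_t u_0=\sum_{i\ge1}\tau_i u_i\ge 0$ when $u_0=0$. This addresses the ``for every $x$'' issue, which the paper's TV argument leaves implicit because it identifies densities only up to null sets.

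Your identification step is also slightly more careful than the paper's. You pass to the limit with test functions continuous in $x$, then recover $f(x,w)=g(x)\mathbf{1}_{\{w=i\}}$ by letting $g$ range over continuous functions. The paper instead asserts continuity of $\psi_{f,t}$ at $\zeta$ for merely measurable $f$, which weak convergence of the atomic measures $\eta^K_t$ does not provide.
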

The proof (given in Appendix \ref{app:preuve_graphon}) follows a tightness-uniqueness argument as in \cite{delmas_individual-based_2024} who studied a closely related model for a monomorphic population. The main difference of our proof and the one in \cite{delmas_individual-based_2024} lies in the fact that we deduce existence and uniqueness of the solution of \eqref{eq:def-zeta} from the convergence of the stochastic process whereas \cite{delmas_individual-based_2024} identifies the limit using the properties of the integro-differential system. 
%\paragraph{Proof of Theorem \ref{thm:lln}}

\subsection{Main results for the continuous space model} 
\label{sec:results}

Theorem \ref{thm:lln} establishes that Equation \eqref{eq:ide-system_dim2}:
\begin{equation*}
         \left\{ \begin{aligned} 
            &\partial_t u(t,x)  = - \tau(x) u(t,x) + (1-u(t,x)-v(t,x)) \int_{\Omega} u(t,y) c(x,y) dy\\
            &\partial_t v(t,x)  = - \sigma(x) v(t,x) + (1-u(t,x)-v(t,x)) \int_{\Omega} v(t,y) \gamma(x,y) dy
            \\
           & u(0,x)=u_0(x)\ge0, \quad v(0,x)=v_0(x)\ge0, \quad u_0+v_0 \le 1.
         \end{aligned}\right.  
\end{equation*}
emerges in the graphon limit of a stochastic patch occupancy model by considering a set of two species ($S = 2$), with uniform spatial distribution $\mu(x) = 1$ and the following choice of colonization kernel: 
    \begin{equation*}  
    \quad c_{i,j}(x,y) = 
    \begin{cases}
        0 & \text{if} \; i = 0 \text{ or } j > 0, \\
        c(x,y) & \text{if } i = 1 \text{ and } j = 0, \\
        \gamma(x,y) & \text{if } i = 2 \text{ and } j =0.
    \end{cases}
    \end{equation*}
This correspond to the case where populations can only invade an empty position, and the colonization rate is different for the two populations.
Theorem \ref{thm:lln} also ensures the existence of a unique solution to \eqref{eq:ide-system_dim2} satisfying $0\le u+v\le 1$.\\
We introduce an additional assumption which requires that the colonization kernels $c$ and $\gamma$ do not vanish. This assumption will be useful to ensure the existence of a positive equilibrium in the monospecific case $S = 1$ (see Theorem \ref{theo:mono}).
\begin{hyp}
\label{ass1}
In the following, we assume that 
\begin{itemize}
    \item The functions $\tau$ and $\sigma$ are continuous and positive on $\Omega$.
    \item The functions $c$ and $\gamma$ are continuous and positive on $\Omega\times \Omega$.
\end{itemize} 
\end{hyp}

%\manon{Il faudrait énoncer ici un résultat qui assure que si $u_0+v_0\le  1$ , les solutions sont bien définies et restent dans l'espace $$\Delta^2=\{ (f,g) \in C(\Omega,[0,1])^2, 0\le f+g\le 1\}.$$}
%\madeleine{C'est lié à la preuve de la limite graphon, le résumer ici ou dans la section précédente ?}

\paragraph{Monospecific case}
Let us first focus on the case where a single species lives in the environment $\Omega$. This case was previously studied by Delmas Dronnier and Zitt \cite{DDZ22} and we recall their main result.
\\
Consider a compact subset $\Omega \subset \R^2$,  as well as functions $p \in L^\infty(\Omega, [0,1])$, $\tau \in \mathcal{C}(\Omega, (0,+\infty))$, and $c \in \mathcal{C}(\Omega^2, (0, +\infty))$. Let $w_0 \in \mathcal{C}(\Omega, [0,1])$ and consider the following integro-differential equation: 
\begin{equation}
\label{eq:monospecific-ide}
% \begin{aligned}
\begin{cases}
    \partial_t w(t,x) &= -\tau(x) w(t,x) + (1 - p(x) - w(t,x)) \int_\Omega c(x,y)w(t,y)dy, \\
    \quad w(0,x) &= w_0(x).
\end{cases}
% \end{aligned}
\end{equation}
%\manon{Vérifier ici si on a besoin de la continuité de la condition initiale ou si l'hypothèse $L^\infty$ est suffisante. La continuité permet d'obtenir que le temps d'atteinte de l'espace $\Delta_p$ est uniformément borné.\\}
Note that in \cite{DDZ22}, this equation is used to model the spread of an epidemic in a vaccinated population (see Section 5 of their article). In that setting, $w$ represents the density of infected individuals, $\tau$ is the healing rate, $p$ the proportion of vaccinated individuals and $c$ the contact rate in the population.\\
In order to state their persistence-extinction result, let us introduce the first generation operator as the linear operator such that for any $g \in L^{\infty}(\Omega, \R_+)$:
\begin{equation}
    \label{eq:def_T}
    \mathcal{T}_{c/\tau} (g)(x) = \int_\Omega \frac{c(x,y)g(y)}{\tau(y)} dy,  
\end{equation} 
and a scaling operator
\begin{equation}
    \label{eq:def_S}
    \mathcal{S}_p (g)(x) = (1 - p(x))g(x).
\end{equation} 

% Expliquer ici que 
For any linear operator $A$ acting on $g \in L^{\infty}(\Omega, \R_+)$, we let $r(A)$ designate its spectral radius, i.e. 
\[ r(A) = \sup \{|\lambda| : \lambda \in \text{spectrum}(A) \}. \]
The asymptotic behaviour of the unique solution to Equation \eqref{eq:monospecific-ide} is entirely characterized by the following result. 

\begin{theorem}[\cite{DDZ22}, Theorem 1.5]  
\label{theo:mono}
Consider any initial condition $w_0 \in \mathcal{C}(\Omega, [0,1])$. Under Assumptions \ref{ass1}, the following assertions hold.
\begin{enumerate}[label=(\roman*)]
\item There exists a unique solution $w$ to \eqref{eq:ode-system}, well defined on $[0,\infty[$. %Furthermore if there exists $A\subset \Omega$ such that for all $x\in A$, $w_0(x)>p(x)$ then after a finite time $T_{A,w_0}$, the solution satisfies $w(t,x)\in[0,p(x)]$ for all $x\in\Omega$ and $t\ge T_{A,w_0}$.
\item  If $r(\mathcal{T}_{c/\tau} \circ \mathcal{S}_p) \leq 1$, then for any initial condition $w_0$, the solution $w_p$ to Equation \eqref{eq:monospecific-ide} converges uniformly to 0 as $t$ grows to infinity.
\item If $r(\mathcal{T}_{c/\tau} \circ \mathcal{S}_p) > 1$, there exists a unique equilibrium $\bar{w}_p$ different from $0$, and it is positive. Furthermore for any $w_0$ such that its integral is positive, the solution $w_p$ to Equation \eqref{eq:monospecific-ide} converges uniformly to $\bar{w}_p$ as $t$ grows to infinity. In addition, $0 < \bar{w}_p < 1 - p$ and if $p$ is continuous on $\Omega$, then $\bar{w}_p \in \mathcal{C}(\Omega, [0,1])$.
\end{enumerate}
\end{theorem}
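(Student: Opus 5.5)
We would prove Theorem \ref{theo:mono} (which is Theorem 1.5 of \cite{DDZ22}) through the monotone dynamical systems framework: cooperative structure, comparison principle, and Krein--Rutman theory for the positive compact operator $\mathcal{T}_{c/\tau}\circ\mathcal{S}_p$.

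\textbf{Well-posedness and monotonicity (i).} Write the right-hand side of \eqref{eq:monospecific-ide} as $F(w)(x)=-\tau(x)w(x)+(1-p(x)-w(x))\int_\Omega c(x,y)w(y)\,dy$. On $L^\infty(\Omega)$ this map is locally Lipschitz, so Cauchy--Lipschitz in a Banach space gives a unique local solution. We then show that the order interval $\Delta=\{0\le w\le 1\}$ is invariant. At points where $w=0$ the derivative is $\ge0$, and at points where $w=1$ it is $\le -\tau - p\int c w<0$. Invariance makes the solution global. Moreover, $\partial F(w)/\partial w(y)=(1-p-w)c\ge0$ off the diagonal, so the system is cooperative. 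Hence the comparison principle holds: $w_0\le \tilde w_0$ implies $w(t)\le\tilde w(t)$. Continuity of $w_0$ is preserved, since $F$ maps $\mathcal C(\Omega)$ to itself.

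\textbf{Extinction (ii).} Linearising at $0$ gives $L g=-\tau g+(1-p)\int c g$. Its spectral bound $s(L)$ has the same sign as $r(\mathcal{T}_{c/\tau}\circ\mathcal{S}_p)-1$; this follows from the standard identity relating $R_0$ and the growth bound, with $\tau$ bounded below. Since $F(w)\le Lw$, the solution is dominated by $e^{tL}w_0$.
\begin{itemize}
\item If $r<1$, then $s(L)<0$ and uniform exponential decay follows.
\item If $r=1$, linear domination alone is not enough. We would use the Krein--Rutman positive eigenvector $\phi$ of $\mathcal{S}_p\mathcal{T}_{c/\tau}$ (positive because $c>0$) and consider the weighted integral $\int \phi\, w/\tau$, or a comparison with the solution started from $w\equiv1$. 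That solution is nonincreasing in $t$, so it converges to an equilibrium $w^*$, and $F\le L$ forces $w^*\cdot\int cw^*=0$, i.e.\ $w^*=0$. Dini's theorem then upgrades this to uniform convergence, after checking equicontinuity, which follows from the continuity of $c$.
\end{itemize}

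\textbf{Persistence (iii).} If $r>1$, take the principal eigenfunction $\psi>0$ of $L$ with eigenvalue $s>0$. For small $\varepsilon$, $\varepsilon\psi$ is a strict subsolution, so the solution from $\varepsilon\psi$ is nondecreasing and converges to a positive equilibrium. The solution from $1$ is nonincreasing and converges to an equilibrium as well. Uniqueness of the nonzero equilibrium comes from a sublinearity argument. Given two positive equilibria $\bar w_1,\bar w_2$, let $\lambda^*=\sup\{\lambda:\lambda\bar w_1\le\bar w_2\}$. If $\lambda^*<1$, then strict concavity of $w\mapsto (1-p-w)\int cw$, combined with positivity of $c$, gives $F(\lambda^*\bar w_1)>\lambda^*F(\bar w_1)=0$, which contradicts the maximality of $\lambda^*$.

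Any $w_0$ with positive integral satisfies $w(t_0)\ge\varepsilon\psi$ at some $t_0>0$, because $\int c w>0$ pushes $w$ to be positive everywhere. Then $\varepsilon\psi\le w(t_0)\le 1$, and by comparison $w$ is squeezed between the two monotone trajectories, which share the same limit. Dini's theorem again gives uniform convergence. The bounds $0<\bar w<1-p$ follow directly from the equilibrium relation $\bar w=(1-p)\int c\bar w/(\tau+\int c\bar w)$, which also yields continuity of $\bar w$ when $p$ is continuous.

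\textbf{Main obstacle.} We expect the hardest steps to be the critical case $r=1$ and the link between the spectral radius of the next-generation operator and the sign of $s(L)$ when $p$ is only $L^\infty$ (so $1-p$ may vanish). There we would work with the positive compact operator $\mathcal{S}_p\mathcal{T}_{c/\tau}$ on $L^\infty$, using its Krein--Rutman eigenpair directly rather than the eigenpair of $L$.
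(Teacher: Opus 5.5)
The paper does not reprove this result. It invokes Theorem~1.5 of Delmas--Dronnier--Zitt for $p=0$, transfers it to general $p$ via $z=w/(1-p)$, and then only checks $0<\bar w_p<1-p$ and continuity from the equilibrium relation. Your formula $\bar w=(1-p)\int c\bar w/(\tau+\int c\bar w)$ is the correct one; the paper's display has a typo in the denominator. Rebuilding the monotone-systems argument is a legitimate alternative. Your well-posedness argument and the case $r<1$ are fine: domination by $e^{tL}w_0$ only uses $w\ge0$ and positivity of the semigroup. The comparison step, however, has a genuine gap.

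The off-diagonal derivative is $(1-p(x)-w(x))\,c(x,y)$, which is nonnegative only where $w(x)\le 1-p(x)$. So the system is cooperative on $\Delta_p=\{0\le w\le 1-p\}$, not on $\{0\le w\le 1\}$, and the comparison principle you state fails for data exceeding $1-p$. For a concrete counterexample, take $p\equiv p_0\in(0,1)$, $\tau\equiv\tau_0$, $c\equiv c_0$, and let $W$ be the solution from $W_0\equiv1$. Let $w_0\le 1$ be continuous with $w_0(x_0)=1$ and $\int_\Omega w_0<1$. Then $\partial_t(w-W)(0,x_0)=p_0c_0\bigl(1-\int_\Omega w_0\bigr)>0$, so $w(t,x_0)>W(t,x_0)$ for small $t>0$.

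The solution from $1$ is still nonincreasing in time, but it is not an upper barrier. Hence the upper half of the squeeze in (iii) does not go through, and neither does your comparison route for $r=1$ in (ii). The lower barrier from $\varepsilon\psi$ is fine, since it stays in $\Delta_p$.

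The fix is to use the supersolution $1-p$ (indeed $F(1-p)=-\tau(1-p)\le 0$), run all comparisons inside $\Delta_p$, and handle data above $1-p$ separately. On $\{w>1-p\}$ one has $\partial_t w\le -\tau w$, and $\{w\le 1-p\}$ is pointwise forward invariant; this is the mechanism of Lemma~\ref{lemma:con_ini}. Absorption need not be uniform in $x$, so it is safest to argue as follows. For large $t_0$ one has $w(t_0)\le 1-(p-\delta)_+$. Compare with the cooperative equation for $(p-\delta)_+$, then let $\delta\to 0$.

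For $r=1$, the Lyapunov route you mention avoids cooperativity altogether, but it needs a different eigenvector. Let $\phi^*>0$ solve $\tau\phi^*=\int_\Omega \phi^*(x)(1-p(x))c(x,\cdot)\,dx$; this is an eigenvector of the \emph{transpose} of $\mathcal{S}_p\mathcal{T}_{c/\tau}$, not of $\mathcal{S}_p\mathcal{T}_{c/\tau}$ itself. Then $\frac{d}{dt}\int_\Omega \phi^* w=-\int_\Omega \phi^*(x)\,w(t,x)\int_\Omega c(x,y)w(t,y)\,dy\,dx$. It follows that $\int_\Omega w\to 0$, and uniform convergence comes from $\partial_t w\le -\tau w+\int_\Omega c(\cdot,y)w(t,y)\,dy$.

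Two further points come from $p$ being only $L^\infty$.
\begin{itemize}
\item $F$ does not preserve $\mathcal{C}(\Omega)$, so $w(t,\cdot)$ and $\bar w$ need not be continuous, and Dini cannot be applied to $w$. Apply it instead to $I(t,x)=\int_\Omega c(x,y)w(t,y)\,dy$ along the monotone barrier trajectories, and conclude from $\partial_t w=-(\tau+I)w+(1-p)I$.
\item Your uniqueness argument needs $\lambda^*$ to be attained at a point where the equilibria are positive. Without continuity this can fail, and in any case $\bar w_i=0$ where $p=1$. Run the argument instead on the continuous positive functions $I_i=\int_\Omega c\,\bar w_i$, with $\lambda^*=\min_\Omega I_2/I_1$. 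These are fixed points of the monotone, strictly sublinear map $G(I)=\int_\Omega c(\cdot,y)\frac{(1-p(y))I(y)}{\tau(y)+I(y)}\,dy$.
\end{itemize}
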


\begin{proof}
Note that Theorem 1.5 in \cite{DDZ22} handles the case where $p=0$ and Section 5 makes the connection when $p\neq 0$. Indeed, let us remark that if $w_p$ is a solution to $\eqref{eq:ide-system_dim2}$, then $z(t,x)=\frac{w_p(t,x)}{1-p(x)}$ is a solution to the same system for $p=0$. %This correspondence allows to obtain the theorem.
\\
%The only remaining item concerns the case where the initial condition $w_0$ may be larger than $1-p$. This case is handled in Lemma \ref{lemma:con_ini} whose proof can be found in Section \ref{sec:proofs}. \\
%\madeleine{Preuve des prop ajoutées :}
Let us now focus on the stated properties of $\bar{w}_p$ in $iii)$. First, recall that $\bar{w}_p$ satisfies, for all $x \in \Omega$:
\[0 = -\tau(x) \bar{w}_p(x) + (1 - \bar{w}_p(x)) \int_\Omega c(x,y) \bar{w}_p(y) dy.  \]
Thus $0 < \bar{w}_p(x) < 1 - p(x)$ for any $x \in \Omega$, as the right-hand size is negative for $\bar{w}_p (x) = 1 - p(x)$. Similarly, the right-hand side is positive if $\bar{w}_p(x) = 0$ since $c > 0$ and Theorem 1.5 (iii) in \cite{DDZ22} shows that $\bar{w}_p$ has nonzero integral.

Regarding the continuity of $\bar{w}_p$, notice that it follows from Equation \eqref{eq:monospecific-ide} that for any $x \in \Omega$,
\[\bar{w}_p(x) = \frac{(1 - p(x)) \int_{\Omega}c(x,y) \bar{w}_p(y) dy}{(1 - p(x)) \int_{\Omega}c(x,y) \bar{w}_p(y) dy - \tau(x)}, \]
As the applications $p$, $\tau$ and $c$ are continuous by assumption, this concludes. %\madeleine{cf Rk 2.16 DDZ}
%Consider $A=\{x\in\Omega, w_0(x)> p(x)\}$, then as long as $w(t,x)\ge p(x)$ we have 
%$$\partial_t w(t,x)\le -\tau(x) w(t,x)$$
%and thus $$w(t,x)\le w_0(x) \exp(-\tau(x) t).$$
%As a consequence, we obtain that $w(t,x)\le p(x)$ for $t\le \frac{1}{\tau(x)}\ln(w_0(x)/p(x))=T(x)$. Note that $T$ is continuous over $A$ and thus bounded. Combining this with Proposition 2.7 in\cite{DDZ22} stating that $\{f, 0\le f\le p\}$ is invariant for the dynamics, we obtain $(i)$.
\end{proof}
\paragraph{Metacommunity model}

In order to simplify the notations, we introduce the first generation operators for populations $u$ and $v$,
for any $g \in L^{\infty}(\Omega, \R_+)$:
\begin{equation}
    \label{eq:operatorT}
\mathcal{T}_{u} (g)(x) = \int_\Omega \frac{c(x,y)g(y)}{\tau(y)} dy,\qquad \mathcal{T}_{v} (g)(x) = \int_\Omega \frac{\gamma(x,y)g(y)}{\sigma(y)} dy, \end{equation}

Our first result concerns the extinction of both population $u$ and $v$.
\begin{theorem}[\textbf{Complete extinction}]
\label{theo:extin}
Assume $r(\mathcal T_u)\le1$ and $r(\mathcal T_v)\le1$, then for any initial conditions $u_0,v_0$ with $u_0+v_0\le 1$, we have 
$$\lim_{t\to\infty} \| u(t,\cdot) \|_\infty = \lim_{t\to\infty} \| v(t,\cdot)\|_\infty = 0.$$
%Furthermore, the convergence is exponentially fast \manon{préciser}.
\end{theorem}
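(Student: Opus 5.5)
The plan is a comparison argument that reduces each species to a monospecific equation, to which Theorem \ref{theo:mono} (with $p=0$) applies.

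\emph{Comparison step.} Since $v\ge 0$ along the solution (Theorem \ref{thm:lln} guarantees $u,v\ge0$ and $u+v\le1$), we have $1-u-v\le 1-u$. Together with $u\ge0$ and $c\ge 0$ this gives
\[\partial_t u(t,x)\le -\tau(x)u(t,x)+(1-u(t,x))\int_\Omega c(x,y)u(t,y)\,dy .\]
So $u$ is a subsolution of the monospecific equation \eqref{eq:monospecific-ide} with $p=0$ and kernel $c$, extinction rate $\tau$. Let $w$ be the solution of that equation started from $w(0,\cdot)=u_0$. I would show $u(t,\cdot)\le w(t,\cdot)$ for all $t$ by a comparison principle for the cooperative integro-differential operator. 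Set $z=u-w$. Then
\[\partial_t z\le -\Big(\tau+\int_\Omega c(\cdot,y)w(t,y)\,dy\Big)z+(1-u)\int_\Omega c(\cdot,y)z(t,y)\,dy .\]
The coefficient $(1-u)\int c\,z$ is monotone in $z$ because $1-u\ge0$ and $c\ge0$. A Gronwall estimate on $\|z_+(t)\|_\infty$ then gives $z_+\equiv 0$:
\[\partial_t z_+\le C\|z_+\|_\infty \quad\text{where } z>0,\]
and $z_+(0)=0$. The same argument applies to $v$ with kernel $\gamma$, extinction rate $\sigma$, and comparison solution $\tilde w$.

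\emph{Conclusion.} With $p=0$, $\mathcal S_0$ is the identity, so $r(\mathcal T_{c/\tau}\circ\mathcal S_0)=r(\mathcal T_u)\le1$. Theorem \ref{theo:mono}(ii) then gives $\|w(t)\|_\infty\to0$, hence $\|u(t)\|_\infty\to0$; likewise $\|v(t)\|_\infty\to 0$.

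\emph{Main obstacle.} The main technical issue is regularity. Theorem \ref{theo:mono} is stated for continuous initial data, whereas $u_0$ may only be bounded measurable. To handle this, I would dominate $u_0$ by the constant initial condition $w_0\equiv1$, which is continuous. Running the comparison with $w(0)=1\ge u_0$ uses the same Gronwall argument, now with $z_+(0)=0$ because $u_0\le 1$. This gives $0\le u\le w$, and the limit of $w$ is uniform by Theorem \ref{theo:mono}. One must also justify differentiating $z_+$ pointwise in $x$; since the equations are ODEs in $t$ for each fixed $x$ with a nonlocal bounded term, this can be done $x$ by $x$ using the bound $\int c\,z\le \|c\|_\infty\|z_+\|_\infty$.
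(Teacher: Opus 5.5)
Your proof is correct and follows the paper's route: it uses the comparison Lemma~\ref{lem:comparison} with $\delta=0$ to get $0\le u\le\phi_u(t,u_0)$ and $0\le v\le\phi_v(t,v_0)$, then applies Theorem~\ref{theo:mono}(ii) with $p=0$. Your Gronwall argument on $z_+$ supplies the comparison principle the paper only justifies by ``integrating the inequality'', and dominating $u_0$ by the constant datum $1$ avoids the continuity requirement on initial data in Theorem~\ref{theo:mono}.
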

%\manon{En regardant dans DDZ, les vitesses de convergences exponentielles sont liées à $s(\mathcal T - \tau)$ et ne sont pas valables dans le cas critique. Du coup, je ne sais pas si on précise vraiment.}

The second result states that if a single population is able to survive, then as expected, it will converge to the same equilibrium as in the monospecific setting.
\begin{theorem}[\textbf{Single species extinction}]
\label{thm:single-extinct}
Assume $r(\mathcal T_u)\leq1$ and $r(\mathcal T_v) >1$. For any initial conditions $u_0,v_0$ with $u_0+v_0\le 1$, it holds that 
\begin{equation*}
    \lim_{t \to \infty} \| u(t, \cdot)\|_\infty = 0
\end{equation*}
and 
\begin{equation*}
    \forall x \in \Omega, \quad \lim_{t \to \infty} v(t,x) = \bar{v}(x).
\end{equation*}
%\madeleine{+ vitesse de convergence exponentielle vers 0 pour $u$ (à compléter).}
\end{theorem}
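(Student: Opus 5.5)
The argument has two steps. First I show that $u$ vanishes, using comparison with the monospecific equation. Then I use that $u$ is asymptotically negligible to sandwich $v$ between solutions of monospecific equations with small perturbations $p$, and pass to the limit using the continuity of the monospecific equilibrium in $p$.

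\textbf{Step 1: extinction of $u$.} Since $v\ge 0$, the function $u$ satisfies
\[
\partial_t u \le -\tau u + (1-u)\int_\Omega c(\cdot,y)u(t,y)\,dy .
\]
I will use a comparison principle for the cooperative (monotone) monospecific semiflow, as in \cite{DDZ22}. This gives $u(t,\cdot)\le w(t,\cdot)$, where $w$ solves \eqref{eq:monospecific-ide} with $p=0$ and $w_0=u_0$. The comparison principle for a subsolution of an integral equation with positive kernel is proved via Gronwall applied to $(u-w)_+$. Because $r(\mathcal T_u)\le 1$, Theorem \ref{theo:mono}(ii) yields $\|w(t)\|_\infty\to 0$, and hence $\|u(t)\|_\infty\to0$.

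\textbf{Step 2: convergence of $v$.} The upper bound is obtained in the same way. Here $v$ is a subsolution of \eqref{eq:monospecific-ide} with $p=0$ and kernel $\gamma$, so $\limsup_t v(t,x)\le \bar v(x)$ uniformly in $x$, since $\int v_0>0$ is not needed for an upper bound.

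For the lower bound, fix $\varepsilon>0$ and choose $T$ such that $u(t,x)\le\varepsilon$ for all $t\ge T$. On $[T,\infty)$,
\[
\partial_t v \ge -\sigma v + (1-\varepsilon-v)\int_\Omega\gamma(\cdot,y)v(t,y)\,dy ,
\]
so $v$ is a supersolution of the monospecific equation with constant $p=\varepsilon$. Continuity of the spectral radius gives $r(\mathcal T_v\circ\mathcal S_\varepsilon)=(1-\varepsilon)r(\mathcal T_v)>1$ for $\varepsilon$ small. Theorem \ref{theo:mono}(iii) then gives $\liminf_t v(t,x)\ge \bar w_\varepsilon(x)$, provided $\int v(T,\cdot)>0$.

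I will check positivity of $v(T)$ as follows. If $\int v_0>0$, then positivity of $\gamma$ forces $v(t,x)>0$ for all $t>0$, since the gain term is positive while $1-u-v$ stays away from $0$. This last fact needs a small argument: $u+v\le1$, and at $u+v=1$ the derivative is $-\tau u-\sigma v<0$. If $\int v_0=0$, the statement must be understood with $\bar v$ replaced by $0$, or the theorem implicitly assumes $v_0\neq0$, and I would note this.

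\textbf{Step 3: the limit $\varepsilon\to0$.} It remains to show $\bar w_\varepsilon\to\bar v=\bar w_0$. Since $\bar w_\varepsilon$ is monotone in $\varepsilon$ and bounded, it converges pointwise to a limit $w^*$ as $\varepsilon\to0$. By dominated convergence, $w^*$ is an equilibrium for $p=0$. It is nonzero because $\bar w_\varepsilon$ increases as $\varepsilon$ decreases. Uniqueness in Theorem \ref{theo:mono}(iii) then gives $w^*=\bar v$.

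Monotonicity of $\bar w_\varepsilon$ in $\varepsilon$ itself follows from comparison, since the $\varepsilon$-equilibrium is a subsolution for smaller $\varepsilon$. Combining the upper and lower bounds gives the pointwise convergence $v(t,x)\to\bar v(x)$. I expect the main obstacle to be the comparison principle for the time-shifted supersolution, together with this continuity of the equilibrium in $p$.
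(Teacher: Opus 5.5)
Your proposal is correct and follows essentially the same route as the paper: extinction of $u$ from the domination $u\le\phi_u(t,u_0)$, then the sandwich $\phi_{v,\varepsilon}(t-T,v(T,\cdot))\le v(t,\cdot)\le\phi_v(t,v_0)$ once $\|u(t,\cdot)\|_\infty\le\varepsilon$, and finally identification of the monotone limit of $\bar w_\varepsilon$ as $\bar v$, by passing to the limit in the equilibrium equation and using uniqueness in Theorem~\ref{theo:mono}(iii). Your caveat that $\int_\Omega v_0>0$ is needed (otherwise $v\equiv 0$) is accurate: the paper uses this assumption only implicitly, when it appeals to Step~3 of the proof of Theorem~\ref{theo:persist}. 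For the lower bound you only need $\int_\Omega v(T,\cdot)>0$, and this already follows from $\partial_t v\ge-\sigma v$, so the argument about $1-u-v$ staying away from $0$ can be dropped.
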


% \madeleine{Est-ce qu'on peut avoir de la convergence uniforme de $v$ vers $\bar{v}$ ?}

The most interesting result concerns persistence.
From Theorem \ref{theo:mono}, we have that if $r(\mathcal T_u)>1$, there exists a unique positive equilibrium for the population $u$ when $v_0=0$ that we denote by $\bar{u}$. Similarly, we denote by $\bar{v}$ the positive equilibrium for population $v$ when $u_0=0$. Our next result states that if population $u$ can invade $0$ and $\bar v$ and population $v$ can invade $0$ and $\bar u$, then both populations are uniformly persistent (see \cite{smith_dynamical_2011} for more information on persistence theory). 

\begin{theorem}[\textbf{Uniform strong persistence}]
\label{theo:persist}
Assume $r(\mathcal T_u)>1$ and $r(\mathcal T_v)>1$. If further $r(\mathcal T_u\circ S_{\bar v})>1$ and $r(\mathcal T_v\circ S_{\bar u}) >1$, then there exists $\epsilon>0$ such that for any initial conditions $u_0,v_0$ with $u_0+v_0\le 1$, 
$$\liminf_t u>\epsilon>0, \quad \liminf_t v>\epsilon>0.$$
\end{theorem}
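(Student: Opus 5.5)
We will apply abstract uniform persistence theory for semiflows, in the form of the Hale–Waltman / Smith–Thieme acyclicity theorem \cite{smith_dynamical_2011}. The state space is
\[X=\{(f,g)\in \mathcal{B}_b(\Omega,[0,1])^2 : f+g\le 1\}.\]
Its boundary subset is $\partial X_0=\{(f,g)\in X: \int f=0 \text{ or } \int g=0\}$, and $X_0$ is its complement. The persistence function is $\rho(f,g)=\min(\int_\Omega f,\int_\Omega g)$, or an $L^\infty$ infimum version once positivity is established.

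\textbf{Step 1: dissipativity, compactness and forward invariance.} We show that the semiflow $\Phi_t$ generated by \eqref{eq:ide-system_dim2} is point dissipative and asymptotically smooth. Dissipativity is trivial since $X$ is bounded. For asymptotic smoothness we use the variation-of-constants form
\[u(t,x)=u_0(x)e^{-\int_0^t(\tau(x)+\int c u)ds}+\int_0^t e^{-\dots}\int_\Omega c(x,y)u(s,y)dy\,ds.\]
The first term decays uniformly because $\tau\ge\min\tau>0$. The second term is equicontinuous in $x$ by continuity of $c$, $\tau$ on the compact set $\Omega$. So $\Phi_t$ is a compact perturbation of a uniform contraction, hence $\alpha$-contracting, and it has a compact global attractor in $C(\Omega)^2$.

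Next, $X_0$ is forward invariant. If $\int u_0>0$, then $\int c(x,y)u_0(y)dy>0$ for every $x$, since $c>0$. Moreover $1-u-v>0$ becomes true at positive times, because $\partial_t(u+v)\le 0$ where $u+v=1$ and $\tau,\sigma>0$. Hence $u(t,\cdot)>0$ everywhere for $t>0$, and likewise for $v$.

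\textbf{Step 2: boundary dynamics.} On $\partial X_0$ one species is identically zero for all times, and the other follows \eqref{eq:monospecific-ide} with $p=0$. By Theorem \ref{theo:mono}, every boundary trajectory converges to one of three equilibria: $E_0=(0,0)$, $E_u=(\bar u,0)$ or $E_v=(0,\bar v)$. The flow is $(0,0)\leftarrow$ both others, so the family $\{E_0\},\{E_u\},\{E_v\}$ is isolated and acyclic. No chain $E_u\to E_v\to E_u$ exists on the boundary, because the boundary consists of the two separate monospecific invariant subsets.

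\textbf{Step 3: the main obstacle, weak repulsion $W^s(E_\ast)\cap X_0=\emptyset$.} This is where the spectral hypotheses are used. Suppose a trajectory in $X_0$ stays within $\delta$ of $E_u=(\bar u,0)$ for all $t\ge T$. Then
\[\partial_t v\ge -\sigma v+(1-\bar u-2\delta)\int\gamma v.\]
The linear operator $g\mapsto -\sigma g+\mathcal S_{\bar u+2\delta}\int\gamma g$ has a positive principal eigenvalue (its spectral bound) exactly when $r(\mathcal T_v\circ S_{\bar u+2\delta})>1$. This follows from the standard equivalence $s(A-\sigma)>0\iff r(\mathcal{T}\circ\mathcal S)>1$ for positive compact kernel operators, as in \cite{DDZ22}, using the commutation $r(\mathcal T_v S)=r(S\mathcal T_v)$. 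By continuity of the spectral radius, the condition holds for $\delta$ small.

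Since $c,\gamma>0$, the principal eigenfunction $\phi$ is positive and continuous. By comparison for the cooperative linear system, $v(t)\ge \epsilon e^{\lambda(t-T)}\phi$ with $\lambda>0$, contradicting $v$ staying small. The same argument handles $E_v$, using $r(\mathcal T_u\circ S_{\bar v})>1$. It also handles $E_0$, using $r(\mathcal T_u)>1$: near $E_0$, $u$ grows.

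The delicate point is the comparison argument, because the full system is not monotone. However, only the scalar linear lower bound for $v$ is needed, given the two-sided estimate on $u$ near the equilibrium, so a monotone comparison applies to the linear inequality.

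\textbf{Step 4: conclusion.} Applying the acyclicity theorem gives uniform weak, then strong, persistence with respect to $\rho$: $\liminf \int u,\int v\ge\eta$. To upgrade to pointwise bounds, we first note that $\int c(x,y)u\,dy\ge \min c\cdot\eta$ for large $t$. We also need $1-u-v$ to stay bounded below. Indeed $\partial_t(u+v)\le -\min(\tau,\sigma)(u+v)+(1-u-v)C$, so $\liminf (1-u-v)\ge \kappa>0$. Therefore
\[\partial_t u\ge -\|\tau\|_\infty u+\kappa\,\eta\min c,\]
which gives $\liminf_t\inf_x u\ge \kappa\eta\min c/\|\tau\|_\infty=:\epsilon$, and similarly for $v$.
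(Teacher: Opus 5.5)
Your proposal is correct in outline, but it follows a genuinely different and much heavier route than the paper.

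\textbf{The paper's argument.} The paper never looks at neighbourhoods of the boundary equilibria. Since $v\ge 0$, Lemma~\ref{lem:comparison} gives $u(t,\cdot)\le\phi_u(t,u_0)$ along \emph{every} trajectory. Moreover $\phi_u(t,u_0)$ converges uniformly to $\bar u$ (or to $0$). Hence for each $\alpha>0$ there is $T_\alpha$ with $u\le(1+\alpha)\bar u$ on $\Omega$ for all $t\ge T_\alpha$. Choosing $\alpha$ small via Lemma~\ref{lem:r-cont} gives $r(\mathcal T_v\circ\mathcal S_{(1+\alpha)\bar u})>1$. Lemma~\ref{lem:comparison} then yields $v(t)\ge\phi_{v,(1+\alpha)\bar u}(t-T_\alpha,v(T_\alpha))$. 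Theorem~\ref{theo:mono}(iii) sends this lower bound to a continuous positive equilibrium $\tilde v$ that does not depend on the initial datum, so $\epsilon=\min_\Omega\tilde v$ works. The argument for $u$ is symmetric.

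\textbf{Comparison.} The one-sided control you exploit only locally near $E_u$ in Step~3 is in fact available globally. That makes your Steps~1, 2 and 4 unnecessary: no asymptotic smoothness, no isolation or acyclicity, no repulsion of $E_0$, and no weak-to-strong upgrade. The paper's route also gives an explicit $\epsilon$ and decouples the two species. Persistence of $v$ uses only $r(\mathcal T_v\circ\mathcal S_{\bar u})>1$, which is exactly what the paper later invokes to get persistence of $u$ in the non-mutual-invasion discussion. Your persistence function $\rho=\min(\int u,\int v)$ instead ties each species to both invasion conditions. What your route buys is robustness: it does not need a global monotone upper bound on the competitor. Your linear comparison near $E_u$ also sidesteps comparing with a nonlinear monospecific flow started possibly outside $\Delta_{(1+\alpha)\bar u}$.

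\textbf{Repairs needed.} None of these is fatal.
\begin{itemize}
\item In Step~1 your variation-of-constants formula drops the factor $(1-v(s,x))$. With that factor, the second term is not equicontinuous for merely measurable data. Instead, view $(u,v)(\cdot,x)$ as solving a linear planar ODE forced by the equicontinuous fields $a=\int_\Omega c(x,y)u(t,y)\,dy$ and $b=\int_\Omega\gamma(x,y)v(t,y)\,dy$. The homogeneous matrix $\begin{pmatrix}-\tau-a&-a\\-b&-\sigma-b\end{pmatrix}$ has $\ell^\infty$ logarithmic norm $-\min(\tau(x),\sigma(x))<0$, which gives the contraction-plus-compact splitting.
\item In $\mathcal B_b$ with the sup norm, $\int g=0$ only gives $g=0$ a.e. So on $\partial X_0$ the vanishing species is not identically zero. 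Either work with $L^\infty$ classes, or note that the null-set part decays uniformly.
\item The positive principal eigenfunction of $g\mapsto-\sigma g+(1-\bar u-2\delta)\int_\Omega\gamma(\cdot,y)g(y)\,dy$ with nonconstant $\sigma$ deserves a line. It exists because the spectral bound is positive, hence lies above the essential spectrum $\{-\sigma(x):x\in\Omega\}$.
\end{itemize}
Your restriction to $X_0$ is genuinely needed: the statement tacitly requires $\int u_0>0$ and $\int v_0>0$.
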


Finally, we also prove that the system admits a bistable case, in which the system admits two locally stable equilibria.
\begin{prop}
\label{prop:bistable}
Assume  $r(\mathcal T_u)>1$, $r(\mathcal T_v)>1$ but $r(\mathcal T_u\circ\mathcal S_{\bar v}) \le 1$ and  $r(\mathcal T_v\circ\mathcal S_{\bar u}) \le 1$ then the equilibria $(\bar u,0)$ and $(0,\bar v)$ are locally stable. More precisely, there exists $\delta>0$ and $\eta>0$ such that for any initial condition $(u_0,v_0)$ satisfying $|\bar u - u_0| \le \delta \bar u$ and $|v_0|\le \eta$, then the solution $(u,v)$ converges toward $(\bar u ,0)$.\\ The converse case for the convergence to $(0,\bar v)$ is similar with modified values of $\delta$ and $\eta$.
\end{prop}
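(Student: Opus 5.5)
The plan is to exploit the monotone (competitive) structure of the system: $u$ and $v$ each cooperate with themselves through colonization and compete with each other only through the shared factor $1-u-v$. Concretely, for $(u,v)$ solving \eqref{eq:ide-system_dim2}:
\begin{itemize}
\item if $v\le \eta$ on $[0,T]$, then $u$ is a supersolution of the monospecific equation \eqref{eq:monospecific-ide} with $p=\eta$;
\item if $u\ge (1-\delta)\bar u$ on $[0,T]$, then $v$ is a subsolution of the monospecific equation with $p=(1-\delta)\bar u$.
\end{itemize}
Both comparisons hold in the sense of the order-preserving semiflow of \eqref{eq:monospecific-ide} used in \cite{DDZ22}. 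The argument is therefore a trapping-region argument around $(\bar u,0)$ using these comparisons together with Theorem \ref{theo:mono}.

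\textbf{Step 1 (the invading species decays).} Fix $\delta>0$ small and let $p_\delta=(1-\delta)\bar u$. As long as $u\ge p_\delta$, $v$ is bounded above by the solution $\tilde v$ of \eqref{eq:monospecific-ide} with kernel $\gamma$, rate $\sigma$, $p=p_\delta$ and initial datum $v_0$. We need $r(\mathcal T_v\circ\mathcal S_{p_\delta})<1$ in order to apply Theorem \ref{theo:mono}(ii), and ideally to get exponential decay. The hypothesis only gives $r(\mathcal T_v\circ\mathcal S_{\bar u})\le1$, and
\[
\mathcal S_{p_\delta}=\mathcal S_{\bar u}+\delta\,\bar u\,\cdot,
\]
so the spectral radius increases slightly and may exceed $1$. \textbf{This is the main obstacle.} In the critical and near-critical case we cannot get decay for free.

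To handle it, I would first treat the strict case $r(\mathcal T_v\circ\mathcal S_{\bar u})<1$. By continuity of the spectral radius for these compact positive operators (the kernels are continuous), $r(\mathcal T_v\circ\mathcal S_{p_\delta})<1$ for $\delta$ small. This yields exponential decay: $\tilde v\le C\eta e^{-\kappa t}$, with a Perron eigenfunction giving an explicit supersolution $C\eta\phi e^{-\kappa t}$ of the linearized equation. For the critical case I would either:
\begin{itemize}
\item replace $\delta$ by a time-dependent margin, using the fact that $v$ itself lowers $u$ only by $O(\eta)$; or
\item accept convergence without a rate, using that $\tilde v$ with $p=\bar u$ tends to $0$ by Theorem \ref{theo:mono}(ii), and track $u$ through a moving bound.
\end{itemize}
I expect the paper's precise statement may implicitly rely on the strict case or a careful limiting argument.

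\textbf{Step 2 (the resident stays near $\bar u$).} While $v\le \eta'(t)$ with $\eta'$ integrable (or small), $u$ is bounded below by the solution $\underline u$ of the monospecific equation with $p=\eta'$. Since $r(\mathcal T_u)>1$, for $\eta$ small we still have $r(\mathcal T_u\circ\mathcal S_\eta)>1$, so $\underline u$ converges to a positive equilibrium $\bar w_\eta$. Moreover $\bar w_\eta\to\bar u$ uniformly as $\eta\to0$, by continuity of the positive equilibrium in $p$ (a uniqueness-plus-compactness argument on the fixed-point formula in the proof of Theorem \ref{theo:mono}). The upper bound $u\le$ the monospecific solution with $p=0$ is immediate. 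Starting from $|u_0-\bar u|\le\delta\bar u$, I would show that $u$ never leaves $\{u\ge(1-2\delta)\bar u\}$. The lower solution starts above $(1-\delta)\bar u$, and since $(1-\delta)\bar u$ is a subsolution of the $p=\eta$ equation for $\eta$ small relative to $\delta$ (a direct computation using $\bar u$'s equation and $\bar u>0$ on the compact $\Omega$), the monotone lower solution stays above it.

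\textbf{Step 3 (closing the loop and concluding).} A continuity/bootstrap argument on the maximal time $T$ where both bounds hold ($u\ge p_\delta$ and $v\le\eta$) shows that $T=\infty$, because each bound is maintained strictly by the other. Then $v\to0$ uniformly. Given $\epsilon>0$, after some time we have $v\le\epsilon$, so
\[
\bar w_\epsilon\ \le\ \liminf u\ \le\ \limsup u\ \le\ \bar u .
\]
Letting $\epsilon\to0$ gives $u\to\bar u$. The case of $(0,\bar v)$ follows by exchanging roles.
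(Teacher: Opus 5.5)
\textbf{The gap: the critical case cannot be closed.} Neither a time-dependent margin nor a moving bound on $u$ can work, because the convergence claimed in the statement is false when $r(\mathcal T_v\circ\mathcal S_{\bar u})=1$.

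Take two identical species on $\Omega=[0,1]^2$, with $\tau=\sigma\equiv1$ and $c=\gamma\equiv2$. Then $\bar u=\bar v\equiv1/2$, $r(\mathcal T_u)=r(\mathcal T_v)=2$ and $r(\mathcal T_u\circ\mathcal S_{\bar v})=r(\mathcal T_v\circ\mathcal S_{\bar u})=1$, so all hypotheses hold. Let $u_0\equiv1/2$ and $v_0\equiv\eta'$ with $0<\eta'\le\min(\eta,1/2)$. By uniqueness the solution is spatially homogeneous and solves
\[
u'=u(1-2u-2v),\qquad v'=v(1-2u-2v).
\]
Hence $u/v$ is conserved while $u+v\to1/2$, so $v(t)\to\eta'/(1+2\eta')>0$, whatever $\delta,\eta>0$ are. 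More generally, in any homogeneous landscape the hypotheses force $\tau\gamma=\sigma c$, and the whole segment $\{u+v=1-\tau/c\}$ then consists of equilibria.

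So your first guess is correct: the paper's proof silently uses the strict case. It invokes Lemma \ref{lem:r-cont} to get $r(\mathcal T_v\circ\mathcal S_{(1-\delta)\bar u})<1$ for small $\delta$. But Lemma \ref{lem:r-comparison}(a) gives $r(\mathcal T_v\circ\mathcal S_{(1-\delta)\bar u})\ge r(\mathcal T_v\circ\mathcal S_{\bar u})$, which rules this out at criticality. The right hypothesis for local stability of $(\bar u,0)$ is $r(\mathcal T_u)>1$ together with $r(\mathcal T_v\circ\mathcal S_{\bar u})<1$; the condition on $r(\mathcal T_u\circ\mathcal S_{\bar v})$ plays no role there. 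The case of $(0,\bar v)$ is symmetric.

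\textbf{The strict case.} Under that hypothesis your argument is complete and follows the paper's route: dominate $v$ by $\phi_{v,(1-\delta)\bar u}$ while $u\ge(1-\delta)\bar u$, minorize $u$ by $\phi_{u,\eta}$ while $v$ is small, bootstrap, then squeeze. It is more careful than the written proof on several points.

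First, the paper claims $t\mapsto\phi_{v,(1-\delta)\bar u}(t,v_0)$ is decreasing, hence $v\le v_0\le\eta$. This fails for general $v_0$, and your exponential supersolution is the right substitute. Build it from the eigenfunction of the compact operator $\mathcal S_p\circ\mathcal T_v$ (with $p$ the level used to dominate $v$), not of the linearization. Being a compact perturbation of a multiplication operator, the linearization need not have a principal eigenfunction. If $(1-p)\int_\Omega\gamma(\cdot,y)h(y)\sigma(y)^{-1}\,dy=rh$ with $h>0$ and $r<1$, then $\psi=h/\sigma$ satisfies $-\sigma\psi+(1-p)\int_\Omega\gamma(\cdot,y)\psi(y)\,dy\le-(1-r)\min\sigma\,\psi$. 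This only yields $v\le C\eta$, so $\eta$ must be fixed after $\delta$, with $C\eta\le\delta\min\bar u$.

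Second, the paper says $\phi_{u,\eta}$ increases to $\bar u$, whereas its limit $\bar w_\eta$ lies strictly below $\bar u$. What actually keeps $u$ above $(1-\delta)\bar u$ is the computation you sketch. With $F_\eta(w)=-\tau w+(1-\eta-w)\int_\Omega c(\cdot,y)w(y)\,dy$, it reads $F_\eta((1-\delta)\bar u)=(1-\delta)(\delta\bar u-\eta)\int_\Omega c(\cdot,y)\bar u(y)\,dy\ge0$, applied at level $C\eta$.

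Third, the final squeeze needs $\bar w_\epsilon\to\bar u$, as in the proof of Theorem \ref{thm:single-extinct}.

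Finally, for the bootstrap to close with a strict margin, dominate $v$ with $p=(1-2\delta)\bar u$ while propagating $u\ge(1-\delta)\bar u$.
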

This bistability would lead from an ecological perspective to \textit{priority effects}, so that the assembly of the metacommunity depends on which species $u$ or $v$ comes first. The importance of such priority effects are well grounded in ecology \cite{graingerApplyingModernCoexistence2019, koffelCompetitionFacilitationMutualism2021}, but they are usually proposed based on bistability in local patches (eg, due to high interspecific competition in Lotka-Volerra type models).
\medskip

The proof of these results can be found in Section \ref{sec:proofs_ide} and relies on comparisons between the solution $(u,v)$ of  \eqref{eq:ide-system_dim2} with solutions of the monospecific case. Note that we do not completely classify the situation, since our results do not cover the scenario where $r(\mathcal T_u)>1$, $r(\mathcal T_v\circ \mathcal S_{\bar u})>1$, $r(\mathcal T_v)>1$ but $r(\mathcal T_u\circ\mathcal S_{\bar v}) \le 1$, in which case we expect that the solution will converge to $(0, \bar v)$. 
Furthermore the question of the existence of a positive equilibrium $(u^*, v^*)$ to \eqref{eq:ide-system_dim2} remains open. Standard techniques using fixed point results (see \cite{smith_dynamical_2011,thieme_global_2011} for example) are difficult to adapt to this case. These questions will be explored numerically in the next Section.

% \madeleine{Ajouter une remarque sur le lien avec la théorie moderne de la coexistence ? ici, on a montré que l'invasibilité mutuelle est une condition suffisante à la survie, mais il n'est pas clair s'il s'agit d'une condition nécessaire.}

\subsection{Numerical exploration of the remaining cases}
\label{subsec:numIDE}

\begin{table}[h!]
    \centering
    \begin{tabular}{|c|c|c|}
        \hline
         & \textbf{Metacommunity capacity} & \textbf{Long-time behaviour}  \\
        \hline
       Extinction (Ext) & $r(\mathcal T_u)\le 1$ and  $r(\mathcal T_v)\le 1$  &  Both $u$ and $v$ go extinct.  \\ 
       \hline 
       Single species  &$r(\mathcal T_u)>1$ and  $r(\mathcal T_v)\le 1$  &  $u$ converges to $\bar{u}$, $v$ goes extinct.  \\
     extinction (SSE)     &$r(\mathcal T_u)\le 1$ and  $r(\mathcal T_v)> 1$  & 
     $u$ goes extinct, $v$ converges to $\bar{v}$.\\
     \hline 
     Mutual & $r(\mathcal T_u) > 1$ and $r(\mathcal T_v) > 1$ and & Bistability \\
     uninvasibility (MUI) & $r(\mathcal T_u\circ S_{\bar v}) \le 1$ and $r(\mathcal T_v\circ S_{\bar u}) \le 1$ &  \\
      \hline
       Non mutual &  $r(\mathcal T_u\circ S_{\bar v})>1$ and $r(\mathcal T_v\circ S_{\bar u}) \le 1$  & \textit{Undetermined.} \\
        invasion (NMI)&  $r(\mathcal T_u\circ S_{\bar v}) \le 1$ and $r(\mathcal T_v\circ S_{\bar u}) > 1$  &  \\
        \hline
       Coexistence (Coex) & $r(\mathcal T_u\circ S_{\bar v})>1$ and $r(\mathcal T_v\circ S_{\bar u}) >1$ & Both $u$ and $v$ persist.  \\
       \hline 
    \end{tabular}
    \caption{Classification of the long-time behaviour of the IDE model \eqref{eq:ide-system_dim2}, based on Theorems \ref{theo:extin}, \ref{thm:single-extinct} and \ref{theo:persist}.}
    \label{tab:Cases_classif_1}
\end{table}

The aim of this section is to explore in simulations the main open questions that remain. First, we focus on the case where both $u$ and $v$ persist, in order to establish whether the dynamics converge to a (unique) coexistence equilibrium. Second, we aim at investigating the long-time behaviour of Equation \eqref{eq:ide-system_dim2} in the undetermined case of non mutual invasion, as defined in Table \ref{tab:Cases_classif_1}. Finally, we aim at analysing the persistence outcome in the mutual uninvasibility scenario, starting from initial conditions which may not satisfy the assumptions of Proposition \ref{prop:bistable}.

% In order to achieve this, we explore numerically one possible parameter space for Equation \eqref{eq:ide-system_dim2}, which is inspired by applications in ecology and agriculture \madeleine{[REF notre article]}. 

\subsubsection{Numerical setting}
\label{sec:numerical-setting}

Numerically investigating the long-time behaviour of Equation \eqref{eq:ide-system_dim2} requires to explore the parameter space of Equation \eqref{eq:ide-system_dim2}, which we recall is given by 
\[ \tau, \sigma \in \mathcal{C}(\Omega, \R_+) \quad \text{ and } \quad c, \gamma \in \mathcal{C}(\Omega^2, \R_+). \]
In practice, we will focus on a subspace of this parameter space, which is inspired by applications to ecology and agriculture that are further developed in \cite{kubaschPrettyGoodYields2026}. 

More precisely, we assume that spatial heterogeneity results from the fact that the landscape consists of a mixture of different habitat types, for instance natural or agricultural habitat. Each point $x \in \Omega$ thus is characterized by its habitat quality $h(x) \in [0, 1]$. In our model $h(x)$ measures the farming intensity, therefore smaller values of $h(x)$ indicate mostly preserved, natural habitat whereas higher values of $x$ correspond to habitats which closely resemble farmed land. 

Throughout the following, we assume that colonization rates are independent from habitat quality, and solely rely on the euclidean distance between the departure and arrival points. Habitat quality instead conditions local extinction rates, depending on how well the species is adapted to either extreme habitat type. A more detailed presentation of the model parameters and performed simulations is given below.  

%\madeleine{\textbf{Question -} En rédigeant, je réalise que dans cette partie de l'exploration numérique, $p_A$ ne joue de fait aucun rôle (il n'influence ni les taux d'extinction ni les noyaux de dispersion), seul $H$ a un impact. En soi cela me donne envie de n'en parler que dans la seconde partie (avec le modèle harlequin).  mais le critère d'arrêt des simulations est basé (à rétrospectivement injuste titre) sur la distinction espaces agricoles ou naturels, ce qui obligerait donc quand-même à parler de $p_A, p_N$... Dernier point :  de façon générale, en considérant des taux d'extinction basés sur le fBM, augmenter $p_A$ fait mécaniquement baisser le taux d'extinction moyen au sein des espaces agricoles (je pense que ce n'est pas très grave car on ne va pas chercher véritablement d'interprétation écologique liée à $p_A$, mais c'est une grande différence avec ce qu'on fait dans l'autre article donc je le souligne pour nous).}

\paragraph{Habitat quality.} Environmental heterogeneity is modeled by a fractional Brownian sheet $B_H$ on $\Omega = [0,1]^2$, of Hurst exponent $H \in (0,1)$. For $H < 1/2$, increments are negatively correlated whereas they are positively correlated for $H > 1/2$ \cite{mandelbrotFractionalBrownianMotions1968, wuGeometricPropertiesFractional2007}. Thus the higher the Hurst exponent, the more the landscape is spatially aggregated as illustrated by Figure \ref{fig:landscapes}. Subsequently, habitat quality is obtained by normalizing $B_H$ to take values between 0 and 1: 
\[ h(x) = \frac{B_H(x) + \min_{x \in \Omega} B_H(x)}{\max_{x \in \Omega} B_H(x) - \min_{x \in \Omega} B_H(x)}. \]

Here, we consider $ H \in \{0.25, 0.5, 0.75\}$ with nine landscape replica each, thus amounting to 27 landscapes in total.

\begin{figure}
    \centering
    \includegraphics[width=\textwidth]{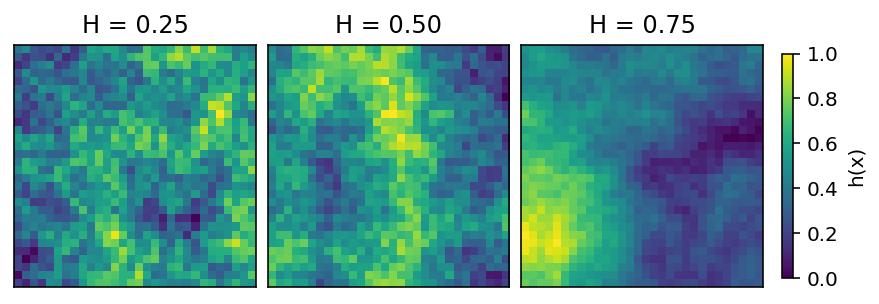}
    \caption{Three realization of $h(x)$, the habitat quality, for different values of Hurst exponent $H$, leading to increasing spatial aggregation (from left to right). Notice that $\Omega$ is discretized by a regular grid of shape $30 \times 30$.}
    \label{fig:landscapes}
\end{figure}

\paragraph{Colonization kernel.} Consider the family of exponential colonization kernels $\{c_\alpha : \Omega^2 \to (0,1]; \alpha > 0\}$, defined as follows. For $\alpha > 0$ and any $(x, y) \in \Omega^2$, 
\[ c_\alpha(x,y) = \exp(-\alpha \|x - y\|_2).  \]
In particular, the average dispersal distance equals $\alpha^{-1}$. In simulations, we let $\alpha \in \{20, 30\}$.% independently for each species. 

\paragraph{Local extinction rates.} Each species is characterized by its baseline extinction rates $(e_0, e_1) \in \{0.01, 0.006, 0.003\}^2$ in the extreme habitats of quality $0$ and $1$, respectively. 
%Each baseline extinction rate $e_i$ is defined as follows. Consider the metapopulation model \eqref{eq:monospecific-ide} in a homogeneous landscape, with extinction rate $e_{\star}$ and exponential dispersal kernel $c_{20}$: 
%\[\partial_t w(t,x) = -e_{\star} w(t,x) + (1 - w(t,x)) \int_\Omega c_{20}(x,y)w(t,y)dy. \]
 %Given $q \in (0,1)$, $e_i$ is defined as the unique value of $e_{\star}$ such that at equilibrium, $w$ occupies a fraction $q$ of the landscape. Here, we consider $q \in \{0.25, 0.5, 0.75\}$, leading to the corresponding extinction rates $e_i \in \{0.01, 0.006, 0.003\}$.
Given $(e_0, e_1)$, the species' local extinction rate $\tau$ or $\sigma$ at $x$ finally equals $e_0 + (e_1 - e_0) h(x)$.
%\manon{Je ne comprends pas ce paragraphe}
Note that with this choice, we have species with constant extinction rate when $e_0=e_1$, or species that are more adapted to a given habitat type. More precisely, when $e_1> e_0$, we will say that the species is more adapted to agricultural than natural habitat, and vice-versa.
The values for $\{e_0,e_1\}$ have been calibrated on the monospecific model with constant extinction rate (\textit{i.e.} in a homogeneous environment), in order to ensure that spatial occupation at equilibrium varies between 25 and 75\%.

\paragraph{Species pool.} From now on, each species is characterized by its parameters $(e_0, e_1, \alpha)$, leading to a total of 18 species. From this pool, we built 306 ordered couples of distinct species $(u,v)$ that will be used in the simulations.\\
Each species can further be classified according to two characteristics: 
\begin{itemize}
    \item Habitat preference: agricultural specialist ($e_1 < e_0$), natural specialist ($e_1 > e_0$) or generalist ($e_0 = e_1$). %\madeleine{Type 0, type 1 ou naturel, agricole ?}
    \item Dispersal capacity: $\alpha = 20$ or $\alpha = 30$.
\end{itemize}
We are now ready to describe the performed simulations.

\begin{figure}[ht]
    \centering
    \includegraphics[width = \textwidth]{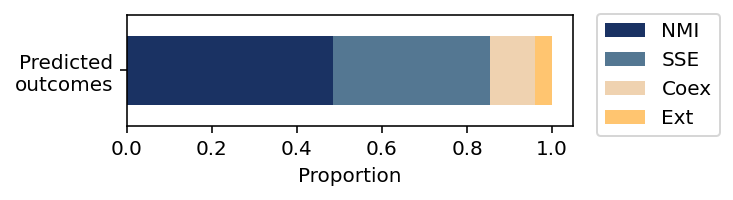}
    \caption{Observed frequencies of predictions in simulated data, based on the classification given in Table \ref{tab:Cases_classif_1}. More precisely given a landscape and a choice of parameters for the two populations, we computed the spectral radii of the associated operator to decipher the predicted outcome of the simulation. }
    \label{fig:fqcies}
\end{figure}

\paragraph{Simulation study.} Combining both the landscape realizations and the species couples $(u,v)$, the total simulation effort amounts to 8262 scenarios. For each scenario, we have  performed the following numerical experiments:
\begin{enumerate}[label=(\roman*)]
    \item Computation of $r(\mathcal{T}_u)$ and $r(\mathcal{T}_v)$, as well as $r(\mathcal{T}_u \circ \mathcal{S}_{\bar v})$ and $r(\mathcal{T}_v \circ \mathcal{S}_{\bar u})$ when appropriate. This allows to classify the simulations according to Table \ref{tab:Cases_classif_1}.
    \item Simulation of the IDE model to determine its long-time behaviour. We will in particular observe whether the dynamics converge toward a unique equilibrium.
\end{enumerate}

In particular, for (ii),  
simulations start from a homogeneous initial condition $u(0, \cdot) = v(0, \cdot) = 1/3$.
We simulate Equation \eqref{eq:ide-system_dim2} up to time $T_{\max} = 1000000$, unless there exists $k \in \{1,\cdots,50\}$  such that 
\begin{equation}
    \label{eq:cvgc-crit}
    \begin{aligned}
    \max_{s,t \in [2000(k-1), 2000k]} \|u(s,\cdot) - u(t,\cdot)\|_\infty & < 0.001 \\ 
    \text{and } \max_{s,t \in [2000(k-1), 2000k]} \|v(s,\cdot) - v(t,\cdot)\|_\infty & < 0.001.
    \end{aligned}
\end{equation}
In the latter case, we consider that the IDE has converged to an equilibrium. Notably, this has occurred in all performed simulations. 

In practice, this requires the discretization of the IDE, and of the integral operators $r(\mathcal{T}_u \circ \mathcal{S}_{p}), r(\mathcal{T}_v \circ \mathcal{S}_{p})$. Here, we approximate the square $[0,1]^2$ by a regular grid of shape $30 \times 30$, and discretize the IDE and integral operators accordingly. 
\smallskip
    
Figure \ref{fig:fqcies} depicts the number of occurrences of each case of Table \ref{tab:Cases_classif_1} observed in simulations. Notably, we did not observe any mutual uninvasibility scenarios, and we thus refer to forthcoming Section \ref{sec:mui} for further exploration of this regime. 
However, we have a satisfactory coverage of both coexistence and non-mutual invasion scenarios. We are thus ready to turn to the analysis of those simulations.

\subsubsection{Coexistence equilibrium}
\label{sec:num-coex}

\begin{figure}
    \centering
    \includegraphics[width = 0.5 \textwidth]{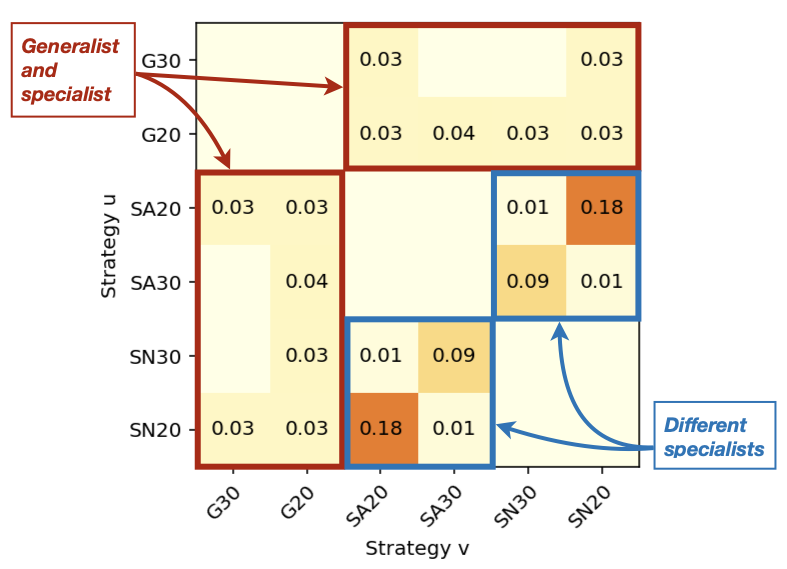}
    \caption{Proportion of coexisting strategies among all coexistence cases. Empty cases equal zero. SA20: agricultural specialist with dispersal capacity $\alpha = 20$, SN20: natural specialist with $\alpha = 20$, G20: generalist with $\alpha = 20$. SA30, SN30 and G30 are defined analogously with $\alpha = 30$.}
    \label{fig:coex-strategies}
\end{figure}

Let us first consider the coexistence scenarios, for which we want to determine whether the metacommunity dynamics converge to a coexistence equilibrium. Further, if this equilibrium exists, we would like to know whether it is unique. 
We have sampled 10 scenarios among those scenarios for which coexistence occurred for all landscape replicas (558 out of 882 coexistence outcomes). For each of those scenarios, we sample 10 random initial conditions as follows. At each point, the probability that the point $x \in \Omega$ is occupied follows a uniform distribution on $[0,1]$. In addition, given that $x$ is not empty, the probability that it is occupied by u also is uniformly distributed on $[0,1]$. Finally, for each initial condition, we simulate Equation \eqref{eq:ide-system_dim2} up to time $T_{\max} = 1000000$, unless an equilibrium is reached prior to that time according to criterion \eqref{eq:cvgc-crit}. Importantly, such convergence has occurred in all performed simulations. 
Further, for each scenario, we compute the maximum of the $L_\infty$-distance of the attained equilibria to the equilibrium reached by the first simulation of that scenario. In all cases, this distance is at most of order $10^{-4}$. This evidence indicates that in the coexistence case, the metacommunity converges to a unique coexistence equilibrium. 

Finally, we explore the species parameters allowing coexistence, and regroup them on Figure \ref{fig:coex-strategies}. We observe that two main patterns. The first case is the coexistence of two specialists of different habitat types, each of them occupying its preferred area. In the second case, a generalist and specialist share the habitat. This coexistence is facilitated when the generalist disperses better ($\alpha=20$) which is consistent with the classic prediction that dispersal helps species survival \cite{levinsDemographicGeneticConsequences1969}.

\subsubsection{Non mutual invasion}
\label{sec:nmi}

\begin{figure}
    \centering
    \includegraphics[width = 0.85\textwidth]{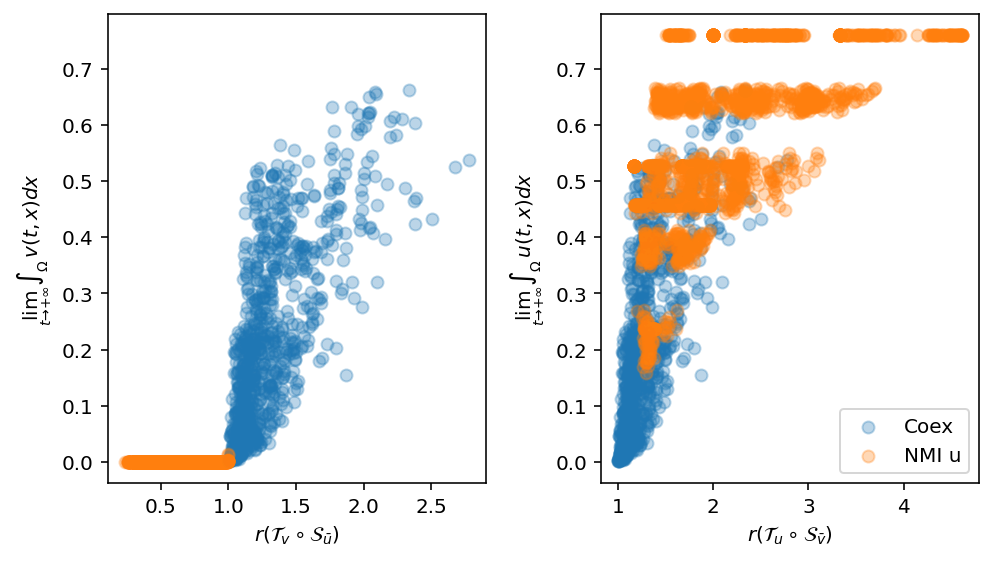}
    \caption{Numerical assessment of metacommunity persistence if both species invade zero, $u$ invades $\bar{v}$ and either $v$ invades $\bar{u}$ (Coex) or not (NMI u). (Left) Asymptotic abundance of $v$ as a function of $r(\mathcal T_v \circ \mathcal S_{\bar u})$. (Right) Asymptotic abundance of $u$ as a function of $r(\mathcal T_u \circ \mathcal S_{\bar v})$.}
    \label{fig:uq}
\end{figure}

Second, we focus on the case where both species are capable of invading the empty landscape, but only one species invades the monospecific equilibrium of the other. As the role of both species is symmetric, we will focus only on the case where $r(\mathcal{T}_u \circ \mathcal{S}_{\bar v}) > 1$, and $r(\mathcal{T}_v) > 1$ but $r(\mathcal{T}_v \circ \mathcal{S}_{\bar u}) \leq 1$. 

First, we have considered all scenarios for which $r(\mathcal{T}_u \circ \mathcal{S}_{\bar v}) > 1$ and $r(\mathcal{T}_v) > 1$. As shown in Figure \ref{fig:uq}, species $u$ always persists in this case. As established in Theorem \ref{theo:persist}, the condition $r(\mathcal{T}_v \circ \mathcal{S}_{\bar u}) > 1$ is sufficient to ensure that $v$ persists. Figure \ref{fig:uq} further suggests that this condition is necessary, as the total abundance of $v$ converges to zero as soon as $r(\mathcal{T}_v \circ \mathcal{S}_{\bar u}) \leq 1$.

Second, we check whether the metacommunity converges to an equilibrium in the non-mutual invasion scenario. As species $v$ does not persist, we expect the metacommunity to converge to $(0, \bar{u})$. Notice that $\bar{u}$ can be approximated numerically through simulations of the metapopulation model, allowing to test this assumption numerically. As in Section \ref{sec:num-coex}, we sample 10 scenarios out of all non-mutual invasion scenarios for which $u$ invades $\bar{v}$ for all landscape replica (1782 out of 2003 cases). Next, we sample 10 initial conditions at random and numerically check whether the metacommunity converges to an equilibrium. Again, all simulations have converged to an equilibrium. In addition, the distance from those equilibria to $(\bar{u},0)$ is at most of order $10^{-3}$.
\smallskip

Taken together, these observations lead to the following conjecture.

\begin{conj}
\label{conj:nmi}
    Assume $r(\mathcal T_u\circ S_{\bar v})>1$, $r(\mathcal{T}_v) > 1$ and $r(\mathcal T_v\circ S_{\bar u}) < 1$. For any initial conditions $u_0,v_0$ with $u_0+v_0\le 1$, it holds that 
    \begin{equation*}
    \lim_{t \to \infty} \| v(t, \cdot)\|_\infty = 0
    \end{equation*}
    and 
    \begin{equation*}
        \forall x \in \Omega, \quad \lim_{t \to \infty} u(t,x) = \bar{u}(x).
    \end{equation*}
\end{conj}
Naturally, the analogous result holds is the roles of $u$ and $v$ are interchanged. 
\smallskip

Together with Theorem \ref{theo:persist}, this conjecture implies that mutual invasibility is a necessary and sufficient condition for stable coexistence of both species. In particular, this is consistent with the predictions of the modern theory of coexistence \cite{chessonMechanismsMaintenanceSpecies2000}. 

Rigorously establishing Conjecture \ref{conj:nmi} however appears to be difficult. More precisely, the issue lies in the extinction of $v$. Given $r(\mathcal{T}_v) > 1$, we know that $v$ is capable of invading the empty landscape, \textit{i.e.} $v$ would persist on its own and thus must be driven to extinction through competition with $u$. Since $r(\mathcal{T}_v \circ \mathcal{S}_{\bar u}) < 1$, we know that if $u$ is absorbed by a close enough neighbourhood of $\bar{u}$, $v$ will not be able to persist. The main difficulty thus consists in ensuring that $u$ gets close to $\bar{u}$, and subsequently remains there. Proceeding as in the Proof of Theorem \ref{theo:persist}, it is possible to show that $u$ persists since $r(\mathcal T_u\circ S_{\bar v})>1$. However, further characterizing its long-time behaviour eludes this approach. 

\subsubsection{Mutual uninvasibility}
\label{sec:mui}

As we do not observe any mutual uninvasibility (MUI) in the numerical exploration detailed above, we conduct further simulations. More precisely, we consider a similar setting as introduced in Section \ref{sec:numerical-setting}, with randomly sampled model parameters:
\begin{itemize}
    \item Spatial aggregation $H$ is assumed to be uniformly distributed between 0 and 1. 
    \item For each species, the average dispersal distance $\alpha$ is sampled independently from a uniform distribution on $(1/30, 1/20)$;
    \item For each species, the baseline extinction rates $(e_0, e_1)$ are sampled independently from a uniform distribution on $(0.003, 0.01)^2$.
\end{itemize}

We consider a set of 20000 independently sampled parameter sets. For each of them, we draw a single realization of habitat quality of appropriate spatial aggregation $H$, and check whether the IDE model predicts mutual uninvasibility. Out of all 20000 parameter sets, we observe no such scenario. Mutual uninvasibility thus appears to be rare in this spatially explicit setting, contrary for instance to the classical competitive Lotka-Volterra model for which a quarter of the parameter choices leads to bistability \cite{Murray2002}. Naturally, their might also be an effect of the choice of parametrization of colonization kernels and extinction rates. In Section \ref{subsec:IDEvsharlequin_num}, we thus explore the same question in a simplified setting.

%Out of all 20000 parameter sets, we observe only one such scenario. Mutual uninvasibility thus appears to be rare in this spatially explicit setting, contrary for instance to the classical competitive Lotka-Volterra model for which a quarter of the parameter choices leads to bistability \cite{Murray2002}. 
%Note that the observed MUI scenario corresponds to a near-critical regime, with 
%\[ 0.98 < r(\mathcal T_u), \, r(\mathcal T_v),\, r(\mathcal T_u\circ S_{\bar v}),\, r(\mathcal T_v\circ S_{\bar u}) < 1.01 \] 
%and small monospecific equilibria ($\|\bar u \|_\infty$, $\| \bar v \|_\infty$ of order  $10^-3$). 

%We sample 30 initial conditions at random following the procedure detailed in Section \ref{sec:num-coex}. In each case, the metacommunity converges to one of the monospecific equilibria. This completes the result of Proposition \ref{prop:bistable}, which establishes bistability starting from initial conditions close to either monospecific equilibrium.  

\section{Comparison of the IDE model with a simplified harlequin model}
\label{sec:harlequin}

In this section, we aim at studying the difference between the spatially explicit IDE model \eqref{eq:ide-system_dim2} and a simplified spatial setting where the landscape is decomposed into two habitat types which entirely determine the event rates: there exist $\Omega_A$, $\Omega_N$ such that $$\Omega = \Omega_A \sqcup \Omega_N.$$
In order to keep track of the dynamics we consider the  averaged values of the densities $u,v$ over the two types of spaces. For $X \in \{A,N\}$, we let 
\begin{equation}
\label{def:u_x}
    \begin{aligned}
        u_X(t) = \int_{\Omega_X} u(t,x) dx, \text{ and } v_X(t) = \int_{\Omega_X} v(t,x) dx.
    \end{aligned}
\end{equation}
\subsection{The harlequin model}
Let us first consider an ideal setting, called the \textit{harlequin model}, in which the rate functions are constant by part. More precisely for all $x, y \in \Omega$:  
    \begin{equation}
    \label{eq:tau-arlequin}
         \tau(x) = \tau_A \ind_{\Omega_A}(x) + \tau_N \ind_{\Omega_N}(x) \text{ and }
         \sigma(x) = \sigma_A \ind_{\Omega_A}(x) + \sigma_N \ind_{\Omega_N}(x),
    \end{equation}
    and 
    \begin{equation}
    \label{eq:c-arlequin}
         c(x,y) = \sum_{X,Y \in \{A,N\}} c_{YX} \ind_{\Omega_X}(x) \ind_{\Omega_Y}(y) \text{ and }
         \gamma (x,y) = \sum_{X,Y \in \{A,N\}} \gamma_{YX} \ind_{\Omega_X}(x) \ind_{\Omega_Y}(y). \\
    \end{equation}

Integrating Equations \eqref{eq:ide-system_dim2} on $\Omega_A$ and $\Omega_N$ with such functions, yields the following dynamical system satisfied by $(\ub,\vb)=(u_A,u_N,v_A,v_N)$
\begin{equation*}
%\label{eq:ode-system}
    \left\{\begin{aligned}
    u_A' &= -\tau_A u_A + (p_A - u_A - v_A) (c_{AA}u_A + c_{AN} u_N), \\
    u_N' &= -\tau_N u_N + (p_N - u_N - v_N) (c_{NA}u_A + c_{NN} u_N), \\
    v_A' &= -\sigma_A v_A + (p_A - u_A - v_A) (\gamma_{AA}v_A + \gamma_{AN} v_N), \\
    v_N' &= -\sigma_N v_N + (p_N - u_N - v_N) (\gamma_{NA}v_A + \gamma_{NN} v_N). \\
    \end{aligned}\right.
\end{equation*}
where $p_X = |\Omega_X|$ for $X\in\{A,N\}$.\\

%\paragraph{Theoretical results for the harlequin model}
Similarly to the spatially explicit setting, we obtain a persistence result for the harlequin model. In order to achieve this, we first consider the case of a single species before turning to the metacommunity persistence criterion. 

\paragraph{Monospecific discrete space model}
Here we summarize the main results of Lajmanovich and Yorke \cite{lajmanovich_deterministic_1976}, who have studied the one-species equivalent of dynamical system \eqref{eq:ode-system}, namely: 
\begin{equation*}
    \left\{\begin{aligned}
    w_A' &= -\tau_A w_A + (p_A - w_A) (c_{AA}w_A + c_{AN} w_N), \\
    w_N' &= -\tau_N w_N + (p_N - w_N) (c_{NA}w_A + c_{NN} w_N). \\
    \end{aligned}\right.
\end{equation*}
In particular, the system is positively invariant in the set 
\[\Delta = \{w_A, w_N : 0 \leq w_X \leq p_X \; \forall X \in \{A,N\}\}.\] 
The authors show that the long-term dynamics of the system are entirely determined by the following quantity. Let 
\begin{equation*}
    M = \begin{pmatrix}
    p_A c_{AA} - \tau_A & p_A c_{AN} \\
    p_N c_{NA} & p_N c_{NN} - \tau_N
    \end{pmatrix},
\end{equation*}
and define $s$ as the spectral bound operator, \textit{i.e.}
\[ s(M) = \sup \{ \text{Re}(\lambda) : \lambda \in \text{spectrum}(M) \}. \]
largest real part of the eigenvalues of $M$. 
\begin{theorem}[Theorem 3.1 in \cite{lajmanovich_deterministic_1976}]
\label{thm:Laj}
Either $s(M) \leq 0$, in which case $0$ is globally asymptotically stable in $\Delta$, or $s(M) > 0$ and the system converges to an equilibrium $\overline{\ub} \in \Delta \setminus \{0\}$ which is globally asymptotically stable in $\Delta\setminus\{0\}$.
\end{theorem}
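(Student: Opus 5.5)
The plan is to follow Lajmanovich and Yorke's strategy. It exploits that the monospecific harlequin system is cooperative and irreducible, and that its vector field is concave (sublinear) on $\Delta$.

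Write the system as $w' = F(w) := -D w + (P - \mathrm{diag}(w)) C w$. Here $D=\mathrm{diag}(\tau_A,\tau_N)$, $P=\mathrm{diag}(p_A,p_N)$ and $C=(c_{XY})$ has positive entries. First I check that $\Delta$ is positively invariant. On the face $w_X=0$ we have $w_X'\ge 0$, and on the face $w_X=p_X$ we have $w_X'=-\tau_X p_X<0$. The Jacobian is
\[
DF(w) = -D + (P-\mathrm{diag}(w))C - \mathrm{diag}(Cw).
\]
Its off-diagonal entries $(p_X-w_X)c_{XY}$ are nonnegative on $\Delta$, and positive in the interior. Hence the system is cooperative, and Kamke's comparison theorem gives monotonicity of the flow in the initial condition. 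Note also that $DF(0)=M$.

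Case $s(M)\le 0$. Since $(P-\mathrm{diag}(w))Cw \le PCw$ componentwise, every solution satisfies $w' \le M w$. The matrix $M$ is Metzler and irreducible, so by Perron--Frobenius it has a positive left eigenvector $\ell$ with $\ell^T M = s(M)\ell^T$. Take $V(w)=\ell^T w$. Then
\[
V' \le s(M)V - \ell^T\mathrm{diag}(w)Cw \le -\ell^T \mathrm{diag}(w) C w \le 0,
\]
with equality only at $w=0$. LaSalle's invariance principle on the compact set $\Delta$ then gives global convergence to $0$, and $V$ also yields Lyapunov stability.

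Case $s(M)>0$. Let $e>0$ be the Perron eigenvector of $M$. For small $\varepsilon$, $F(\varepsilon e) = \varepsilon s(M)e + O(\varepsilon^2) > 0$, so $\varepsilon e$ is a strict sub-solution. The trajectory from $\varepsilon e$ is therefore increasing and converges to an equilibrium $\bar w \ge \varepsilon e$. The point $p=(p_A,p_N)$ is a super-solution, so the trajectory from $p$ decreases to an equilibrium $\hat w \ge \bar w$.

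The next step is uniqueness of the positive equilibrium, and I use concavity for it. Each component $F_X$ restricted to the ray $t\mapsto F_X(tw)$ is $t$ times a function strictly decreasing in $t$. Suppose $\bar w \le \hat w$ are two positive equilibria, and let $t^*=\min_X \bar w_X/\hat w_X\le 1$ be attained at $X$. If $t^* < 1$, then $t^*\hat w\le \bar w$ with equality in coordinate $X$. Cooperativity gives $F_X(\bar w)\ge F_X(t^*\hat w)$, while strict sublinearity gives $F_X(t^*\hat w) > t^* F_X(\hat w)=0$. This contradicts $F_X(\bar w)=0$, so $\bar w=\hat w=:\overline{\ub}$. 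The same argument shows there is no other equilibrium in $\Delta\setminus\{0\}$, because any equilibrium there is positive by irreducibility.

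Finally, take any $w_0\in\Delta\setminus\{0\}$. Since the off-diagonal coupling is strictly positive, $w(t_0)$ lies in the interior for some small $t_0>0$. I then choose $\varepsilon$ small enough that $\varepsilon e\le w(t_0)\le p$ and sandwich the trajectory between the two monotone trajectories, both of which converge to $\overline{\ub}$. Local stability follows from the same sandwich applied to the order interval $[\varepsilon e,p]$.

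The main obstacle is the uniqueness step. It requires strict sublinearity: $F_X(tw)/t$ must be strictly decreasing, which holds because $-t w_X (Cw)_X$ is strictly concave whenever $w$ is positive. A second point that needs care is strict positivity of $w(t_0)$ when $w_0$ lies on the boundary of $\Delta$, for instance when one coordinate equals $p_X$ or when $w_0$ has a zero coordinate. I would handle this with the explicit face computations given above.
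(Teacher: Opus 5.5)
Your proof is correct. The paper does not prove this statement: it is quoted from Lajmanovich--Yorke \cite{lajmanovich_deterministic_1976}. Your argument is therefore a self-contained alternative rather than a variant of an in-paper proof.

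\textbf{What you do.} For $s(M)\le 0$ you use the classical linear Lyapunov function $\ell^T w$, with $\ell$ the left Perron vector of $M=DF(0)$, together with LaSalle. For $s(M)>0$ you replace the citation by the monotone-systems toolkit:
\begin{itemize}
\item Kamke comparison on the convex box $\Delta$;
\item the strict sub-solution $\varepsilon e$ and the super-solution $p$;
\item uniqueness of the positive equilibrium by strict sublinearity;
\item a sandwich argument for global attraction.
\end{itemize}

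\textbf{What this buys over the bare citation.}
\begin{itemize}
\item It makes explicit a hypothesis that the quoted statement omits but needs: irreducibility, i.e.\ $p_X>0$ and $c_{AN},c_{NA}>0$. If $c_{AN}=c_{NA}=0$ the system decouples. One can then have $s(M)>0$ (say $p_Ac_{AA}>\tau_A$, $p_Nc_{NN}<\tau_N$) while the only nonzero equilibrium $(\bar w_A,0)$ does not attract initial conditions $(0,w_N)$.
\item It works verbatim for any finite number of habitat types.
\item It yields as by-products exactly the facts the paper later uses in the proof of Theorem~\ref{thm:exist_eq_ODE}, where they are justified only by a pointer to \cite{ducrot_differential_2022}. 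These are monotone convergence from sub- and super-solutions, and positive invariance of order intervals such as $[0,\overline{\ub}]$.
\end{itemize}
The citation buys only brevity.

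\textbf{Two small points to tidy.}
\begin{itemize}
\item The justification ``$-t\,w_X(Cw)_X$ is strictly concave'' is misworded, since that map is linear in $t$. What you actually use is that $t\mapsto F_X(tw)/t=-\tau_Xw_X+p_X(Cw)_X-t\,w_X(Cw)_X$ is strictly decreasing, because $w_X(Cw)_X>0$ for $w\gg 0$. Equivalently, $t\mapsto F_X(tw)$ is strictly concave.
\item For Lyapunov stability, state explicitly that $\varepsilon e\ll\overline{\ub}\ll p$ (no equilibrium has $w_X=p_X$). Hence the order interval $[\varepsilon e,p]$ is a neighbourhood of $\overline{\ub}$ on which your sandwich gives \emph{uniform} convergence. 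Combined with continuous dependence on initial data over a finite window $[0,T]$, this gives stability. Alternatively, use the shrinking forward-invariant intervals bounded by the time-$T$ points of the trajectories from $\varepsilon e$ and from $p$.
\end{itemize}
You also implicitly use that the limit of the trajectory from $\varepsilon e$ does not depend on $\varepsilon$ when you shrink $\varepsilon$ for the sandwich. That is exactly your uniqueness step, so it is covered, but it is worth saying.
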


\begin{rk}
The criterion from Lajmanovich and Yorke \cite{lajmanovich_deterministic_1976} relies on the exponential growth of the population being either $>0$ or $\le 0$, whereas the criterion from Delmas, Dronnier and Zitt \cite{DDZ22} is expressed as a function of the metapopulation capacity being $>1$ or $\le 1$. Actually both criteria are two side of the same coin as specified in Section 4 in \cite{DDZ22}.\\
To be more specific, let us remark that for $\tau$ and $c$ as in \eqref{eq:tau-arlequin} and \eqref{eq:c-arlequin}, and a function
$g(x) = \alpha \setind{x \in \Omega_A} + \beta \setind{x \in \Omega_N} $, we obtain a matrix representation of the operator $\mathcal T_{c/\tau}$: for $g = (\alpha, \beta)$, 
\[ \mathcal T_{c/\tau} g = \begin{pmatrix}
p_A \frac{c_{AA}}{\tau_A} & p_N \frac{c_{NA}}{\tau_N} \\
p_A \frac{c_{AN}}{\tau_A} & p_N \frac{c_{NN}}{\tau_N} \\
\end{pmatrix} g.\]
Then Proposition 4.1 in \cite{DDZ22} proves that $r(\mathcal T_{c/\tau})-1$ and $s(\mathcal T_c - \tau Id)=s(M)$ have the same sign, which provides the equivalence between the two criterion. 
\end{rk}

\paragraph{Metacommunity persistence in discrete space}

We are now ready to turn towards the metacommunity persistence criterion. For $\vb = (v_A, v_N)$ such that $0 \leq v_X \leq p_X$ for $X \in \{A,N\}$, define
\begin{equation*}
    M_u(\vb) = \begin{pmatrix}
    (p_A - v_A) c_{AA} - \tau_A & (p_A - v_A) c_{AN} \\
    (p_N - v_N) c_{NA} & (p_N - v_N) c_{NN} - \tau_N
    \end{pmatrix}.
\end{equation*}
Analogously, for $\ub = (u_A, u_N)$ such that $0 \leq u_X \leq p_X$ for $X \in \{A,N\}$, we let 
\begin{equation*}
    M_v(\ub) = \begin{pmatrix}
    (p_A - u_A) \gamma_{AA} - \sigma_A & (p_A - u_A) \gamma_{AN} \\
    (p_N - u_N) \gamma_{NA} & (p_N - u_N) \gamma_{NN} - \sigma_N
    \end{pmatrix}.
\end{equation*}
Throughout the following, we let $\overline{\ub}$ and $\overline{\vb}$ designate the non-trivial mono-specific equilibrium of each species, if it exists. 
We obtain the following classification of the long-time behaviour of the dynamical system. Notice that these results are not a direct consequence of the results that we have established for the spatially continuous model, as colonization kernels and extinction rates of the form \eqref{eq:c-arlequin} do not satisfy Assumption \ref{ass1}.

\begin{theorem}
\label{thm:persistence_ODE}
The persistence of the metacommunity system \eqref{eq:ode-system} is characterized as follows. 
\begin{enumerate}[label=(\roman*)]
    \item If $s(M_u(0)) \leq 0$ and $s(M_v(0)) \leq 0$, then $(0,0)$ is globally asymptotically stable.
    \item If $s(M_u(0)) \leq 0$ and $s(M_v(0)) > 0$, then $(0, \overline{\vb})$ is globally asymptotically stable. Analogously, if $s(M_u(0)) > 0$ and $s(M_v(0)) \leq 0$, then $(\overline{\ub}, 0)$ is globally asymptotically stable.
    \item If $s(M_u(0)) > 0$ and $s(M_v(0)) > 0$ and furthermore $s(M_u(\overline \vb)) > 0$ and $s(M_v(\overline \ub)) > 0$, the metacommunity is strongly uniformly persistent: there exists $\varepsilon>0$ such that for any initial condition $\ub_0 + \vb_0 \le 1$,
    \begin{equation*}
        \forall X \in \{A,N\}, \quad \limsup_{t \to \infty} u_X(t) \ge  \varepsilon \quad \text{and} \quad \limsup_{t \to \infty} v_X(t) \ge \varepsilon.
    \end{equation*}
    \item If $s(M_u(0)) > 0$ and $s(M_v(0)) > 0$ and furthermore $s(M_u(\overline \vb)) \le  0$ and $s(M_v(\overline \ub)) \le 0$ then the two equilibria $(\overline \ub,0)$ and $(0,\overline \vb)$ are locally stable.
\end{enumerate}
\end{theorem}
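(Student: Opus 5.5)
Each item will be proved by comparing the four-dimensional system \eqref{eq:ode-system} with the monospecific Lajmanovich--Yorke system, using Theorem \ref{thm:Laj}. The comparison rests on one observation. Since $v_X\ge 0$, each $u$-equation is bounded above by the monospecific equation:
\[ u_X' \le -\tau_X u_X + (p_X-u_X)(c_{XA}u_A+c_{XN}u_N). \]
The monospecific vector field is cooperative, because its off-diagonal terms $(p_X-u_X)c_{XY}$ are nonnegative on $\Delta$. Kamke's comparison theorem then gives $\ub(t)\le \wb(t)$, where $\wb$ solves the monospecific system with $\wb(0)=\ub(0)$. The same holds for $\vb$.

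For (i), Theorem \ref{thm:Laj} gives $\wb(t)\to 0$ under $s(M_u(0))\le0$, and similarly for $v$, so $(0,0)$ is globally attracting. Local stability comes from the same domination: $\ub$ stays below $\wb$, which stays close to $0$ when it starts close to $0$.

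For (ii), the comparison gives $\ub\to0$. Then $\vb$ solves an asymptotically autonomous system whose limit is the monospecific $v$-system. I will show $\liminf v_X>0$ whenever $\vb_0\neq 0$. For $t$ large, $u$ is small, so $\vb$ dominates a solution of the monospecific system with $p_X$ replaced by $p_X-\delta$. For $\delta$ small, $s(M_v)$ stays positive, and that system converges to its own nonzero equilibrium $\overline{\vb}^\delta$. The upper bound $\vb\le$ (monospecific solution) $\to\overline{\vb}$ then sandwiches $\vb$ between $\overline{\vb}^\delta$ and $\overline{\vb}$, and $\overline{\vb}^\delta\to\overline{\vb}$ by continuity of the equilibrium in $p$. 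Hence $\vb\to\overline{\vb}$. The case $\vb_0=0$ must be excluded, or else the statement is read on the set $\vb_0\ne0$. The symmetric case is identical.

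For (iii), I will apply abstract persistence theory (acyclicity theorem, \cite{smith_dynamical_2011}, Thm.~8.17). The steps are as follows.
\begin{enumerate}
\item Take the state space $X=\{0\le u_X+v_X\le p_X\}$, which is compact and forward invariant, and the persistence function $\rho(\ub,\vb)=\min(u_A+u_N,\,v_A+v_N)$.
\item On the boundary $\{\rho=0\}$, Theorem \ref{thm:Laj} shows the $\omega$-limit sets are contained in $\{(0,0),(\overline{\ub},0),(0,\overline{\vb})\}$. No cycle occurs, since $(0,0)\to$ the two others and nothing returns.
\item Check that each of these equilibria is weakly repelling, i.e.\ $\limsup \rho\ge\eta$ along trajectories starting with $\rho>0$.
\end{enumerate}
The repulsion in step 3 is the main obstacle. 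Near $(0,\overline{\vb})$, linearising the $u$-equations gives $\ub'\approx M_u(\vb)\ub$. Suppose a trajectory stayed in a small neighbourhood forever. Then $\ub'\ge M_u(\overline{\vb}+\epsilon)\ub - O(|\ub|^2)$. Because $M_u$ is a Metzler, irreducible matrix (with $c_{XY}>0$), Perron--Frobenius gives a positive eigenvector $e$ with eigenvalue $\lambda=s(M_u(\overline{\vb}+\epsilon))>0$ for $\epsilon$ small, by continuity of $s$. Hence $\langle \ell,\ub\rangle$, with $\ell$ the positive left eigenvector, grows exponentially, which is a contradiction. Near $(0,0)$ the same argument uses $M_u(0)$ or $M_v(0)$. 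Strong uniform persistence follows from weak persistence by compactness and dissipativity (\cite{smith_dynamical_2011}, Thm.~4.5). Componentwise positivity on each patch $X$ then follows from irreducibility: positive mass in one patch feeds the other at a rate bounded below. This yields the stated $\varepsilon$ for each $u_X,v_X$.

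For (iv), I will linearise at $(\overline{\ub},0)$. The Jacobian is block triangular, with blocks $J_u$ and $M_v(\overline{\ub})$. The block $J_u$ is the linearisation of the monospecific system at its globally stable equilibrium, which is Hurwitz by the Lajmanovich--Yorke analysis. The block $M_v(\overline{\ub})$ has $s\le0$. When $s<0$ the Jacobian is Hurwitz and local stability follows from linearisation. The critical case $s=0$ is delicate. There I will use the Perron vector $\ell$ of $M_v(\overline{\ub})$ and the Lyapunov-type quantity $\langle\ell,\vb\rangle$. Its derivative is $\langle \ell,(M_v(\ub)-M_v(\overline{\ub}))\vb\rangle$ plus a term $-\langle\ell, \mathrm{diag}(v)\gamma\vb\rangle$, which is negative and of second order. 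Combined with a comparison showing $\ub$ stays near and above $\overline{\ub}$ minus a term of order $|\vb|$, I expect this to give stability, though possibly only Lyapunov stability rather than asymptotic stability in the critical case.
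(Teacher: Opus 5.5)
Your items (i) and (ii) follow the paper's comparison argument. Your remark that $\vb_0\neq 0$ is needed in (ii) is correct; the paper uses it silently.

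\textbf{The gap: the critical case of (iv).} When $s(M_v(\overline{\ub}))=0$, the Lyapunov function you propose cannot establish stability. Take $\ell>0$ to be the left Perron vector of $M_v(\overline{\ub})$. Then, exactly,
\[
\frac{d}{dt}\langle \ell,\vb\rangle=\sum_{X\in\{A,N\}}\ell_X\,\bigl(\overline{u}_X-u_X-v_X\bigr)\,(\gamma\vb)_X ,
\]
so you would need $u_X+v_X\ge\overline{u}_X$ on both patches. Near $(\overline{\ub},0)$, however, $\ub$ relaxes to $\overline{\ub}-P\vb+O(|\vb|^2)$, where $P=(-J_u)^{-1}\mathrm{diag}(c\overline{\ub})\ge 0$. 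The identity $-J_u\overline{\ub}=\mathrm{diag}(c\overline{\ub})\overline{\ub}$ gives $P\overline{\ub}=\overline{\ub}$, so $r(P)=1$.

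Hence $\overline{u}_X-u_X-v_X\approx((P-I)\vb)_X$. By Collatz--Wielandt, its two components have opposite signs unless $\vb$ is parallel to $\overline{\ub}$. The deficit of $\ub$ is of exactly the same order as $\vb$, so a bound of the form $\ub\ge\overline{\ub}-O(|\vb|)$ does not help.

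On the centre manifold $\vb\approx\xi e$, with $e$ the right Perron vector, one gets
\[
\xi'\propto\xi^2\sum_X\ell_X((P-I)e)_X(\gamma e)_X+O(\xi^3).
\]
Nothing in the hypotheses of (iv) visibly fixes the sign of this coefficient. When it is positive, $(\overline{\ub},0)$ is unstable from the side $\vb>0$, so it is not even Lyapunov stable.

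The same mechanism already appears in Lotka--Volterra competition $u'=u(1-u-v)$, $v'=v(1-v-2u)$. The non-strict mutual uninvasibility conditions hold. Yet near $(0,1)$ the fast variable gives $v\approx 1-2u$, and then $u'\approx u^2>0$. So, unless some extra structure forces that coefficient to be nonpositive, the critical case should be excluded from (iv) rather than proved.

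The paper's own route, adapting the proof of Proposition~\ref{prop:bistable}, does not cover it either. It needs $s(M_v((1-\delta)\overline{\ub}))<0$ for small $\delta$, whereas monotonicity gives $s(M_v((1-\delta)\overline{\ub}))\ge s(M_v(\overline{\ub}))=0$.

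\textbf{Elsewhere your routes differ from the paper's but are sound.}

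In the strict case of (iv), linearising with the block-triangular Jacobian is cleaner than the paper's trapping argument. You should justify that $J_u$ is Hurwitz directly, because global asymptotic stability alone does not give hyperbolicity. It follows since $J_u$ is Metzler and $J_u\overline{\ub}=-\mathrm{diag}(c\overline{\ub})\overline{\ub}<0$ with $\overline{\ub}>0$.

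For (iii), the acyclicity theorem works. The repulsion you actually prove is that no trajectory with $\rho>0$ stays forever near an equilibrium; that is the right condition, rather than $\limsup\rho\ge\eta$ as you stated it. The same exponential-growth estimate gives isolatedness.

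The paper's argument for (iii) is much shorter, because the bound $\vb(t)\le\Phi_v(t,\vb^0)\to\overline{\vb}$ is global:
\begin{itemize}
\item For $t\ge T_\alpha$, every trajectory has $\vb\le(1+\alpha)\overline{\vb}$.
\item So $\ub$ dominates the monospecific flow with $p_X$ replaced by $p_X-(1+\alpha)\overline{v}_X$.
\item That flow has spectral bound $s(M_u((1+\alpha)\overline{\vb}))$, which stays positive for small $\alpha$.
\end{itemize}
This gives $\liminf_t\ub\ge\overline{\ub}^{\alpha}>0$ componentwise and uniformly in the initial condition. It needs no persistence theory and no final irreducibility step. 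Your local estimate near $(0,\overline{\vb})$ is exactly this comparison restricted to a neighbourhood; the global upper bound makes the localisation unnecessary.
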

Note that similarly to the IDE system, our results do not cover the cases where both $s(M_u(0)) > 0$ and $s(M_v(0)) > 0$ but only one of the two species can invade the other's equilibrium, that is $s(M_u(\overline \vb))>0$ but $s(M_v(\overline \ub)) \le 0$ or the converse.

In addition to the persistence result, we also prove the existence of a non trivial positive equilibrium $(\ub^*, \vb^*)$.

\begin{theorem}
\label{thm:exist_eq_ODE}
Assume $s(M_u(0)) > 0$ and $s(M_v(0)) > 0$ and furthermore $s(M_u(\overline \vb)) > 0$ and $s(M_v(\overline \ub)) > 0$, then there exists a positive equilibrium $(\ub^*, \vb^*)$ which furthermore verifies that $ \ub^* \le \overline \ub $ and $\vb^* \le \overline \vb$.
\end{theorem}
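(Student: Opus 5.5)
We propose to obtain the coexistence equilibrium as a fixed point of a monotone map built from the monospecific theory. For $\vb \in \Delta$, with $0 \le v_X \le p_X$, consider the one-species system for $u$ in the reduced habitat $p_X - v_X$:
\begin{equation*}
w_X' = -\tau_X w_X + (p_X - v_X - w_X)\,(c_{XA} w_A + c_{XN} w_N), \qquad X \in \{A,N\}.
\end{equation*}
Its linearization at $0$ is $M_u(\vb)$. By Theorem \ref{thm:Laj}, applied with $p_X$ replaced by $p_X - v_X$, this system has a unique nontrivial globally attracting equilibrium $U(\vb)$ whenever $s(M_u(\vb)) > 0$, and we set $U(\vb) = 0$ otherwise. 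We define $V(\ub)$ analogously from $M_v(\ub)$. A pair $(\ub^*, \vb^*)$ is then an equilibrium of \eqref{eq:ode-system} exactly when $\ub^* = U(\vb^*)$ and $\vb^* = V(\ub^*)$. Equivalently, $\vb^*$ is a fixed point of $\Phi = V \circ U$ such that $U(\vb^*) \neq 0$ and $\vb^* \neq 0$.

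\textbf{Monotonicity.} Both maps are order-reversing. Indeed, the entries of $M_u(\vb)$ are nonincreasing in $\vb$, so $s(M_u(\vb))$ is nonincreasing by Perron--Frobenius monotonicity for Metzler matrices. Moreover, if $\vb \le \vb'$, then $U(\vb)$ is an upper solution of the cooperative system associated with $\vb'$. Since that system is cooperative (its off-diagonal entries are nonnegative), trajectories starting from $U(\vb)$ are nonincreasing and converge to an equilibrium, which is $U(\vb')$ or $0$. Hence $U(\vb') \le U(\vb)$. Composing two order-reversing maps, $\Phi = V \circ U$ is order-preserving on $[0, \overline{\vb}]$. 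Note also that $U(0) = \overline{\ub}$ and $V(0) = \overline{\vb}$.

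\textbf{Iteration.} We start from $\vb_0 = \overline{\vb}$ and set $\vb_{n+1} = \Phi(\vb_n)$. Since $\Phi(\overline{\vb}) \le V(0) = \overline{\vb}$, monotonicity makes the sequence nonincreasing, so it converges to some $\vb^* \ge 0$, and we set $\ub^* = U(\vb^*)$. To pass to the limit in the fixed-point relation, we need continuity of $U$ and $V$ on the region where $s > 0$. This holds because a nontrivial equilibrium is hyperbolic: irreducibility comes from $c_{AN}, c_{NA} > 0$, and the Jacobian at the positive equilibrium has negative spectral bound, a standard fact in the Lajmanovich--Yorke setting. The implicit function theorem then gives continuity. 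The bounds $\ub^* \le \overline{\ub}$ and $\vb^* \le \overline{\vb}$ follow from $\vb^* \le \overline{\vb}$ and from $U(\vb^*) \le U(0) = \overline{\ub}$.

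\textbf{Main obstacle: positivity.} The key difficulty is to rule out $\vb^* = 0$ and $\ub^* = 0$, and to make sure the limit stays where $U$ and $V$ are continuous. This is where the invasion hypotheses enter. Monotonicity of $\Phi$ alone does not prevent degeneration, since $\Phi(\overline{\vb}) = V(U(\overline{\vb}))$ and $U(\overline{\vb})$ could be small. I would first try to build a strictly positive lower bound: find $\underline{\vb} > 0$ with $\Phi(\underline{\vb}) \ge \underline{\vb}$ and $\underline{\vb} \le \overline{\vb}$. Then $\vb_n \ge \underline{\vb}$ for all $n$ by induction, and $\vb^* \ge \underline{\vb}$.

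The natural candidate is $\underline{\vb} = \varepsilon \mathbf{e}$, where $\mathbf{e}$ is a positive Perron eigenvector of $M_v(\overline{\ub})$. Because $s(M_v(\overline{\ub})) > 0$ and $U(\varepsilon \mathbf{e}) \le \overline{\ub}$, the vector field for $v$ in the environment $U(\varepsilon\mathbf{e})$ points upward at $\varepsilon \mathbf{e}$ for small $\varepsilon$. Thus $\varepsilon \mathbf{e}$ is a lower solution, which gives $V(U(\varepsilon \mathbf{e})) \ge \varepsilon \mathbf{e}$.

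Symmetrically, $s(M_u(\overline{\vb})) > 0$ together with $\vb^* \le \overline{\vb}$ gives $s(M_u(\vb^*)) \ge s(M_u(\overline{\vb})) > 0$, so $\ub^* = U(\vb^*)$ is nonzero. It is then positive by irreducibility, which completes the construction of the positive equilibrium $(\ub^*, \vb^*)$.
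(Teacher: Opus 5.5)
Your argument is correct, and it takes a genuinely different route from the paper. The paper works directly with the four-dimensional flow. It combines the upper comparisons $\ub(t)\le\Phi_u(t,\ub^0)$, $\vb(t)\le\Phi_v(t,\vb^0)$ with lower comparisons against the monospecific systems in the habitats $p-\overline{\vb}$ and $p-\overline{\ub}$. From these it builds a compact convex box inside $[0,\overline{\ub}]\times[0,\overline{\vb}]$, bounded away from the boundary equilibria, claims it is positively invariant, and applies the Brouwer-type \cite[Lemma 4.1]{lajmanovich_deterministic_1976} to get an equilibrium in it. You instead reduce the problem to a fixed point of the order-preserving planar map $\Phi=V\circ U$ and reach it by monotone iteration from $\overline{\vb}$.

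The paper's route is shorter and needs nothing about how the reduced equilibria depend on the habitat. Yours needs more ingredients: uniqueness of the nontrivial reduced equilibrium, order reversal of $U$ and $V$, and continuity via hyperbolicity and the implicit function theorem. In return it is constructive and gives more:
\begin{itemize}
\item Every coexistence equilibrium satisfies $\vb=V(\ub)\le V(0)=\overline{\vb}$, and symmetrically $\ub\le\overline{\ub}$, so the bounds in the statement hold for all of them.
\item Any fixed point $\vb^\dagger$ of $\Phi$ satisfies $\vb^\dagger=\Phi^n(\vb^\dagger)\le\Phi^n(\overline{\vb})$. Hence your $(\ub^*,\vb^*)$ is the coexistence equilibrium with largest $\vb$-component and smallest $\ub$-component.
\item Iterating $U\circ V$ from $\overline{\ub}$ gives the opposite extremal equilibrium, which is a natural handle on uniqueness.
\end{itemize}
Your lower solution $\varepsilon\mathbf{e}$ also has the right shape. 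A lower face at a constant level $\eta$, as in the paper's box, points inward for small $\eta$ only if $M_u(\overline{\vb})\mathbf{1}>0$ componentwise (and likewise for $v$). That does not follow from $s(M_u(\overline{\vb}))>0$, so the paper's box needs exactly this modification.

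Three small points. First, in the order-reversal step $U(\vb)$ need not lie below $p-\vb'$. Above that level the $\vb'$-system is not cooperative, since the off-diagonal Jacobian entries $(p_X-v'_X-w_X)c_{XY}$ become negative. Moreover, excluding that the decreasing trajectory tends to $0$ needs the global attractivity of Theorem~\ref{thm:Laj}. Both issues disappear if you compare the other way. We have $U(\vb')\le p-\vb'\le p-\vb$. At $U(\vb')$, the $u$-vector field in habitat $p-\vb$ exceeds the one in habitat $p-\vb'$, which vanishes there, by $(v'_X-v_X)\bigl(c_{XA}U_A(\vb')+c_{XN}U_N(\vb')\bigr)\ge 0$. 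So the trajectory increases inside the cooperative region to a nonzero equilibrium, namely $U(\vb)$.

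Second, positivity is easier than you expect. Since $U(\vb)\le U(0)=\overline{\ub}$ for every $\vb$, order reversal gives $\Phi(\vb)\ge V(\overline{\ub})>0$ directly, with no Perron vector needed.

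Third, the bound $\ub^*\le\overline{\ub}$ gives $s(M_v(\ub^*))\ge s(M_v(\overline{\ub}))>0$. This is the continuity of $V$ at $\ub^*$ that your passage to the limit uses but that you did not state.
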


%\madeleine{Je crois que $s(M_u(\bar{v}) > 0 \implies s(M_u(0)) > 0$ (pour alléger les énoncés)}

The proof relies on the construction of a compact set positively invariant for the dynamics, which contains none of the equilibria $(0,0)$, $(0,\overline \vb)$ or $(\overline \ub, 0)$.

\subsection{Numerical comparison of the IDE and harlequin models}
\label{subsec:IDEvsharlequin_num}

\begin{table}[ht]
    \centering
    \begin{tabular}{|c|c|c|}
        \hline
         & IDE model & harlequin model  \\
        \hline
       Extinction (Ext) & $r(\mathcal T_u)\le 1$ and  $r(\mathcal T_v)\le 1$  &  $s(M_u(0))\le 0$ and $s(M_v(0)) \le 0$  \\ 
       \hline 
       Single species  &$r(\mathcal T_u)>1$ and  $r(\mathcal T_v)\le 1$  &  $s(M_u(0) )> 0$ and $s(M_v(0)) \le 0$  \\
     extinction (SSE)     &$r(\mathcal T_u)\le 1$ and  $r(\mathcal T_v)> 1$  &  $s(M_u(0))\le 0$ and $s(M_v(0)) > 0$  \\
     \hline 
     Mutual & $r(\mathcal T_u) > 1$ and $r(\mathcal T_v) > 1$ and & $s(M_u(0))> 0$ and $s(M_v(0)) > 0$ and \\
     uninvasibility (MUI) & $r(\mathcal T_u\circ S_{\bar v}) \le 1$ and $r(\mathcal T_v\circ S_{\bar u}) \le 1$ & $s(M_u(\overline \vb))\le 0$ and $s(M_v(\overline \ub)) \le 0$ \\
      \hline
       Non mutual &  $r(\mathcal T_u\circ S_{\bar v})>1$ and $r(\mathcal T_v\circ S_{\bar u}) \le 1$  & $s(M_u(\overline \vb))>0$ and $s(M_v(\overline \ub))\le 0$\\
        invasion (NMI)&  $r(\mathcal T_u\circ S_{\bar v}) \le 1$ and $r(\mathcal T_v\circ S_{\bar u}) > 1$  & $s(M_u(\overline \vb))\le 0$ and $s(M_v(\overline \ub)>0$\\
        \hline
       Coexistence (Coex) & $r(\mathcal T_u\circ S_{\bar v})>1$ and $r(\mathcal T_v\circ S_{\bar u}) >1$ & $s(M_u(\overline \vb))>0$ and $s(M_v(\overline \ub))>0$  \\
       \hline 
    \end{tabular}
    \caption{Classification of predicted persistence outcomes for the IDE and harlequin models.}
    \label{tab:Cases_classif}
\end{table}

When the rate functions are not constant over $\Omega_A$ and $\Omega_N$, we want to approximate the averaged IDE model $(u_A,u_N,v_A,v_N)$ defined in \eqref{def:u_x} with the solution of a simplified harlequin system.
To do so, we need to partition artificially each landscape of Section \ref{subsec:numIDE} into an agricultural area $\Omega_A$ and a natural area $\Omega_N$ such that $\Omega = \Omega_A \sqcup \Omega_N$.  We naturally define averaged rates for $X\in\{A,N\}$ by
\[ \widehat \tau_X = \frac{1}{p_X}\int_{\Omega_X} \tau(x)dx \quad \text{and}\quad \widehat \sigma_X = \frac{1}{p_X}\int_{\Omega_X} \sigma(x)dx, \]
and for $X,Y\in\{A,N\}$ 
\[ \widehat c_{XY} = \frac{1}{p_X p_Y}\int_{\Omega_X}\int_{\Omega_Y} c(x,y)dxdy \quad \text{and}\quad  \widehat \gamma_{XY} = \frac{1}{p_X p_Y}\int_{\Omega_X}\int_{\Omega_Y} \gamma(x,y)dxdy.\]
With these rates, we construct a solution $(\widehat \ub,\widehat \vb)=(\hat u_A, \hat u_N, \hat v_A, \hat v_N) $ of \eqref{eq:ode-system} and we will study if this approximation gives a good prediction for the long time behaviour of the populations. 
The main interest would be to provide  a simplified model whose prediction only require to compute the eigenvalues of a two dimensional matrix.\smallskip

Here, we consider different partition possibilities based on level sets of habitat quality. 
More precisely, given $h$, we designate by $q_p$ its empirical quantile of order $p \in [0,1]$. With this notation, we fix $p_A \in (0,1)$ and define the partition at level $p_A$ by
\[ A = \{ x \in [0,1]^2 : h(x) \geq q_{1 - p_A} \} \quad \text{and} \quad N = \Omega \setminus A.\] 
In particular, $p_A = |A|$ thus corresponds to the proportion of farmed land. 

%Throughout the following, we consider $p_A \in \{0.25, 0.5, 0.75\}$. For each value of spatial aggregation $H$, three landscape replicas are partitioned at each level $p_A$. 

\paragraph{Impact of the partition}
In order to explore the impact of the value of $p_A$ on the long time behaviour of the population dynamics of the harlequin approximation, we sampled 20 scenarios uniformly at random and for each we studied the harlequin approximation associated with values for  $p_A\in\{0.1, 0.2, 0.3, 0.4, 0.5,0.6, 0.7, 0.8, 0.9\}$. 
In each case, we use our theoretical criterion to predict the persistence outcome (see Table \ref{tab:Cases_classif}). We observe that in 18 out of 20 cases, the prediction does not depend on the partition induced by $p_A$, and in the 3 remaining cases, the differences are due to the computation of small eigenvalues (of order at most $10^{-4}$).

\paragraph{Comparison of the prediction}
We now focus on the theoretical predictions for both the IDE model and its harlequin approximation in the case $p_A=0.5$. Recall that with Theorems \ref{theo:extin}, \ref{thm:single-extinct}, \ref{theo:persist} and \ref{thm:persistence_ODE}, as well as Conjecture \ref{conj:nmi}, we obtain the different cases enumerated in Table \ref{tab:Cases_classif}.

\begin{figure}[ht]
    \centering
    \includegraphics[width = \textwidth]{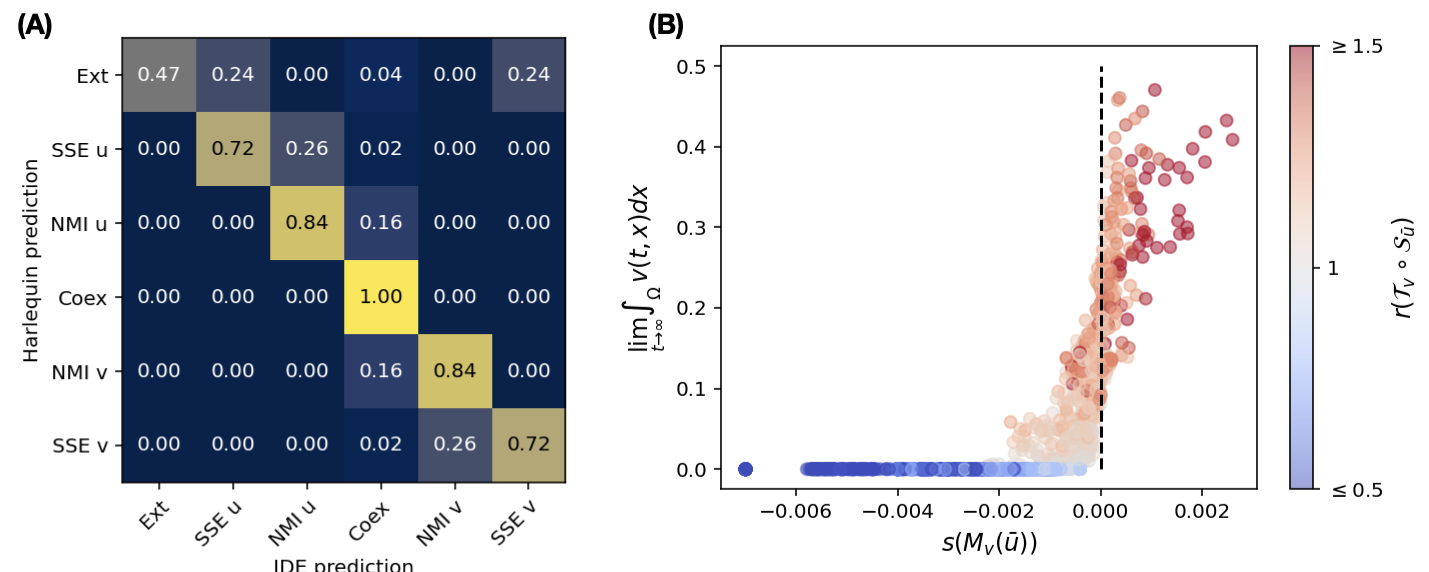}
    \caption{(A) Comparison of persistence outcomes predicted by the harlequin model (rows) and the IDE model (columns). Rows are normalized to sum to 1, so that each row indicates the probability that the IDE model predicts some outcome, given the prediction of the harlequin model. For instance, out of all scenarios leading to extinction for the harlequin model, only 45\% yield the same outcome when using the IDE model. (B) Focus on scenarios for which $s(M_v(0)) > 0$ and $s(M_u(\bar{v}))>0$. Asymptotic total abundance of $v$ obtained in simulations of the IDE model, as a function of $s(M_v(\bar{u}))$. The threshold for persistence of $\hat{\vb}$ in the harlequin model is indicated by the dotted line. Colors indicate the value of the theoretical IDE persistence criterion for $v$, namely $r(\mathcal T_v \circ \mathcal S_{\bar u})$.}
    \label{fig:harlequin}
\end{figure}

We numerically computed the prediction for both the IDE model and its harlequin approximation in the different scenario presented in Section \ref{subsec:numIDE}. We compared for every scenario the prediction of the harlequin approximation with the prediction of the IDE model and give the agreement table in Figure \ref{fig:harlequin}A. We observe that is most cases, the prediction of the harlequin approximation is similar to the prediction of the IDE model. However, when the harlequin approximation is wrong, it tends to predict more extinction than the true IDE model. For example, among all the scenario for which the harlequin's approximation predicted "SSE u" ($s(M_u(0) )> 0$ and $s(M_v(0)) \le 0$ ), 26$\%$ of them actually corresponded to the non-mutual invasion case "NMI u". We highlight that this miss-classification does not change the long time behaviour of the solutions, since in both cases "SSE u" and "NMI u" lead to the persistence of $u$ and extinction of $v$ (see Section \ref{sec:nmi}).
% \begin{figure}[ht]
%     \centering
%     \includegraphics[width = 0.4\textwidth]{figures/heatmap_prediction_v2.png}
%     \caption{Heatmap IDE vs HQ \manon{Ici il faudrait expliquer que la somme sur une ligne fait 1, ce qui veut dire }}
%     \label{fig:heatmap}
% \end{figure}

We then focus on the cases where $s(M_u(0))>0$ and draw on Figure \ref{fig:harlequin}B the limiting values of the total abundance of the solution $v$ of the IDE model as a function of $s(M_v(\overline \ub)) $. For each scenario, we color the obtained dot depending on $r(\mathcal T_v\circ S_{\bar u})$. The red colored dot situated on the left of the dotted line are scenarios for which the harlequin approximation predict the extinction of the $v$ population whereas this population survives in the IDE model. We note that many predictions errors of the harlequin approximation corresponds to cases where $r(\mathcal T_v\circ S_{\bar u})$ is close to $1$, which might explain why the approximation predicts $s(M_v(\overline \ub)) \le 0 $. 

%\madeleine{Finally, the single case of mutual uninvasibility studied in Section \ref{sec:mui} is not classified as MUI by the harlequin approximation, which instead predicts extinction of both species. However, as this scenario is near-critical for the IDE model, this mis-classification is unsurprising.} 
%\manon{Je pense que j'en n'aurais pas parlé ici en fait.}
\smallskip

As a conclusion, the harlequin approximation gives surprisingly good predictions for the limiting outcome of the IDE system, notably in the coexistence case. Furthermore all errors committed by the harlequin approximation go in the same direction: the approximation predicts the extinction of one or several species that survives in the IDE model. These two phenomena might be worth to explore from a theoretical perspective in future work.

\paragraph{Mutual uninvasibility} Finally, as we have not observed any MUI scenarios with the IDE model, we investigate MUI with the harlequin model. We consider a setting similar to the one above, with randomly sampled model parameters:
\begin{itemize}
    \item We fix $p_A = 0.5$.
    \item For each species, extinction rates per habitat type are sampled independently from a uniform distribution on $(0.003,0.01)$. Lower and upper bounds are given by the minimum and maximum approximate extinction rates $(\widehat{\tau}_X,\; X \in \{A,N\})$ computed in the previous section.
    \item For each species, colonization rates between each pair of habitat types are sampled independently from a uniform distribution on $(0.001,0.025)$. Lower and upper bounds are given by the minimum and maximum approximate extinction rates $(\widehat{c}_{XY},\; X, Y \in \{A,N\})$ computed in the previous section.
\end{itemize}

We sample independently 20000 parameter sets out of which 81 correspond to MUI scenarios (0.4$\%$), further emphasizing the rarity of this setting. Remarkably, all MUI scenarios are near-critical, as 
\[-0.001 < s(M_u(0)), s(M_v(0)), s(M_u(\bar \vb)), s(M_v(\bar \ub)) < 0.015. \]

For each MUI scenario, we consider 50 random initial conditions obtained as follows. We first sample from a uniform distribution on $(0,1)$ the proportion $q_X$ of occupied space in habitat $X$. Second, we sample the fraction $\rho_X$ of occupied habitat which is inhabited by species $u$ from a uniform distribution on $(0,1)$. This finally leads to $u_X(0) = q_X \rho_X$ and $v_X(0) = q_X(1-\rho_X)$. This procedure is executed independently for each habitat type. For each initial condition, we simulate the harlequin metacommunity until an equilibrium is reached (condition \eqref{eq:cvgc-crit}), which has occurred in all simulations. 

For each parameter set, we observe that starting from the majority of initial conditions, the dynamics converge to either monospecific equilibrium, illustrating the bistability described in Theorem \ref{thm:persistence_ODE}. However, in 58 out of all 81 MUI scenarios, starting from some initial conditions the dynamics do not converge to either monospecific equilibrium ($L^\infty$-distance to both $\bar \ub$ and $\bar \vb$ greater than $0.03$). Instead, both species persist and the system reaches a coexistence equilibrium. In addition, for 4 scenarios, there appear to be multiple coexistence equilibria (distance between at least two simulated coexistence equilibria greater than $0.03$ in $L^\infty$-distance). 
This opens an interesting research question, to provide a full characterization of the long time behaviour of the solutions in the harlequin case. This is left for future work, as we believe that it requires development of new approaches going beyond comparison arguments.

\section{Proofs for the integrodifferential model}
\label{sec:proofs_ide}
\subsection{Additional properties of the monospecific dynamics}
\label{sec:model1D}
% In this section, we consider the following equation:
% \begin{equation}
% %\label{eq:monospecific-ide}
% % \begin{aligned}
% \begin{cases}
%     \partial_t w(t,x) &= -\tau(x) w(t,x) + (1 - p(x) - w(t,x)) \int_\Omega c(x,y)w(t,y)dy, \\
%     \quad w(0,x) &= w_0(x).
% \end{cases}
% % \end{aligned}
% \end{equation}

In this section, we focus on the integro-differential equation \eqref{eq:monospecific-ide}.
We know from \cite[Proposition 2.9]{DDZ22} that the IDE \eqref{eq:monospecific-ide} is forward-invariant in
\[\Delta_p =\{ g \in L^{\infty}(\Omega, [0,1]) : g \leq 1 - p\}, \] 
and that the associated semi-flow is well defined.
%\manon{Les numéros d'équations ne sont pas cohérents}
More precisely, the semi-flow corresponds to the unique function 
\[ \phi_p : \R_+ \times \Delta_p \times \mathcal{C}(\Omega, (0,+\infty)) \times  \mathcal{C}(\Omega^2, (0,+\infty)]) \to \Delta_p \]
such that $t \mapsto \phi_p(t, w_0, \tau, c)$ is the unique solution to Equation \eqref{eq:monospecific-ide}. 

To start with we aim at verifying that the equation is well posed for initial conditions in 
\[\Delta_0 =\{ g \in L^{\infty}(\Omega, [0,1]) : g \leq 1 \}.\] 
\begin{lemma}
\label{lemma:con_ini}Assume that $0<p<1$. Consider any initial condition $w_0 \in \Delta_0$ and denote by $A\subset \Omega$ such that for all $x\in A$, $w_0(x)>1-p(x)$.\\
Then, equation \eqref{eq:monospecific-ide} admits a unique solution $w$ well defined on $[0,\infty[$ . Furthermore there exists a finite time $T_{A,w_0}$ after which the solution satisfies $w(t,\cdot)\in\Delta_p$ for all  $t\ge T_{A,w_0}$.
%\manon{Lemme à écrire, en gros si on prend une condition initiale dans $\Delta_0$, alors la solution reste bien définie pour tout temps. De plus, elle est positive et entre dans $\Delta_p$ en temps fini.}
\end{lemma}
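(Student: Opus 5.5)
We argue in two steps: first well-posedness on $\Delta_0$, then finite-time entrance into $\Delta_p$. Throughout we use the bounds $0<\tau_{\min}\le\tau\le\tau_{\max}$ and $0<c\le c_{\max}$ on the compact $\Omega$ (Assumption \ref{ass1}). We also use $|\Omega|=1$, so for any $g\in\Delta_0$ we have $0\le \int_\Omega c(x,y)g(y)dy\le c_{\max}$.

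\textbf{Step 1: local existence, uniqueness and invariance of $\Delta_0$.} The right-hand side
\[F(w)(x)=-\tau(x)w(x)+(1-p(x)-w(x))\int_\Omega c(x,y)w(y)dy\]
is a locally Lipschitz map of $L^\infty(\Omega)$ into itself, since it is a polynomial of degree two built from bounded multiplication operators and the bounded integral operator with kernel $c$. Cauchy--Lipschitz in the Banach space $L^\infty$ therefore gives a unique maximal solution.

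To show that $\Delta_0$ is forward invariant, we check the boundary pointwise.
\begin{itemize}
\item[(a)] Lower bound. Write the equation as $\partial_t w=-a(t,x)w+b(t,x)$, with $a=\tau+\int c\,w$ and $b=(1-p)\int c\,w$. As long as $w\ge0$, we have $b\ge0$. Variation of constants then gives $w(t,x)\ge w_0(x)e^{-\int_0^t a}\ge0$. A standard continuation argument (or truncation of the nonlinearity by $w^+$, followed by a check that the truncated solution is the true one) makes this rigorous.
\item[(b)] Upper bound. At a point where $w=1$, we get $\partial_t w=-\tau-p\int c\,w<0$. A Gronwall comparison with $1$ then yields $w\le1$.
\end{itemize}
Hence $w$ stays in $\Delta_0$, so it remains bounded in $L^\infty$, and the maximal solution is global.

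\textbf{Step 2: entering $\Delta_p$.} Set $z=w-(1-p)$. On the set where $z(t,x)>0$, the colonization term $-z\int c\,w$ is nonpositive, so
\[\partial_t z=-\tau w - z\int c\, w\le -\tau(x)\,w(t,x)\le -\tau_{\min}(1-p(x)).\]
This bound only holds where $z>0$. Combined with the invariance of $\{w\le 1-p\}$ pointwise (at $z=0$ one has $\partial_t z=-\tau(1-p)<0$), a point $x$ that has entered $\{z\le0\}$ never leaves it. Points outside $A$ start in this set. For $x\in A$, the excess $z$ decreases at least at rate $\tau_{\min}(1-p(x))$. Since $z_0(x)\le p(x)$, the point $x$ leaves $A$ by time
\[T(x)\le \frac{p(x)}{\tau_{\min}(1-p(x))}.\]
We could alternatively use the sharper estimate $w\le w_0e^{-\tau t}$, which gives $T(x)=\tau(x)^{-1}\ln\bigl(w_0(x)/(1-p(x))\bigr)$.

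\textbf{Main obstacle.} We need a bound on $T(x)$ that is uniform in $x$. The natural choice is $T_{A,w_0}=\sup_{x\in A}\tau(x)^{-1}\ln\bigl(1/(1-p(x))\bigr)$. This is finite provided $p$ is bounded away from $1$, i.e. $\operatorname{ess\,sup}p<1$, which is how I read the hypothesis $0<p<1$. If $p$ is only pointwise $<1$ with $\sup p=1$, the entrance time can blow up. In that case one must either strengthen the hypothesis to this reading, or use continuity of $p$ and $w_0$ on the compact $\Omega$, as the earlier commented remark suggests. Once $T_{A,w_0}$ is finite, $w(t,\cdot)\in\Delta_p$ for all $t\ge T_{A,w_0}$, and the semi-flow $\phi_p$ of \cite[Proposition 2.9]{DDZ22} takes over from that time.
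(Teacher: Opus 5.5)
Your proposal is correct. Its core, Step 2, is the paper's own argument. The paper likewise shows that a point with $w(0,x)\le 1-p(x)$ never crosses $1-p(x)$, because $\partial_t w=-\tau(1-p)<0$ there. It also shows that $\partial_t w\le -\tau w$ on the excess set, which gives the exit time $S_D(x)\le \tau(x)^{-1}\ln\bigl(w_0(x)/(1-p(x))\bigr)$.

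The routes differ around this core. Your Step 1 (Cauchy--Lipschitz in $L^\infty$, nonnegativity by variation of constants with truncation, $w\le 1$, hence a global solution) supplies the well-posedness claimed in the statement, which the paper takes for granted. The paper instead spends its first two steps on \emph{strict} positivity: instantaneous propagation from $\int_\Omega w_0>0$ and $\min c>0$, then a first-hitting-time contradiction using the Duhamel formula. Strict positivity is not part of the stated lemma, so your nonnegativity suffices.

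More significantly, the paper stops at $S_D(x)<\infty$ for each $x$ and never extracts a uniform $T_{A,w_0}$. You point out that this requires $\sup_A w_0/(1-p)<\infty$. That holds when $\|p\|_\infty<1$, or when $p$ is continuous with $p<1$ on the compact $\Omega$, which covers the later applications such as $p=(1+\alpha)\bar u$ or constant $p$. This fills a genuine gap in the paper's proof. The uniform bound really can fail otherwise: on the excess set $\partial_t w\ge -(\tau_{\max}+c_{\max})w$, so the exit time is at least $\ln\bigl(w_0(x)/(1-p(x))\bigr)/(\tau_{\max}+c_{\max})$, which is unbounded if $w_0\equiv 1$ and $\sup p=1$.
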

\begin{proof}
%\madeleine{Dans nos hypothèses, on peut avoir $p(x) = 1$, en quel cas il y a de petits bugs aux étapes 1 (pas de colonisation possible en $x$) et 3 ($p(x) = 1$ donne $S_D(w) \leq \infty$); prendre $p \in (0,1)$?}

\textbf{Instantaneous propagation of the population.}
Let us consider an initial condition $w_0\in\Delta_0$ such that $\int_\Omega w_0(x) dx>0$. Since $c$ is continuous and positive on $\Omega$ compact, we have that for all $x\in\Omega$ 
\[\int_\Omega c(x,y)w_0(y)dy\ge  \min_{x,y\in\Omega ^2} c(x,y) \int_\Omega w_0(y)dy> 0.\]
%\textcolor{red}{Manon : ici ce n'est pas vraiment une question de continuité, c'est plutôt que l'on demande que $C(x,y)\ge c_{min}>0$}

As a consequence, if $x\in\Omega$ is such that $w_0(x)=0$, then 
\[ \partial_t w(0,x) = (1-p(x))\int_\Omega c(x,y)w_0(y)dy >0\]
and thus instantaneously the population is positive everywhere.

\smallskip

\noindent\textbf{Positivity of the solutions.}
We now want to verify that for any $t>0$ and $x\in\Omega$, $w(t,x)>0$. From the first step, we can assume that $w_0>0$.
The difficulty lies in the fact that we no longer assume that $w_0\le 1-p$.
For any $x\in\Omega$ and $s\ge0$, we have from \eqref{eq:monospecific-ide}
\begin{align*}
    (\partial_t w(s,x)+\tau(x)w(s,x))e^{\tau(x) s} = e^{\tau(x) s}(1-p(x)-w(s,x))\int_\Omega c(x,y) w(s,y)dy.
\end{align*}
For $0\le  t$, we deduce by integration that 
\begin{equation}
\label{eq:expr_mono}
w(t,x)  =w(0,x) e^{-\tau(x)t} + \int_0^t e^{-\tau(x)(t-s)}(1-p(x)-w(s,x))\int_\Omega c(x,y) w(s,y)dy ds.
\end{equation}

Let us assume by contradiction that
\[t_0=\inf\{ t\ge0, \exists x\in\Omega,  w(t,x)=0\},\] is finite, and denote by $x_0\in\Omega$ the associated position.
From \eqref{eq:expr_mono}, we obtain that 
\[0=w(0,x_0) e^{-\tau(x)t_0} + \int_0^{t_0} e^{-\tau(x_0)(t_0-s)}(1-p(x_0)-w(s,x_0))\int_\Omega c(x_0,y) w(s,y)dy ds.\]
This leads to a contradiction if for all $0\le s\le t_0$, $w(s,x_0)\le 1-p(x_0)$ as both terms of the right hand side are positive.
\\Otherwise, since $t\mapsto w(t,x_0)$ is continuous, we can define $s_0<t_0$ being the last time before $t_0$ such that $w(s,x_0)\ge 1-p(x_0)$. In that case, considering \eqref{eq:expr_mono} on $[s_0,t_0]$ gives the contradiction.\smallskip

\noindent\textbf{Finite time absorption in $\Delta_p$.}
Let us introduce the sets 
\[D_p(t)=\{x\in\Omega, w(t,x)>1-p(x)\}.\]

Let us first prove that if $x\notin D_p(0)$ then for all $t\ge0$, $x\notin D_p(t)$. By contradiction, let us assume that $t_p=\inf\{t\ge0, x\in D_p(t)\}$ is finite, then there exists an interval of the form $(t_p-\varepsilon,t_p]$ on which $t\mapsto w(t,x)$ increases. But by definition
$$\partial_t w(t_p,x) = -\tau(x) w(x,t_p) = -\tau(x)(1-p(x))<0, $$ 
which gives the contradiction.\\

Let us now consider $x\in D_p(0)$ and denote by 
\[S_D(x)=\inf\{ t\ge0, w(t,x)\notin D_p(t)\},\]
with the convention that $S_D(x)=\infty$ if $w(x,t)\le 1-p(x), \forall t\ge0$.\\ By definition for any $t\le S_D(x)$, we have $1-p(x)-w(t,x)\le 0$ and since $\int_\Omega c(x,y)w(t,y)dy \ge0$, we deduce that 
$$\partial_t w(t,x)\le -\tau(x) w(t,x),$$ which leads to 
$$w(t,x)\le w_0\exp(-\tau(x)t).$$
As a consequence \[S_D(x) \le \frac{1}{\tau(x)}\ln\left(\frac{w_0(x)}{1-p(x)}\right)<\infty.\]
\end{proof}

\paragraph{Comparison results.} 
We are interested in establishing monotonicity-type results of the semi-flow and persistence criterion in $p$, as this will be useful to control the metacommunity trajectories by monospecific ones. 
\begin{lemma}
\label{lem:r-comparison}
Let $p, q \in L^\infty(\Omega, [0,1])$ such that $p \leq q$.% and the set $\{x \in \Omega : p(x) = 1\}$ is Lebesgue-negligible. 
\begin{enumerate}[label=\alph*)]
    \item The spectral radii of the modified first generation operator introduced in  \eqref{eq:def_T} and \eqref{eq:def_S} satisfy \begin{equation} 
    r(\mathcal{T}_{c/\tau} \circ \mathcal{S}_p) \geq r(\mathcal{T}_{c/\tau} \circ \mathcal{S}_q), 
\end{equation}
\item For any $t > 0$ and any initial condition in $w_0 \in \Delta_{q}$, 
\begin{equation}
    \phi_p(t, w_0, \tau, c) \geq \phi_q(t, w_0, \tau, c).
\end{equation}
\end{enumerate}
\end{lemma}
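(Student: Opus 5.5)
For part a), I will argue by positivity and monotonicity of the spectral radius. Since $c>0$ and $\tau>0$ are continuous on the compact set $\Omega$, the operator $\mathcal{T}_{c/\tau}$ is a positive, compact integral operator on $L^\infty(\Omega)$, with a continuous kernel. Because $p\le q$, we have $1-q\le 1-p$ pointwise. Hence for every $g\ge0$,
\[
\mathcal{T}_{c/\tau}\circ\mathcal{S}_q(g)\le \mathcal{T}_{c/\tau}\circ\mathcal{S}_p(g).
\]
So the difference $\mathcal{T}_{c/\tau}\circ\mathcal{S}_p-\mathcal{T}_{c/\tau}\circ\mathcal{S}_q$ is a positive operator.

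For positive operators $0\le A\le B$ on a Banach lattice, $0\le A^n g\le B^n g$ for all $n$. Since $L^\infty$ is a Banach lattice with monotone norm, this gives $\|A^n\|\le\|B^n\|$. Gelfand's formula $r(A)=\lim\|A^n\|^{1/n}$ then yields $r(A)\le r(B)$, which is the claim. I will state the lattice fact explicitly: for $\|g\|_\infty\le1$ we have $|A^n g|\le A^n|g|\le B^n|g|$. No irreducibility argument is needed for the weak inequality.

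For part b), I will use a comparison principle for the semi-flow. First observe that $\Delta_q\subset\Delta_p$, so both solutions $w_p=\phi_p(\cdot,w_0,\tau,c)$ and $w_q=\phi_q(\cdot,w_0,\tau,c)$ are well defined and stay in their respective invariant sets. Then $w_q$ is a subsolution of the $p$-equation:
\[
\partial_t w_q=-\tau w_q+(1-q-w_q)\int c\,w_q\le -\tau w_q+(1-p-w_q)\int c\,w_q ,
\]
because $\int c\,w_q\ge0$ and $1-q\le1-p$.

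Set $z=w_q-w_p$. Subtracting the two equations, I get an inequality of the form
\[
\partial_t z\le -\Big(\tau+\int c\,w_p\Big)z+(1-p-w_q)\int c\,z .
\]
Here $\int c\,w_p\ge0$ and $0\le 1-p-w_q\le1$, since $w_q\le 1-q\le 1-p$. To conclude, I will show that $z^+=\max(z,0)$ stays $0$. Multiply by the integrating factor and use Duhamel's formula, as in \eqref{eq:expr_mono}. This gives
\[
z^+(t,x)\le \int_0^t \|c\|_\infty \int_\Omega z^+(s,y)\,dy\,ds .
\]
Gronwall's lemma applied to $\sup_x z^+(t,x)$, with $z^+(0)=0$, then gives $z^+\equiv0$, hence $w_p\ge w_q$.

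An alternative is to cite the monotonicity of the semi-flow from \cite{DDZ22} and treat the $p$-term as a perturbation. I prefer the direct Gronwall argument because it is self-contained.

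The main obstacle is the step of handling the nonlocal term. The coefficient $(1-p-w_q)$ multiplying $\int c\,z$ could in principle be negative, which would break the estimate. It is not, precisely because $w_q\in\Delta_q\subset\Delta_p$ by invariance. The bound $\int c\,z\le\int c\,z^+$ then closes the Gronwall argument, using the nonnegativity of $c$.
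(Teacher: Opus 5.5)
Your proof is correct, and part b) takes a different and more careful route than the paper.

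\textbf{Part a).} You use the same mechanism as the paper: $\mathcal{T}_{c/\tau}\circ\mathcal{S}_q\le\mathcal{T}_{c/\tau}\circ\mathcal{S}_p$ as positive operators, so the spectral radii are ordered. The paper cites Marek's Theorem 4.2 for monotonicity of the spectral radius on a cone. You instead prove it from $|A^n g|\le A^n|g|\le B^n|g|$ and Gelfand's formula, which makes the argument self-contained. Compactness of $\mathcal{T}_{c/\tau}$ plays no role there.

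\textbf{Part b).} The paper argues in one line that $p\le q$ and the equation give $\partial_t\phi_p(t,w_0)(x)\ge\partial_t\phi_q(t,w_0)(x)$ for all $t$ and $x$. This compares the two vector fields at \emph{different} states, so it is only justified where the two solutions coincide. The pointwise-in-time derivative inequality can in fact fail. Take $c\equiv1$, $\tau\equiv0.1$, $p=0$, $q=0.1$, $w_0=0.01$; the equation reduces to two logistic ODEs. At $t=8$ one finds $\partial_t\phi_q\approx0.066>\partial_t\phi_p\approx0.047$, even though $\phi_p>\phi_q$.

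What is really needed is a comparison principle that exploits cooperativity, and your argument supplies it. In the equation for $z=w_q-w_p$, the nonlocal coefficient is $1-p-w_q\in[0,1]$ by invariance of $\Delta_q\subset\Delta_p$, and the kernel satisfies $c\ge0$. This gives $z^+(t,x)\le\|c\|_\infty\int_0^t\int_\Omega z^+(s,y)\,dy\,ds$, and Gronwall yields $z^+\equiv0$. The paper's version buys brevity; yours buys an actual proof of the ordering.

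The remaining points to tidy are routine. When arguing pointwise in $x$, work with the integral form of the $L^\infty$-valued ODE, or with an a.e.\ representative. Apply Gronwall to a quantity that is clearly continuous in $t$, for example $t\mapsto\int_\Omega z^+(t,y)\,dy$; integrating your pointwise bound over $\Omega$ closes the argument directly.
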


\begin{proof} 
\noindent\textbf{Proof of $a)$.} By definition, the operators $A = \mathcal{T}_{c/\tau} \circ \mathcal{S}_p$ and $B = \mathcal{T}_{c/\tau} \circ \mathcal{S}_q$ are positive with respect to the cone $\mathfrak{K} = \{g \in L^{\infty}(\Omega, \R) : g \geq 0\}$ as $g \in \mathfrak{K} \implies (Ag \in \mathfrak{K} \text{ and } Bg \in \mathfrak{K})$. Since $p \leq q$, the operator $A - B$ also is positive with respect to $\mathfrak{K}$. Hence a) follows from Theorem 4.2 in \cite{marekFrobeniusTheoryPositive1970}. 
    
\noindent\textbf{Proof of $b)$.} For ease of notation, we let $\phi_p(t, w_0) = \phi_p(t, w_0, \tau, c)$ and define $\phi_q(t, w_0)$ analogously. By definition, we have $\phi_p(0, w_0) = \phi_q(0, w_0) = w_0$. As $w_0 \in \Delta_q \subset \Delta_p$, thus from the positive invariance, we deduce that for all $t\ge 0$, $\phi_p(t,w_0) \in\Delta_p$ and $\phi_q(t,w_0)\in\Delta_q$. Since $p \leq q$, it follows from Equation \eqref{eq:monospecific-ide} that 
\[\forall t \geq 0, \forall x \in \Omega, \quad \partial_t \phi_p(t, w_0)(x) \geq \partial_t \phi_q(t, w_0)(x).  \]
This concludes the proof.
\end{proof}

Finally, we require the following continuity result. 

\begin{lemma}
\label{lem:r-cont}
Let $p \in L^{\infty}(\Omega, [0,1])$ such that $\|p\|_\infty < 1$, and fix $\bar{\alpha} > 0$ satisfying $\bar{\alpha} p < 1$. Then the application $\alpha \mapsto r(\mathcal{T}_{c/\tau} \circ \mathcal{S}_{\alpha p})$ is continuous on $[0, \bar{\alpha}]$.
\end{lemma}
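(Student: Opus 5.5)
The plan is to reduce continuity of the spectral radius to continuity of the operators in operator norm, plus a compactness property that makes the spectral radius continuous at these operators. For $\alpha \in [0,\bar\alpha]$ write $A_\alpha = \mathcal{T}_{c/\tau} \circ \mathcal{S}_{\alpha p}$. Then
\[ A_\alpha g(x) = \int_\Omega \frac{c(x,y)(1-\alpha p(y))}{\tau(y)} g(y)\,dy . \]
Since $\mathcal{S}_{\alpha p} - \mathcal{S}_{\beta p} = (\beta-\alpha)\,\mathcal{S}_{p}'$ with $\mathcal{S}_p' g = p g$, we get
\[ \|A_\alpha - A_\beta\| \le |\alpha-\beta|\,\|p\|_\infty \,|\Omega|\, \frac{\|c\|_\infty}{\min \tau}, \]
using $c$ continuous on the compact $\Omega^2$ and $\tau$ continuous and positive. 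Hence $\alpha \mapsto A_\alpha$ is Lipschitz in operator norm on $L^\infty$.

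The spectral radius is only upper semicontinuous in general, so norm continuity alone is not enough. The second step is therefore to show that each $A_\alpha$ is compact on $L^\infty(\Omega)$. The kernel $k_\alpha(x,y) = c(x,y)(1-\alpha p(y))/\tau(y)$ is bounded, and uniform continuity of $c$ in $x$ gives
\[ |A_\alpha g(x) - A_\alpha g(x')| \le \|g\|_\infty \int_\Omega \frac{|c(x,y)-c(x',y)|}{\tau(y)}\,dy . \]
So $A_\alpha$ maps the unit ball of $L^\infty$ into a bounded equicontinuous subset of $\mathcal{C}(\Omega)$, which is relatively compact by Arzelà–Ascoli. For compact operators the nonzero spectrum consists of isolated eigenvalues of finite multiplicity. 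By Newburgh's theorem on continuity of the spectrum, or directly by upper semicontinuity of finite spectral sets together with Rouché-type stability of isolated eigenvalues under Riesz projections, the map $A \mapsto r(A)$ is continuous at every compact operator along norm-convergent families of compact operators.

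It remains to combine the two steps. Upper semicontinuity of $\alpha \mapsto r(A_\alpha)$ holds in general. For lower semicontinuity at $\alpha_0$, there are two cases. If $r(A_{\alpha_0}) = 0$ there is nothing to prove. Otherwise $r(A_{\alpha_0}) > 0$, and in fact it is an eigenvalue by Krein–Rutman: $A_{\alpha_0}$ is positive, compact and even strongly positive since $c>0$ and $1-\alpha_0 p \ge 1-\bar\alpha p >0$, so it has a positive eigenvector. This eigenvalue is isolated with finite-dimensional Riesz projection $P_{\alpha_0}$. The Riesz projections $P_\alpha$ over a small circle around it depend continuously on $\alpha$, so for $\alpha$ close to $\alpha_0$ the operator $A_\alpha$ has spectrum inside that circle, giving $r(A_\alpha)\ge r(A_{\alpha_0})-\varepsilon$.

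The main obstacle is this lower semicontinuity, namely the justification that spectrum cannot collapse. I would handle it by citing the classical result that the spectral radius is continuous on the set of compact operators (Newburgh), after verifying compactness as above. Alternatively, and more elementarily, a monotonicity argument can replace it. Lemma \ref{lem:r-comparison} a) shows $\alpha \mapsto r(A_\alpha)$ is nonincreasing. Moreover, for $\beta > \alpha$ we have $1-\beta p \ge \frac{1-\bar\alpha\|p\|_\infty}{1-\alpha p}\,(1-\alpha p)\cdot$ a factor tending to $1$ as $\beta\to\alpha$; precisely, $A_\beta \ge \lambda(\alpha,\beta) A_\alpha$ with
\[ \lambda(\alpha,\beta)=\inf_y \frac{1-\beta p(y)}{1-\alpha p(y)} \to 1 , \]
and symmetrically $A_\beta \le \lambda' A_\alpha$. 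Positivity, as in \cite{marekFrobeniusTheoryPositive1970}, then gives $\lambda\, r(A_\alpha) \le r(A_\beta) \le \lambda' r(A_\alpha)$, which yields continuity directly. I would in fact favour this sandwich argument, since it only uses $\bar\alpha p<1$ and the monotonicity result already invoked.
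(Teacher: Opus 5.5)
Your proposal is correct, and it contains two arguments. The first one is essentially the paper's proof in different packaging. It uses Lipschitz dependence of $\alpha\mapsto A_\alpha$ in operator norm, compactness of $A_{\alpha_0}$ via Arzel\`a--Ascoli, upper semicontinuity of $r$, and persistence, through Riesz projections, of an isolated eigenvalue of modulus $r(A_{\alpha_0})$. The paper instead verifies collective compactness of $(\mathcal{A}_n)$ and strong convergence, and then quotes \cite[Lemma 2.1]{delmasEffectiveReproductionNumber2024}. Its estimate is uniform over $\|g\|_\infty\le 1$, so it actually proves norm convergence, and then compactness of the limit alone suffices, as you observe. Krein--Rutman is superfluous here: a compact operator with $r>0$ automatically has an eigenvalue of modulus $r$.

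The sandwich argument you favour is genuinely different. From $\frac{1-\beta p}{1-\alpha p}=1-\frac{(\beta-\alpha)p}{1-\alpha p}$ you get $(1-\varepsilon)A_\alpha\le A_\beta\le(1+\varepsilon)A_\alpha$ in the order of positive operators, with $\varepsilon=|\beta-\alpha|\,\|p\|_\infty/(1-\bar\alpha\|p\|_\infty)$. Hence $|r(A_\beta)-r(A_\alpha)|\le\varepsilon\, r(A_\alpha)$, by the same monotonicity result \cite{marekFrobeniusTheoryPositive1970} used in Lemma~\ref{lem:r-comparison}. This is more elementary and quantitative (a Lipschitz bound). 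It uses neither compactness, nor continuity of $c$, nor spectral perturbation theory, only nonnegativity of the kernel.

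Its price is that it needs $1-\bar\alpha p$ bounded away from $0$, i.e.\ $\bar\alpha\|p\|_\infty<1$, not merely $\bar\alpha p<1$ as you claim. If $\bar\alpha>1$ and $\mathrm{ess\,sup}\,\bar\alpha p=1$, then $\inf_y\frac{1-\bar\alpha p(y)}{1-\beta p(y)}=0$ for every $\beta<\bar\alpha$. The sandwich then says nothing about left-continuity at $\bar\alpha$. The compactness route (yours or the paper's) still works in that case, and does not even use positivity of $1-\alpha p$. In the paper's applications $p$ is a constant or a multiple of the continuous $\bar u$, $\bar v$, so the uniform condition holds and your preferred argument suffices there.
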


\begin{proof}
    Let $\alpha \in [0, \bar{\alpha}]$ and consider a sequence $(\alpha_n)_{n \geq 0}$ taking values in $[0, \bar{\alpha}]$ such that $\lim_n \alpha_n = \alpha $. For ease of notation, we let $\mathcal{A}_\alpha = \mathcal{T}_{c/\tau} \circ \mathcal{S}_{\alpha p}$ and $\mathcal{A}_n = \mathcal{T}_{c/\tau} \circ \mathcal{S}_{\alpha_n p}$. According to \cite[Lemma 2.1]{delmasEffectiveReproductionNumber2024}, if the family of linear operators $(\mathcal{A}_n, n \geq 0)$ is collectively compact and converges strongly to $\mathcal{A}$, then  
    \[ \lim_{n \to \infty} r(\mathcal{A}_n) = r(\mathcal{A}_\alpha).\] 
    It thus suffices to establish the desired properties of $(\mathcal{A}_n, n \geq 0)$.
    \smallskip
    
    \noindent \textbf{Collective compactness.} Recall that $(\mathcal{A}_n, n \geq 0)$ is said to be compact if the family
    \[ K = \{ \mathcal{A}_n g : n \geq 0, g \in L^\infty(\Omega, \R) : \|g\|_\infty \leq 1  \} \] 
    is relatively compact in $L^\infty(\Omega, \R)$.

    Let $\epsilon > 0$. By uniform continuity of $c$ on $\Omega^2$,
    \[\exists \eta > 0 : \|(x_1,y_1) - (x_2,y_2) \|_\infty < \eta \implies |c(x_1,y_1) - c(x_2,y_2)| < \frac{\epsilon}{\|\tau^{-1}\|_\infty}. \]
    Consider a family of disjoint sets $\{E_k, k = 1, \dots, N \}$ of diameter at most $\eta$ which satisfies $\cup_{k=1}^N E_k = \Omega$. Let $f \in K$ and $k \in \{1, \dots, N\}$. By definition, there exists $n \geq 0$ and $g \in L^{\infty}(\Omega, \R)$ with $\|g\|_\infty \leq 1$ such that $f = \mathcal{A}_n g$. Since $\alpha_n p \leq 1$, it follows that for any $x,z \in E_k$:
    \[|f(x) - f(z)| \leq \int_\Omega \frac{|c(x,y) - c(z,y)|}{\tau(y)}(1 - \alpha_n p(y))|g(y)| dy \leq \epsilon. \]
    Hence 
    \[ \forall k \in \{1, \dots, N\}, \forall f \in K, \quad \sup_{x,z \in E_k} |f(x) - f(z)| \leq \epsilon. \]
    Thus, $K$ is 
    %quasi-uniformly-equi-Lebesgue-measurable, which implies that it is 
    relatively compact in $L^\infty(\Omega, \R)$ \cite{cherkasCompactnessSpaces1970}. 
    \smallskip 
    
    \noindent \textbf{Strong convergence.}
    For any $g \in L^{\infty}(\Omega, \R)$, for any $n \geq 0$ and $x \in \Omega$,
    \[|\mathcal{A}_n g(x) - \mathcal{A}_\alpha g(x)| = \Big| \int_\Omega (\alpha_n - \alpha) \frac{c(x,y)}{\tau(y)} g(y) dy \Big| \leq ||c||_\infty \|g \tau^{-1}\|_\infty |\alpha_n - \alpha|. \]
    Hence, $\lim_{n \to \infty} \|\mathcal{A}_n g - \mathcal{A}_\alpha g \|_\infty = 0$, and thus the family $(\mathcal{A}_n, n \geq 0)$ converges strongly to $\mathcal{A}_\alpha$.
    This concludes the proof.
    
\end{proof}

\subsection{Proof of extinction}
\label{sec:proof_ext}
The proof of extinction relies on a comparison of the two populations $u$ and $v$ with monospecific dynamics. In order to simplify notations, we will denote by 
\[\phi_{u,p}(t,u_0) := \phi_p(t,u_0,\tau,c),\]
the semi-flow associated with \eqref{eq:monospecific-ide}, and by
\[ \phi_{v,p}(t,v_0):=\phi_p(t,v_0,\sigma,\gamma),\]
the semi-flow associated with \eqref{eq:monospecific-ide} when the extinction rate $\tau$ is replaced by $\sigma$ and the dispersion kernel $c$ by $\gamma$. Furthermore, we also use the notations $\phi_u = \phi_{u,0}$ and $\phi_v=\phi_{v,0}$.

We can now state a key comparison lemma.
\begin{lemma}[]
\label{lem:comparison}
Consider two functions $M$ and $\delta$ in $L^\infty(\Omega,[0,1])$ %\manon{? est-ce le bon espace ?} \madeleine{Je crois que oui, je ne vois pas de raison de demander mieux pour la preuve, et c'est assez large pour couvrir les cas qui nous intéressent}
, such that $M(x)>\delta(x)$, $\forall x\in\Omega$. 
Let us assume furthermore that for all $x\in\Omega$ and $t\ge0$,  $u(t,x)\in[\delta(x), M(x)]$. Then we can bound the second population $v$ as 
\[\phi_{v, \delta}(t,v_0)(x) \ge v(t,x) \ge \phi_{v,M}(t,v_0)(x),\quad \forall x\in\Omega, \forall t\ge0. \]
A similar result holds when $u$  is replaced by $v$ and conversely.
\end{lemma}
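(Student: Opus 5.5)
The plan is a comparison argument for scalar integro-differential equations with a cooperative (quasi-monotone) nonlinearity. Fix the trajectory $u$, which by hypothesis satisfies $\delta\le u(t,\cdot)\le M$. Then $v$ solves the non-autonomous monospecific equation
\[
\partial_t v(t,x) = -\sigma(x)v(t,x) + \big(1-u(t,x)-v(t,x)\big)\int_\Omega \gamma(x,y)v(t,y)\,dy ,
\]
which has the same form as \eqref{eq:monospecific-ide}, except that $p$ is replaced by the time-dependent function $u(t,\cdot)$. Writing $F_p(w)(x) = -\sigma(x)w(x)+(1-p(x)-w(x))\int_\Omega\gamma(x,y)w(y)dy$, the key observations are the following.

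First, $F_p(w)$ is nonincreasing in $p$ pointwise as long as $\int\gamma w\ge 0$. Second, $F_p$ is quasi-monotone in $w$: if $w_1\le w_2$ and $w_1(x)=w_2(x)$, then $F_p(w_1)(x)\le F_p(w_2)(x)$, because the local terms coincide and $(1-p(x)-w(x))\ge 0$ multiplies a larger integral. The second claim needs $w(x)\le 1-p(x)$ at the contact point. Since $v\ge0$ and $\int\gamma v\ge 0$, we get $F_M(v)\le \partial_t v \le F_\delta(v)$. So $v$ is a supersolution of the $M$-equation and a subsolution of the $\delta$-equation, all with the same initial datum $v_0$.

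Step 1 (upper bound). Let $\bar w=\phi_{v,\delta}(t,v_0)$; by Lemma \ref{lemma:con_ini} it is well defined for $v_0\in\Delta_0$. I will show that $z=\bar w - v$ stays nonnegative. Compute $\partial_t z = F_\delta(\bar w)-F_\delta(v) + [F_\delta(v)-\partial_t v]$, where the bracket is $\ge 0$. Expand the difference:
\[
F_\delta(\bar w)-F_\delta(v) = -\big(\sigma + \textstyle\int\gamma\bar w\big)z + (1-\delta-v)\int\gamma z .
\]
This is linear in $z$, of the form $\partial_t z \ge -a(t,x) z + b(t,x)\int\gamma z$ with $a$ bounded and $b=1-\delta-v\ge 1-u-v\ge 0$; here I use $u\ge\delta$ and the invariance $u+v\le1$ from Theorem \ref{thm:lln}. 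Since $\gamma\ge0$ and $b\ge0$, a standard Gronwall/variation-of-constants argument gives the result. One way is to set $z_-=\max(-z,0)$ and derive $\frac{d}{dt}\|z_-(t)\|_\infty \le C\|z_-(t)\|_\infty$ using the Duhamel form as in \eqref{eq:expr_mono}. Another is to perturb the initial datum by $\varepsilon$ and argue at the first touching time. Either way $z_-\equiv0$.

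Step 2 (lower bound). Let $\underline w=\phi_{v,M}(t,v_0)$ and $z=v-\underline w$. Then $\partial_t z\ge F_M(v)-F_M(\underline w)$, and linearizing in the same way gives the coefficient $(1-M-\underline w)$ in front of $\int\gamma z$. Here is the main obstacle: this coefficient need not be nonnegative if $v_0\notin\Delta_M$. To handle it I would instead expand around $v$, writing
\[
F_M(v)-F_M(\underline w) = -\big(\sigma+\textstyle\int\gamma v\big)z + (1-M-\underline w)\int\gamma z ,
\]
and use Lemma \ref{lemma:con_ini}, which keeps $\underline w$ positive and eventually in $\Delta_M$. Alternatively I would choose whichever symmetric splitting puts $(1-M-\underline w)$ or $(1-u-v)$ in front of the integral term. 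If a sign issue persists on an initial layer where $\underline w>1-M$, I would add the assumption $v_0\le 1-M$. This is the natural setting of the later proofs, where Lemma \ref{lemma:con_ini} lets us wait until absorption. Under that assumption the linear comparison of Step 1 applies verbatim and gives $z\ge0$.

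Finally, the symmetric statement with $u$ and $v$ exchanged follows by the identical argument with $(\tau,c)$ in place of $(\sigma,\gamma)$.
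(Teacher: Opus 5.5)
Your Step 1 is correct, and it is a more careful version of the paper's own proof, which only writes the differential inequality and says that "integrating" it gives the bound, with the lower bound "similar". The gap is in Step 2. You never prove $v\ge\phi_{v,M}(t,v_0)$ for the data the lemma allows; your fallback adds the hypothesis $v_0\le 1-M$, which changes the statement. Within your framework the step cannot close. Once you replace $\partial_t v$ by its lower bound $F_M(v)$, the only coefficients available in front of $\int\gamma z$ in $F_M(v)-F_M(\underline w)$ are $1-M-\underline w$ and $1-M-v$. Neither has a sign: $v$ is only bounded by $1-u$, which exceeds $1-M$ wherever $u<M$. So no choice of splitting of that expression works, and the quasi-monotonicity of the $M$-equation genuinely fails above $1-M$.

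"Waiting for absorption" does not rescue this. Lemma \ref{lemma:con_ini} absorbs $\phi_{v,M}$ into $\Delta_M$, not $v$, and $v$ may never go below $1-M$. Moreover, the paper uses the lemma in Step 3 of the proof of Theorem \ref{theo:persist} with datum $v(T_\alpha,\cdot)$, which is not known to lie below $1-(1+\alpha)\bar u$.

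The missing idea is to keep the true equation for $v$ instead of discarding the slack $(M-u)\int\gamma v$. Write $\Gamma w(x)=\int_\Omega \gamma(x,y)w(y)\,dy$ and $z=v-\underline w$ with $\underline w=\phi_{v,M}(t,v_0)$. A direct computation gives the exact identity
\[
\partial_t z = -\big(\sigma+\Gamma\underline w\big)\,z + (1-u-v)\,\Gamma z + (M-u)\,\Gamma\underline w .
\]
Here $1-u-v\ge 0$ by the invariance $u+v\le 1$ of the two-species system. Also $(M-u)\Gamma\underline w\ge 0$, because $u\le M$ and $\underline w\ge 0$; the positivity step of Lemma \ref{lemma:con_ini} holds for any datum in $\Delta_0$. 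Your Gronwall argument on $z_-$ from Step 1 then applies verbatim with $z(0)=0$. It yields $v\ge\phi_{v,M}(t,v_0)$ with no restriction on $v_0$. This is the "$(1-u-v)$ in front" option you mention but do not carry out. The paper's one-line proof does not spell this point out either, so it is the step you should write explicitly.
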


\begin{proof}
The proof relies on simple inequalities. We first focus on obtaining the upper bound. 
Let us recall that for all $x\in\Omega$
\begin{align*}
    \partial_t v(t,x) & = -\tau v(t,x) + (1-u(t,x)-v(t,x))\int v(t,y)c(x,y) dy\\
    &\le  -\tau v(t,x) + (1-\delta -v(t,x))\int v(t,y)c(x,y) dy.
\end{align*}
By integrating the inequality, we then obtain that 
\[ v(t,x) \le \phi_{v,\delta} (t,v_0)(x).\]
The lower bound derives from a similar argument.

\end{proof}

This lemma allows to establish Theorem \ref{theo:extin}.

\begin{proof}
[Proof of Theorem \ref{theo:extin}]
From Lemma \ref{lem:comparison} we obtain that since $v \ge0$, then 
\begin{equation}
    \label{eq:maj_u}
    0 \leq u(t,.) \le \phi_u(t,u_0).
\end{equation}
Similarly, since $u \ge 0$ as well, we have
\begin{equation}
    \label{eq:maj_v}
    0 \leq v(t,.) \le \phi_v(t,v_0).
\end{equation}
The result thus follows from Theorem \ref{theo:mono} since both $\phi_u$ and $\phi_v$ converge uniformly to $0$.

\end{proof}

\subsection{Proof of persistence}
\label{sec:proof_coexist}

\begin{proof}[Proof of Theorem \ref{theo:persist}] The proof is divided in several steps.\\

\noindent\textbf{Step 1} Let us first highlight that combining Lemma \ref{lem:r-comparison} with our assumption, we obtain that 
$$r(\mathcal T_u) \ge r(\mathcal T_u \circ S_{\bar v}) \ge 1.$$
As a consequence, Theorem \ref{theo:mono} (iii) shows that there exists $\bar{u} \in \mathcal{C}(\Omega, (0,1))$ such that 
\begin{equation}
\label{eq:uniform-cvgce}
    \lim_{t \to \infty} \| \phi_{u}(t, u_0) - \bar{u} \|_\infty = 0.
\end{equation} 

\noindent\textbf{Step 2} Since $v \geq 0$, we know from Lemma \ref{lem:comparison} that for any $t \geq 0$ and $x \in \Omega$, $u(t,x) \leq \phi_u(t, u_0)$. It thus follows from the uniform convergence in Equation \eqref{eq:uniform-cvgce} that 
\begin{equation} 
\label{eq:Talpha}
    \forall \alpha > 0, \exists T_\alpha >0 : \forall x \in \Omega, \quad u(t,x) \leq (1 + \alpha) \bar{u}(x).
\end{equation}
In addition, recall from Theorem \ref{theo:mono} (iii) that $\bar{u}$ is continuous and satisfies $\bar{u} < 1$. Thus, there exists $\alpha_0$ such that 
\[ \forall \alpha \leq \alpha_0, \forall x \in \Omega, \quad (1 + \alpha) \bar{u}(x) < 1. \]

\noindent\textbf{Step 3} Since $r(\mathcal{T}_v \circ \mathcal{S}_{\bar{u}}) > 0$, it follows from Lemma \ref{lem:r-cont} that we may pick $\alpha \leq \alpha_0$ small enough to ensure that 
\[ r(\mathcal{T}_v \circ \mathcal{S}_{(1 + \alpha)\bar{u}}) > 0. \]
Considering the associated $T_\alpha$ as defined in \eqref{eq:Talpha}, it follows again from Lemma \ref{lem:comparison} that 
\begin{equation}
\label{eq:minorer-v}
    \forall t \geq T_\alpha, \forall x \in \Omega, \quad v(t,x) \geq  \phi_{v, (1 + \alpha) \bar{u}} (t - T_\alpha, v(T_\alpha, \cdot)) (x).
\end{equation}  

Notice that $v(T_\alpha, x) > 0$ for any $x \in \Omega$. Indeed, Equation \eqref{eq:ide-system_dim2} ensures that for any $t \geq 0$ and $x \in \Omega$,
\[ \partial_t v(t,x) \geq -\| \tau\|_\infty v(t,x), \]
and thus $v(t,x) \geq \mathrm{e}^{-\|\tau\|_\infty t} v_0(x) > 0$. 

Hence, Theorem \ref{theo:mono} implies the existence of $\tilde{v} > 0$ such that 
\[ \lim_{t \to \infty} \phi_{v, (1 + \alpha)\bar{u}}(t - T_\alpha, v(T_\alpha, \cdot)) = \tilde{v} > 0.  \]
As $\bar{u}$ is continuous, $\tilde{v}$ also is continuous according to Theorem \ref{theo:mono} (iii). Thus Equation \eqref{eq:minorer-v} yields the existence of $\epsilon > 0$, independent from $v(T_\alpha, \cdot)$ and thus $(u_0, v_0)$, such that 
\[ \liminf_{t \to \infty} v \geq \epsilon > 0. \]
As the roles of $u$ and $v$ are interchangeable, this concludes.
\end{proof} 
% \manon{Il faut noter ici que $\epsilon$ est indépendant de la condition initiale. Reformuler correctement le théorème et mettre le terme de uniform persistence et la référence au bouquin de Smith and Thieme.}

\subsection{Proof of single species extinction}

\begin{proof}[Proof of Theorem \ref{thm:single-extinct}]
Since $v \geq 0$, Lemma \ref{lem:comparison} ensures that 
\[0 \leq u(t, \cdot) \leq \phi_u(t, u_0). \]
Hence, since we assume that $r(\mathcal T_u)\le 1$ 
\begin{equation}
\label{eq:aux-cvunif}
    \|u(t,\cdot)\|_\infty \leq \|\phi_u(t,u_0)\|_\infty \xrightarrow[t \to \infty]{} 0.
\end{equation} 
Further, our assumption that $r(\mathcal T_v) > 1$ combined with Lemma \ref{lem:r-cont} ensures that there exists $\alpha_0 > 0$ such that, $r(\mathcal{T}_v \circ \mathcal{S}_{\alpha}) > 1$ for any $\alpha \leq \alpha_0$. Let $\alpha \in (0, \alpha_0)$. It follows from Equation \eqref{eq:aux-cvunif} that there exists $T_\alpha \geq 0$ such that for any $t \geq T_\alpha$, 
\[ \| u(t,\cdot)  \|_\infty \leq \alpha. \]
Proceeding as in Step 3 of the Proof of Theorem \ref{theo:persist}, we obtain that for any $t \geq T_\alpha$,
\[ v(t, \cdot) \geq \phi_{v , \alpha}(t - T_\alpha, v(T_\alpha, \cdot)) \xrightarrow[t \to \infty]{} \bar{w}_\alpha, \]
with $\bar{w}_\alpha$ the unique monospecific equilibrium of $v$ for $p(x) = \alpha$.
As a consequence, it follows that 
\[ \forall \alpha \in (0, \alpha_0), \forall x \in \Omega, \quad \liminf_{t \to \infty} v(t,x) \geq \bar{w}_\alpha(x). \]

Notice that Lemma \ref{lem:comparison} implies that for any $x \in \Omega$, the application $\alpha \mapsto \bar{w}_\alpha(x)$ is decreasing on $(0, \alpha_0)$. As $\bar{w}_\alpha$ is bounded, this ensures that $\bar{w}_0(x) = \lim_{\alpha \to 0+} \bar{w}_\alpha(x)$ is well defined for any $x \in \Omega$. Further, recall that $\bar{w}_\alpha$ satisfies 
\[\forall x \in \Omega, \quad 0 = -\sigma(x) \bar{w}_\alpha(x) + (1 - \alpha - \bar{w}_\alpha(x)) \int_\Omega \gamma(x,y) \bar{w}_\alpha(y) dy.  \]
Thus, letting $\alpha$ go to zero leads to 
\[\forall x \in \Omega, \quad 0 = -\sigma(x) \bar{w}_0(x) + (1 - \bar{w}_0(x)) \int_\Omega \gamma(x,y) \bar{w}_0(y) dy.  \]
As a consequence, Theorem \ref{theo:mono} $(iii)$ implies that $\bar{w}_0 = \bar{v}$. Thus 
\[\forall x \in \Omega, \quad \liminf_{t \to \infty} v(t,x) \geq \bar{v}(x). \]

Finally, since $u \geq 0$, Lemma \ref{lem:comparison} implies that $v(t, \cdot) \leq \phi_v(t, u_0)$, whence
\[ \forall x \in \Omega, \quad \limsup_{t \to \infty} v(t,x) \leq \bar{v}(x). \]
This concludes the proof.
\end{proof}

\subsection{Proof of bistability}

\begin{proof}
[Proof of Proposition \ref{prop:bistable}]
Let us first assume that the initial conditions satisfy 
$|\bar u - u_0| \le \delta \bar u$ and $|v_0|\le \eta$ for $\delta,\eta>0$ to be set latter. We define a stopping time 
$$\tau=inf\{t\ge0,   u(t,\cdot) < (1 - \delta)\bar u $$% \text{ and } v(t,\cdot) > \eta\}$$
Our goal is to prove that for a good choice of $\delta,\eta$, then $\tau=+\infty$.

Before $\tau$, we can compare the solutions $u$ and $v$ with monomorphic populations. For the population $v$, we have 
$$v\le \phi_{v, (1 - \delta)\bar u}.$$
Moreover, from Lemma \ref{lem:r-cont},  $r(\mathcal T_v \circ S_{(1 - \delta)\bar u}) <1$ if $\delta $ is small enough. In that case, $\phi_{v, (1 - \delta)\bar u}$ is decreasing and converges to $0$. In particular, before time $\tau$, $v\le \phi_{v, (1 - \delta)\bar u}< v_0 \le \eta$.

For population $u$, we have 
$$\phi_{u,\eta} \le u ,$$
and choosing $\eta$ small enough leads to $r(\mathcal T_u \circ S_{\eta})>1$. As a consequence $\phi_{u,\eta}$ increases toward $\bar u$ as $t\to\infty$.

From this two bounds, we deduce that necessarly $\tau=+\infty$ and furthermore $v\to0$. 
To prove the convergence of $u$ let us recall that since $v\ge0$, we always have $u\le \phi_u$ which converges to $\bar u$ and leads to the conclusion using the squeeze theorem.

\end{proof}
%Objectif  : Montrer que $\tau=+\infty$ si les conditions initiales, $\delta$ et $\eta$ sont bien choisis.

%Rem :  on a toujours $u \le \phi_u$ qui converge vers $\bar u$.

%Extinction of $v$$$v\le \phi_{v, (1 - \delta)\bar u}$$  and we want $r(\mathcal T_v \circ S_{(1 - \delta)\bar u}) <1$ which is true if $\delta $ small by Lemma \ref{lem:r-cont}.Then $\phi_{v, (1 - \delta)\bar u}$ is decreasing and converges to $0$. Note that we prove that $v\le v_0$.

%Convergence of $u$$$\phi_{u,\eta} \le u $$Si $\eta$ petit $r(\mathcal T_u \circ S_{\eta}>1$, et donc $\phi_{u,\eta}$ converge de façon monotone vers $\bar u$, donc elle ne peut pas franchir le seuil.

%Donc c'est bon.

\section{Proof for the harlequin model}
\label{sec:proofs_hq}

In this section, we handle the case of a discrete space $\Omega=\Omega_A\sqcup \Omega_N$ and consider solutions $(\ub,\vb)=(u_A,u_N,v_A,v_N)$ to \eqref{eq:ode-system}. %\manon{Du point de vue des notations, je me demanque si on a pas interet à souligner l'aspect bi-dimentionnel en mettant en gras ou avec une flèche comme un vecteur $\mathbf{u} = (u_A,u_N)$ ou bien $\overrightarrow{u}=(u_A,u_N)$. Tu en penses quoi ?\\ }

The proof strategy is very similar to the continuous framework, using comparison between the two species system and the monospecific one. We will use the same notations for the solutions of the monospecific systems, but adapted to the reduce space. Namely we denote by $\Phi_u(t, \ub^0)= (\phi_{u,A}(t, \ub^0),\phi_{u,N}(t, \ub^0))$ the solution of 
\begin{equation}
\label{eq:monoEDO}
    \left\{\begin{aligned}
    w_A' &= -\tau_A w_A + (p_A - w_A) (c_{AA}w_A + c_{AN} w_N), \\
    w_N' &= -\tau_N w_N + (p_N - w_N) (c_{NA}w_A + c_{NN} w_N). \\
    & (w_A(0),w_N(0)) =  \ub^0.
    \end{aligned}\right.
\end{equation}
and $\Phi_v(t, \ub^0)$ the solution when parameters $(\tau_A,\tau_N)$ are changed to $(\sigma_A,\sigma_N)$ and $(c_{AA}, c_{AN},c_{NA}, c_{NN})$ to $(\gamma_{AA}, \gamma_{AN},\gamma_{NA}, \gamma_{NN})$.  

%\manon{Si on veut transposer notre méthode de preuve au cas d'espace discret il faut \begin{itemize}
%    \item vérifier que les solutions ont un sens si on démarre au dessus de $p_X$
%    \item pouvoir comparer les rayons spectraux des matrices
%    \item avec un résultat de continuité de l'équilibre également en fonction des paramètres $p_X$ au moins.
%end{itemize}}

\begin{proof}[Proof of Theorem \ref{thm:persistence_ODE}]
\noindent\textbf{Proof of $(i)$ --}
As for the continuous case, we notice that since $\ub,\vb\ge0$, we have $\forall t\ge0$ 
\begin{equation}
    \label{eq:upperbound_mono}
    \ub(t)\le \Phi_u(t, \ub^0)\qquad \text{ and } \qquad \vb(t)\le \Phi_v(t,\vb^0).
\end{equation} We deduce the result from Theorem \ref{thm:Laj} since both $ \Phi_u(t, \ub^0)$ and $ \Phi_v(t,\vb^0)$ converge to $(0,0)$ as $t \to\infty$.\\
\noindent\textbf{Proof of $(ii)$ --} 
Assume that $s(M_u(0))\le0$, then for any $ \ub^0>0$, $\Phi_u(\ub^0,t)\to(0,0)$ as $t\to\infty$. Using the same upper-bound we easily obtain that $\ub(t) \to (0,0)$ as well. For the second population, since $s(M_v(0))>0$, using the continuity of the spectral bound with respect to the entries of the matrix $M$, there exists $\varepsilon>0$ small enough such that $s(M_v(\varepsilon))>0$. Furthermore, from the convergence of $u$ to $0$, there exists a time $T_\varepsilon$ such that for all $t\ge T_\varepsilon$, $\ub(t)\le \varepsilon$. Therefore for all $t\ge T_\varepsilon$, and $X\in\{A,N\}$ 
\begin{align*}
    \frac{d}{dt} v_X & = -\sigma_X v_X + (p_A-u_X-v_X) (\gamma_{AX} v_A+\gamma_{NX}v_N)\\
   & \ge -\sigma_X v_X + (p_A-\varepsilon-v_X) (\gamma_{AX} v_A+\gamma_{NX}v_N)
\end{align*}
By integration, we obtain that for all $t\ge0$, $$\vb(t+T_\varepsilon)\ge \Phi_v^\varepsilon(t+T_\varepsilon, v(T_\varepsilon))$$ where $\Phi_v^\varepsilon$ is the flow of the monomorphic system where the proportion of agricultural and natural space are changed to $p_X+\varepsilon$, for $X\in\{A,N\}$.
Since $s(M_v(\varepsilon))$ and $\vb(T_\varepsilon)>0$, $\Phi_v^\varepsilon$ converges to a positive equilibrium $\bar \vb^\varepsilon$. As a consequence 
\[\liminf_{t\to\infty } \vb(t) \ge \bar \vb^\varepsilon,\]
and from \eqref{eq:upperbound_mono}
\[ \bar \vb \ge \limsup_{t\to\infty } \vb(t).\]
The conclusion of the proof is obtained by letting $\varepsilon\to0$.\\
\noindent\textbf{Proof of $(iii)$} Once again, the main idea is similar to the continuous space case. 
From \eqref{eq:upperbound_mono}, and our assumption, we deduce that for any $\alpha>0$ there exists a time $T_\alpha$ such that  for all $t\ge T_\alpha$, and $X\in\{A,N\}$
\[ u_X(t)\le \bar u_X (1+\alpha),\quad \text{and} \quad v_X(t) \le \bar v_X (1+\alpha) .\]
As a consequence we will lower bound $\ub$ and $\vb$ by solutions of the monomorphic system with modified values of $p_A,p_N$. 
More precisely, let us consider $\Phi_u^\alpha$ the solution of the dynamical system 
\begin{equation*} 
    \left\{\begin{aligned}
    (w_A^\alpha)' &= -\tau_A w_A^\alpha + (p_A -\bar v_A(1+\alpha) - w_A^\alpha) (c_{AA}w_A^\alpha + c_{AN} w_N^\alpha), \\
    (w_N^\alpha)' &= -\tau_N w_N^\alpha + (p_N -\bar v_N(1+\alpha) - w_N^\alpha) (c_{NA}w_A^\alpha + c_{NN} w_N^\alpha). \\
    & (w_A^\alpha(0),w_N^\alpha(0)) = u(T_\alpha).
    \end{aligned}\right.
\end{equation*}
Then for all $t\ge T_\alpha$
$$\ub(t-T_\alpha) \ge \Phi_u^\alpha(t).$$
A similar construction provides a solution of the monomorphic system $\Phi_v^\alpha$ such that
\[\vb(t-T_\alpha)\ge \Phi_v^\alpha(t),\quad \forall t\ge T_\alpha.\]
Now, we can choose $\alpha$ small enough such that both $s(M_u(\bar \vb (1+\alpha)))>0$ and $s(M_v(\bar \ub (1+\alpha)))>0$, which ensures that $$\Phi_u^\alpha(t) \longrightarrow_{t\to\infty} \bar{\ub}^\alpha>0,$$ and $$\Phi_v^\alpha(t) \longrightarrow_{t\to\infty} \bar{\vb}^\alpha>0.$$
From this we obtain the persistence of the populations, \textit{i.e.} there exists $\varepsilon>0$ such that for any initial condition $\ub_0 + \vb_0 \le 1$, 
$$\liminf_{t\to\infty} \ub(t) \ge\varepsilon,\quad \text{and}\quad \liminf_{t\to\infty} \vb(t) \ge\varepsilon.$$
\noindent\textbf{Proof of $(iv)$} The proof follows exactly the same reasoning as in the continuous setting and its adaptation is left to the reader.
\end{proof}

It remains to establish the existence of a coexistence equilibrium. 

\begin{proof}[Proof of Theorem \ref{thm:exist_eq_ODE}]
The proof relies on the construction of a compact convex invariant by the dynamical system. Indeed, \cite[Lemma 4.1]{lajmanovich_deterministic_1976} guarantees that this is a sufficient condition for the existence of an equilibrium within this invariant set.

The construction of the invariant set relies on comparison with the monospecific system and additional properties of its vector-field. Let us consider the solution $\Phi_u$ of \eqref{eq:monoEDO} and denote by $F$ the associated vector field such that
$$\nabla \Phi_u(t, u^0) = F(\Phi_u(t, u^0)).$$ 
We obtain, using the formalism of \cite{ducrot_differential_2022}, that the vector field is positive and monotonous (\cite{ducrot_differential_2022} Theorem 7.1 for positivity and Theorem 8.23 for monotonicity). These properties are similar to the cooperativeness obtained for the integro-differential system in \cite{DDZ22}. In particular, we deduce from Corrolary 8.5 in \cite{ducrot_differential_2022}, that the solutions are non decreasing as soon as the initial condition $u^0$ satisfies $ F(u^0)>0$ and non increasing if $ F(u^0)<0$. Combining this with Theorem \ref{thm:Laj}, we deduce that the rectangle $[0, \bar u_A]\times [0, \bar u_N]$ is positively invariant by the monomorphic dynamics. \\
Using the coupling $\eqref{eq:upperbound_mono}$, we deduce that for any initial condition $(u^0, v^0) \in [0, \bar u_A]\times [0, \bar u_N]\times [0, \bar v_A]\times [0, \bar v_N]$, the solutions of the dimorphic system remains in this set. 

We now aim at constructing a positive lower bound for each coordinate. Similarly as for the persistence, we deduce that for any $t\ge0$, $\ub(t)\ge \Phi_u^0(t, u^0)$ where $\Phi_u^0$ is solution to 
\begin{equation*}
    \left\{\begin{aligned}
    (w^0_A)' &= -\tau_A w^0_A + (p_A -\bar v_A - w^0_A) (c_{AA}w^0_A + c_{AN}w^0_N), \\
    (w^0_N)' &= -\tau_N w^0_N+ (p_N -\bar v_N - w^0_N) (c_{NA}w^0_A + c_{NN} w^0_N). \\
    &\Phi_u^0(0, u^0)=u^0
    \end{aligned}\right.
\end{equation*}
From our assumptions, since $s(M_u(\bar v))>0$ the solution $\Phi_u^0(t, u^0)$ converges as $t\to\infty$ towards a positive equilibrium $\bar{u}^0$. Furthermore, from the monotonicity of the solutions, if the initial condition is below $\bar{u}^0$, then the solution $\Phi_u^0(t, u^0)$ is non decreasing. 
As a consequence, with a symmetrical argument for the $\vb$ populations, we deduce that there exists $ \eta>0$ such that $[\eta, \bar u_A]\times [\eta, \bar u_N] \times [\eta, \bar v_A]\times [\eta, \bar v_N]$ is positively invariant for the two dimensional system, which concludes the proof.
 
\end{proof}
\appendix
\section{Proof of the graphon approximation}
\label{app:preuve_graphon}
This section is devoted to the proof of Theorem \ref{thm:lln}.
The proof proceeds in several steps. We start by showing that the sequence of distributions of $(\eta^K)_{K \geq 1}$ is C-tight in $\mathbb{D}(\R_+, \mathcal{M}_1(E))$, which means that it is tight and its adherence values are almost surely continuous. Second, we establish that all adherence values almost surely satisfy a particular deterministic measure-valued equation. By showing that the solution $\zeta$ to the latter is unique, we conclude that $(\eta^K)_{K \geq 1}$ converges in probability to $\zeta$. Finally, we prove that $\zeta$ indeed is defined as in Equation \eqref{eq:def-zeta}, which ends the argument. 
\smallskip

Let us start with some preliminary computations that will be useful throughout the section. We will work with the semimartingale decomposition of $\eta^K$, for $K > 0$ fixed. For $i \in \{0,1\}$, let 
\[\widetilde{Q}_i =  {Q}_i - \mu_i\]
be the compensated martingale-measure associated to $Q_i$. With this notation, it follows that for any $t \geq 0$ and $f \in \mathcal{B}_b(E, \R)$, 
\[ \angles{\eta^K_t}{f} = M^K_t(f) + V^K_t(f), \]
where the martingale and bounded variation parts are respectively defined by 
\begin{equation*}
%\label{eq:semimgle-m}
\begin{aligned}
    M^K_t&(f) = \frac{1}{K} \int_0^t \int_{E_0} \setind{\theta \leq \tau_{w_k(s-)}(x_k)} \big(f(x_k, 0) - f(x_k, w_k(s-))   \big)\widetilde{Q}_0(ds, dk, d\theta) \\ 
    & + \frac{1}{K} \int_0^t \int_{E_1} \setind{\theta \leq K^{-1} c_{w_k(s-), w_\ell(s-)}(x_k, x_\ell)} \big(f(x_\ell, w_k(s-)) - f(x_\ell, w_\ell(s-))  \big)\widetilde{Q}_1(ds, dk,d\ell, d\theta), 
\end{aligned}
\end{equation*}
and 
\begin{equation}
\label{eq:semimgle-v}
\begin{aligned}
    V^K_t&(f) = \angles{\eta^K_0}{f} + \frac{1}{K} \sum_{k=1}^K \int_0^t \tau_{w_k(s)} \big(f(x_k,0) - f(x_k, w_k(s)) \big) ds \\
    & + \frac{1}{K^2} \sum_{k, \ell = 1}^K \int_0^t c_{w_k(s), w_\ell(s)}(x, y) \big( f(x_\ell, w_k(s)) - f(x_\ell, w_\ell(s)) \big) ds.
\end{aligned}
\end{equation}
We start with a brief technical lemma.

\begin{lemma}
\label{lemma:crochet}
    Let $f \in \mathcal{B}_b(E, \R)$, $t \geq 0$ and $K \geq 1$. Under Assumption \ref{hyp:jump-rates}, $M^K_t(f)$ is a square-integrable martingale, whose quadratic variation is given by 
    \begin{equation}
    \label{eq:crochet}
    \begin{aligned}
        \langle M^K(f) \rangle_t &= \frac{1}{K^2}\sum_{k=1}^K \int_0^t \tau_{w_k(s)} \big(f(x_k,0) - f(x_k, w_k(s)) \big)^2 ds \\
    & + \frac{1}{K^3} \sum_{k, \ell = 1}^K \int_0^t c_{w_k(s), w_\ell(s)}(x, y) \big( f(x_\ell, w_k(s)) - f(x_\ell, w_\ell(s))^2 \big) ds.
    \end{aligned}
    \end{equation}
\end{lemma}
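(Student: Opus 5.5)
The plan is to use the standard theory of stochastic integrals against compensated Poisson point measures, as in \cite{fournier_microscopic_2004}. Square-integrability will follow from bounded integrands and bounded intensities, and the predictable quadratic variation will then be read off as the integral of the squared integrand against the intensity measures $\mu_0$ and $\mu_1$. The sum of the two contributions will give the claimed formula.

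\textbf{Step 1 (boundedness).} Since $\Omega$ is compact, Assumption \ref{hyp:jump-rates} makes $\tau_w$ and $c_{w,w'}$ bounded for each $w,w'$; the finiteness of $\mathcal{S}$ handles $\tau_w$ if only positivity is assumed, and otherwise one restricts to the finitely many positions $x_1,\dots,x_K$, which is enough for fixed $K$. Set
\[
\bar\tau=\max_{k,w}\tau_w(x_k), \qquad \bar c=\max_{k,\ell,w,w'}c_{w,w'}(x_k,x_\ell).
\]
The integrand against $\widetilde Q_0$ is predictable, bounded by $2\|f\|_\infty$, and supported on $\{\theta\le\bar\tau\}$. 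Hence, for every $t$,
\[
\E\int_0^t\int_{E_0}\big|\text{integrand}\big|^2\,\mu_0(ds,dk,d\theta)\le 4\|f\|_\infty^2 K\bar\tau t<\infty .
\]
The same argument bounds the $\widetilde Q_1$ integrand, with $4\|f\|_\infty^2K^2\bar c K^{-1}t$. Both stochastic integrals are therefore square-integrable martingales, and so is their sum $M^K_t(f)$.

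\textbf{Step 2 (bracket).} The measures $Q_0$ and $Q_1$ are independent and have no common atoms almost surely, so the cross term vanishes. For each piece, the predictable quadratic variation is $\int_0^t\int|h|^2\,d\mu_i$. For $Q_0$, the prefactor $K^{-1}$ squared gives $K^{-2}$. Summing over $k$ and integrating $\setind{\theta\le\tau_{w_k(s-)}(x_k)}$ in $\theta$ produces $\tau_{w_k(s)}(x_k)$, and $s-$ may be replaced by $s$ since there are countably many jumps and they form a Lebesgue-null set. This yields the first line of \eqref{eq:crochet}. For $Q_1$, the prefactor gives $K^{-2}$ again, and integrating the indicator in $\theta$ yields $K^{-1}c_{w_k(s),w_\ell(s)}(x_k,x_\ell)$. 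The total factor $K^{-3}$ gives the second line of \eqref{eq:crochet}, read as $c(x_k,x_\ell)$ with the square applied to the whole difference.

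\textbf{Main obstacle.} There is no real obstacle here. The only points requiring care are the predictability of the integrands, which follows from using left limits $w_k(s-)$, and the orthogonality of the two martingale parts, which follows from the independence of $Q_0$ and $Q_1$.
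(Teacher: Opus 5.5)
Your proposal is correct and follows essentially the same route as the paper: you bound the expected integral of the squared predictable integrand against the intensities $\mu_0,\mu_1$ to get square-integrability, then read off $\langle M^K(f)\rangle_t$ as that integral. Your extra care goes slightly beyond the paper's proof: you bound the rates only at the finitely many points $x_k$, you note the orthogonality of the $\widetilde Q_0$ and $\widetilde Q_1$ parts, and you correct the typos $c(x,y)\to c(x_k,x_\ell)$ and the misplaced square, all of which the paper leaves implicit.
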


\begin{proof}
 Let $f \in \mathcal{B}_b(E, \R)$, $t \geq 0$ and $K \geq 1$. We have 
\begin{equation*}
\begin{aligned}
    \E&[ \langle M^K(f) \rangle_t] = \E\left[\int_0^t \int_{E_0}  \left(\frac{1}{K} \setind{\theta \leq \tau_{w_k(s)}(x_k)} \big(f(x_k, 0) - f(x_k, w_k(ss))   \big) \right)^2 \mu_0(ds, dk, d\theta) \right] \\
    & + \E \left[\int_0^t \int_{E_1} \left( \frac{1}{K} \setind{\theta \leq K^{-1} c_{w_k(s), w_\ell(s)}(x_k, x_\ell)} \big(f(x_\ell, w_k(s)) - f(x_\ell, w_\ell(s))  \big) \right)^2  \mu_1(ds, dk, d\ell, d\theta) \right] \\
    &= \E \left[ \frac{1}{K^2}\sum_{k=1}^K \int_0^t \tau_{w_k(s)} \big(f(x_k,0) - f(x_k, w_k(s)) \big)^2 ds \right] \\
    & + \E \left[ \frac{1}{K^3} \sum_{k, \ell = 1}^K \int_0^t c_{w_k(s), w_\ell(s)}(x, y) \big( f(x_\ell, w_k(s)) - f(x_\ell, w_\ell(s))\big)^2 ds \right] \\
    & \leq \frac{C}{K} \|f \|_\infty t,
\end{aligned}   
\end{equation*}
where 
\begin{equation}
\label{eq:aux-defC}
    C = 4 \Big(\max_{w \in \bbrackets{1}{S}} \|\tau_w \|_\infty + \max_{w,w' \in \bbrackets{1}{S}} \|c_{w,w'} \|_\infty \Big) < +\infty,
\end{equation}
which is a finite constant thanks to Assumption \ref{hyp:jump-rates} and compacity of $\Omega$.

Thus $M^K_t(f)$ is square-integrable, with quadratic variation given by
\begin{equation*}
\begin{aligned}
    \langle & M^K(f) \rangle_t = \int_0^t \int_{E_0}  \left(\frac{1}{K} \setind{\theta \leq \tau_{w_k(s)}(x_k)} \big(f(x_k, 0) - f(x_k, w_k(ss))   \big) \right)^2 \mu_0(ds, dk, d\theta) \\
    & + \int_0^t \int_{E_1} \left( \frac{1}{K} \setind{\theta \leq K^{-1} c_{w_k(s), w_\ell(s)}(x_k, x_\ell)} \big(f(x_\ell, w_k(s)) - f(x_\ell, w_\ell(s))  \big) \right)^2  \mu_1(ds, dk, d\ell, d\theta),
\end{aligned}
\end{equation*}
leading to Equation \eqref{eq:crochet}.
\end{proof}
\smallskip

We are now ready to establish the desired tightness result.

\begin{prop}
    Under Assumption \ref{hyp:jump-rates}, the sequence of distributions of $(\eta^K)_{K \geq 1}$ is C-tight in $\mathbb{D}(\R_+, \mathcal{M}_1(E))$.
\end{prop}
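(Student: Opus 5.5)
We follow the standard route for measure-valued processes (as in \cite{delmas_individual-based_2024}, based on Roelly's criterion and Aldous--Rebolledo). Since $E = \Omega \times \bbrackets{0}{S}$ is compact, $\mathcal{M}_1(E)$ is compact for the weak topology, so the compact containment condition is automatic. By Roelly's criterion, C-tightness of $(\eta^K)$ in $\mathbb{D}(\R_+, \mathcal{M}_1(E))$ reduces to C-tightness in $\mathbb{D}(\R_+, \R)$ of the real processes $(\angles{\eta^K}{f})_{K\ge1}$ for every $f$ in a countable dense subset of $\mathcal{C}(E,\R)$ containing the constant function $1$. For each such $f$ we use the semimartingale decomposition $\angles{\eta^K_t}{f} = M^K_t(f) + V^K_t(f)$ introduced above.

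First, for fixed $t$, the laws of $\angles{\eta^K_t}{f}$ are tight, since $|\angles{\eta^K_t}{f}| \le \|f\|_\infty$ deterministically. Second, we apply the Aldous--Rebolledo criterion: it suffices that for stopping times $S_K \le S_K' \le S_K+\delta$ bounded by $T$, both $\E|V^K_{S_K'}(f)-V^K_{S_K}(f)|$ and $\E|\langle M^K(f)\rangle_{S_K'}-\langle M^K(f)\rangle_{S_K}|$ are small uniformly in $K$ as $\delta\to0$. From \eqref{eq:semimgle-v}, the integrands of the finite-variation part are bounded by $C\|f\|_\infty$, with $C$ as in \eqref{eq:aux-defC}; the $K^{-2}\sum_{k,\ell}$ normalisation exactly compensates the $K^2$ pairs. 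Hence $|V^K_{S_K'}(f)-V^K_{S_K}(f)| \le C\|f\|_\infty \delta$. Similarly, Lemma \ref{lemma:crochet} gives $|\langle M^K(f)\rangle_{S_K'}-\langle M^K(f)\rangle_{S_K}| \le C\|f\|_\infty^2\delta/K$. This yields tightness in $\mathbb{D}$.

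For the continuity of the limit points, we bound the jumps: each jump of $\eta^K$ moves a single atom of mass $1/K$, so
\[\sup_{t}\big|\Delta\angles{\eta^K_t}{f}\big| \le \frac{2\|f\|_\infty}{K} \xrightarrow[K\to\infty]{} 0\]
almost surely. By the classical criterion (e.g.\ Jacod--Shiryaev, Prop.\ VI.3.26), tightness together with vanishing maximal jumps gives C-tightness of each real process. Roelly's criterion then lifts this to $\mathcal{M}_1(E)$. Moreover, by Doob's inequality, $\E[\sup_{t\le T}|M^K_t(f)|^2]\le 4C\|f\|_\infty^2T/K\to0$, which will be useful for identifying the limit.

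The main point needing care is the uniform bound on the colonisation term. It requires $\sup_{w,w'}\|c_{w,w'}\|_\infty<\infty$ and $\sup_w\|\tau_w\|_\infty<\infty$. For $c$ this follows from continuity on the compact set $\Omega^2$ (Assumption \ref{hyp:jump-rates}). For $\tau_w$, positivity alone does not give boundedness, so we must also assume, or read into the assumption, that $\tau_w$ is bounded (e.g.\ continuous); this is already implicitly used in \eqref{eq:aux-defC}. Everything else is routine.
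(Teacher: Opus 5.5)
Your proof is correct and follows essentially the same route as the paper's: compact containment is automatic since $E$ is compact, C-tightness of each projection $\langle \eta^K, f\rangle$ comes from the Aldous--Rebolledo criterion via uniform bounds on the increments of $V^K(f)$ and $\langle M^K(f)\rangle$, and continuity of the limit points follows from the $O(1/K)$ jump bound and Jacod--Shiryaev VI.3.26. The paper uses Perkins' projection criterion (Bolthausen et al., Thm.\ II.4.1) instead of Roelly's, which is equivalent here; your caveat that boundedness of $\tau_w$ is not actually guaranteed by Assumption \ref{hyp:jump-rates} as stated is well taken, since the paper silently relies on it in \eqref{eq:aux-defC}.
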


\begin{proof}
Following \cite{delmas_individual-based_2024}, we aim at establishing tightness using the following criterion from \cite[Theorem II.4.1]{bolthausenLecturesProbabilityTheory2002}. Recall that a set of functions $\mathcal{D} \subset \C_b(E, \R_+)$ is separating if for any measures $\mu, \nu \in \mathcal{M}_1(E)$,
\[(\forall f, \angles{\mu}{f} = \angles{\nu}{f}) \implies \mu = \nu. \]
Then in order to establish that the sequence $(\eta^K)_{K \geq 1}$ is C-tight in $\mathbb{D}(\R_+, \mathcal{M}_1(E))$, it suffices to show that the following two conditions hold:
    \begin{enumerate}
        \item \emph{Compact containment}. For any $T \geq 0$ and $\varepsilon > 0$, there exists a compact subset $K_{T, \varepsilon} \subseteq E$ such that 
        \[\sup_{K \geq 1} \P \left( \sup_{t \leq T} \eta^K_t(K^C_{T, \varepsilon}) > \varepsilon \right) < \varepsilon. \]
        \item \emph{Tightness of projections.} There exists a separating set $\mathcal{D}$ containing the constant functions such that for any $f \in \mathcal{D}$, the sequence of processes $(\angles{\eta^K_\cdot}{f})_{K \geq 1}$ is C-tight in $\mathbb{D}(\R_+, \R)$.
    \end{enumerate}
Notice that compact containment is always satisfied using $K_{T, \varepsilon} = E$, since $E$ is compact itself. It thus only remains to establish C-tightness of projections. 
\smallskip

Let $f \in \mathcal{C}_b(E, \R)$. We start by showing tightness of $(\angles{\eta^K_\cdot}{f})_{K \geq 1}$ in $\mathbb{D}(\R_+, \R)$. According to the Aldous-Rebolledo criterion \cite{aldousStoppingTimesTightness1978,joffeWeakConvergenceSequences1986}, it is enough to show that:
\begin{enumerate}[label=(\alph*)]
    \item For any time $t$ belonging to a dense subset of $\R_+$, the sequences $(\langle M^K(f) \rangle_t)_{K \geq 1}$ and $(V^K_t(f))_{K \geq 1}$ are tight.
    \item For any $T \geq 0$, for any $\varepsilon, \alpha > 0$, there exists $\delta > 0$ and $K_0 \geq 1$ such that for any two sequences of stopping times $(S_K)_{K \geq 1}$ and $(T_K)_{K \geq 1}$ satisfying $S_K \leq T_K \leq T$ for all integers $K$, 
    \begin{equation*}
    \begin{aligned}
        \sup_{K \geq K_0} \P \left( |\langle M^K(f) \rangle_{T_K} - \langle M^K(f) \rangle_{S_K} | \geq \alpha, \; T_K \leq S_K + \delta \right) \leq \varepsilon, \\
        \text{and } \sup_{K \geq K_0} \P \left( | V^K_{T_K}(f) - V^K_{S_K}(f) | \geq \alpha, \; T_K \leq S_K + \delta \right) \leq \varepsilon.
    \end{aligned}
    \end{equation*}
\end{enumerate}
Let us check that both conditions are satisfied. First, consider any $t > 0$. Notice that it follows from the proof of Lemma \ref{lemma:crochet} and Equation \eqref{eq:aux-defC} that there exists $C > 0$ (independent from $K$, $t$ and $f$) such that
\[ \sup_{K \geq 1} \E[| \langle M^K(f) \rangle|_t] < \frac{C}{K}\|f\|_\infty t < \infty.\]
Similarly, letting 
\[ C' = 2 \Big(\max_{w \in \bbrackets{1}{S}} \|\tau_w \|_\infty + \max_{w,w' \in \bbrackets{1}{S}} \|c_{w,w'} \|_\infty \Big), \]
it follows from Equation \eqref{eq:semimgle-v} that 
\[ \sup_{K \geq 1} \E[|V^K_t(f)] < C'\|f\|_\infty t < \infty. \]
Thus, for any $t \in \R_+$, the sequences $(\langle M^K(f) \rangle_t)_{K \geq 1}$ and $(V^K_t(f))_{K \geq 1}$ are tight, and condition (a) is met.

Fix $T, \varepsilon, \alpha > 0$ and consider now two sequences of stopping times $(S_K)_{K \geq 1}$ and $(T_K)_{K \geq 1}$ as described in (b). Assume that almost surely, there exists some $\delta > 0$ such that for any $K \geq 1$, $T_K \leq S_K + +\delta$. Letting $C$ be defined by Equation \eqref{eq:aux-defC}, we obtain that 
\[
\E[ |\langle M^K(f) \rangle_{T_K} -  \langle M^K(f) \rangle_{S_K}| ] \leq \E[ |T_K - S_K| ] \frac{C}{K} \| f\|_\infty \leq \frac{C}{K} \| f\|_\infty \delta.  
\]
Similarly, using the constant $C'$ defined above, it holds that
\[
\E[|V^K_{T_K}(f) - V^K_{S_K}(f)|] \leq C' \|f\|_\infty \delta.
\]
Using conditional Markov's inequality, it follows that for any $K \geq 1$,
\begin{equation*}
\begin{aligned}
     \P \left( |\langle M^K(f) \rangle_{T_K} - \langle M^K(f) \rangle_{S_K} | \geq \alpha, \; T_K \leq S_K + \delta \right) \leq \frac{C \|f\|_\infty^2 \delta}{K \alpha}, \\
    \P \left( | V^K_{T_K}(f) - V^K_{S_K}(f) | \geq \alpha, \; T_K \leq S_K + \delta \right) \leq \frac{C' \|f\|_\infty \delta}{\alpha}.
\end{aligned}    
\end{equation*}
Thus, choosing first $\delta$ small enough such that $C' \|f\|_\infty \delta/\alpha < \varepsilon$, and second $K_0$ large enough to ensure that $C \|f\|_\infty^2 \delta/(K_0 \alpha) < \varepsilon$ suffices to proof (b). Thus $(\angles{\eta^K_\cdot}{f})_{K \geq 1}$ is tight in $\mathbb{D}(\R_+, \R)$.

Finally, notice that by definition, there exists $c >0$ such that for any $T \geq 0$, the following inequality holds almost surely:  
\[ \sup_{t \leq T} |\angles{\eta^K_t}{f} - \angles{\eta^K_{t-}}{f}| \leq \frac{c}{K}.\]
According to \cite[Proposition VI.3.26]{jacodLimitTheoremsStochastic2003}, the sequence $(\angles{\eta^K_\cdot}{f})_{K \geq 1}$ thus is actually C-tight in $\mathbb{D}(\R_+, \R)$, which concludes the proof.
\end{proof}

Tightness of $(\eta^K)_{K \geq 1}$ in $\mathbb{D}(\R_+, \mathcal{M}_1(E))$ ensures that this sequence admits some adherence values. The next step consists in showing that the latter satisfy some measure-valued equation.

\begin{prop}
\label{prop:identification}
    Under Assumptions \ref{hyp:jump-rates} and \ref{hyp:eta0}, any limiting value $\zeta = (\zeta_t)_{t \geq 0}$ of $(\eta^K)_{K \geq 1}$ in $\mathbb{D}(\R_+, \mathcal{M}_1(E))$ is almost surely solution to the following system of measure-valued equations. For any measurable bounded function $f$, for any $t \geq 0$,
    \begin{equation}
    \label{eq:def-zeta-mv}
    \begin{aligned}
        \angles{\zeta_t}{f} &= \angles{\zeta_0}{f} + \sum_{w = 1}^S \int_0^t \int_\Omega \tau_w(x) \left( f(x,0)-f(x,w) \right)\zeta_s(dx, w) ds \\
        &+ \sum_{w, w' = 0}^S \int_0^t \int_\Omega \int_\Omega c_{w,w'}(x,y) \left( f(y,w)-f(y,w') \right) \zeta_s(dx, w) \zeta_s(dy, w') ds,
    \end{aligned}
    \end{equation}
    with initial condition $\zeta_0$ defined by Equation \eqref{eq:zeta0}.
\end{prop}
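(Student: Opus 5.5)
The plan is to pass to the limit in the semimartingale decomposition $\angles{\eta^K_t}{f} = M^K_t(f) + V^K_t(f)$ along a subsequence $(\eta^{K_n})$ converging in distribution to $\zeta$. By Skorokhod's representation theorem we may assume the convergence is almost sure in $\mathbb{D}(\R_+, \mathcal{M}_1(E))$. Since the limit is continuous (C-tightness), this convergence is in fact locally uniform in time for the weak topology on $\mathcal{M}_1(E)$.

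We first work with $f \in \mathcal{C}_b(E,\R)$; since $E=\Omega\times\bbrackets{0}{S}$ with $\bbrackets{0}{S}$ discrete, this means $f(\cdot,w)$ is continuous for each $w$. Define the functional
\[
\Psi_t(\nu) = \angles{\nu_t}{f} - \angles{\nu_0}{f} - \sum_{w=1}^S\int_0^t\int_\Omega \tau_w(x)\big(f(x,0)-f(x,w)\big)\nu_s(dx,w)\,ds - \sum_{w,w'}\int_0^t\iint c_{w,w'}(x,y)\big(f(y,w)-f(y,w')\big)\nu_s(dx,w)\nu_s(dy,w')\,ds .
\]
From \eqref{eq:semimgle-v}, $V^K_t(f)-\angles{\eta^K_0}{f}$ is exactly the sum of the two integral terms of $\Psi_t$ evaluated at $\eta^K$. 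The double sum over $k,\ell$ divided by $K^2$ is the integral against $\eta^K_s\otimes\eta^K_s$; note that \eqref{eq:semimgle-v} misprints $c(x,y)$ where it should read $c(x_k,x_\ell)$, and the diagonal terms $k=\ell$ contribute zero because $f(x_\ell,w_k)-f(x_\ell,w_\ell)=0$ there. Hence $\Psi_t(\eta^K)=M^K_t(f)$. By Lemma \ref{lemma:crochet} and Doob's inequality, $\E[\sup_{t\le T}|M^K_t(f)|^2]\le 4CT\|f\|_\infty^2/K\to0$.

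Continuity of $\Psi_t$ at continuous paths $\nu$ is then the key step. This uses that $\tau_w$ and $c_{w,w'}$ are continuous on the compact set $\Omega$. Here I will assume $\tau_w$ is continuous; Assumption \ref{hyp:jump-rates} only asserts positivity, so either continuity is read into it or the extinction term is handled by the approximation argument below. The integrand $(x,w,y,w')\mapsto c_{w,w'}(x,y)(f(y,w)-f(y,w'))$ is then bounded and continuous on $E^2$. Weak convergence $\nu^n_s\to\nu_s$ implies $\nu^n_s\otimes\nu^n_s\to\nu_s\otimes\nu_s$, and dominated convergence in $s$ gives convergence of the time integrals. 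It follows that $\Psi_t(\eta^{K_n})\to\Psi_t(\zeta)$ almost surely. Combined with the $L^2$ bound on $M^K$, this yields $\Psi_t(\zeta)=0$ a.s. for all rational $t$, hence for all $t$ by continuity. Assumption \ref{hyp:eta0} identifies $\zeta_0$ through \eqref{eq:zeta0}.

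The remaining obstacle is extending from continuous to bounded measurable $f$. For fixed $t$, both sides of \eqref{eq:def-zeta-mv} are linear in $f$ and continuous under bounded pointwise convergence, by dominated convergence against the finite measures $\zeta_s$ and $\zeta_s\otimes\zeta_s$. The class of $f$ satisfying the identity is therefore closed under bounded monotone limits and contains $\mathcal{C}_b(E,\R)$, and a monotone class theorem extends it to $\mathcal{B}_b(E,\R)$. Finally, to get the identity simultaneously for all $f$ almost surely, I would first fix a countable convergence-determining family of continuous functions, obtain the identity for it on a single null-set complement, and then extend by density and the monotone class argument above.
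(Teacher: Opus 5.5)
Your argument is correct. It takes a different route from the paper in two places: how you pass to the limit, and which test functions you use first.

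\textbf{The paper's route.} The paper works directly with an arbitrary $f\in\mathcal{B}_b(E,\R)$. It shows $\E|\psi_{f,t}(\eta^K)|\to 0$ and uses uniform boundedness (hence uniform integrability) of $\psi_{f,t}(\eta^K)$. It then concludes $\E|\psi_{f,t}(\zeta)|=0$ by the continuous mapping theorem. Continuity of $\psi_{f,t}$ at $\zeta$ is justified by showing that the spatial marginal $\zeta_s(dx\times\bbrackets{0}{S})=\mu(x)dx$ is conserved and absolutely continuous.

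\textbf{Your route.} You first restrict to $f\in\mathcal{C}_b(E,\R)$. There, continuity of $\Psi_t$ at continuous paths follows directly from continuity of $f,\tau_w,c_{w,w'}$ and does not need the marginal at all. You pass to the limit almost surely via Skorokhod's representation. You then extend to bounded measurable $f$ by a pathwise monotone class argument. A countable family that is dense in $\mathcal{C}(E)$ for the sup norm (this is what "extend by density" needs), together with rational times, gives a single null set for all $(f,t)$.

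\textbf{Comparison.} The paper's route is shorter, and it records conservation of the spatial marginal, which is reused immediately afterwards to write $\zeta_t$ through densities $u_i$. However, its continuity claim is too strong for merely measurable $f$. The map $\nu\mapsto\angles{\nu}{f}$ is weakly continuous at $\nu$ only when $\nu$ does not charge the discontinuity set of $f$, and absolute continuity does not guarantee this. For example, take $f(x,w)=\ind_D(x)$ with $D$ a closed nowhere dense set of positive Lebesgue measure, and patch positions chosen in $D^C$. Moreover, the paper's null set depends on $(f,t)$. Your approach handles both points rigorously.

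\textbf{One correction to your hedge on $\tau_w$.} The monotone class step approximates $f$, not $\tau_w$, so it cannot replace continuity of the extinction rates. The term $\frac1K\sum_k\tau_{w_k(s)}(x_k)\big(f(x_k,0)-f(x_k,w_k(s))\big)$ only sees $\tau_w$ at the patch positions. Some regularity is therefore genuinely needed: continuity, or at least a.e.\ continuity combined with the absolutely continuous marginal of $\zeta_s$, as in the paper. Reading continuity into Assumption~\ref{hyp:jump-rates} is the right choice; the paper itself uses continuity of $\tau_w$ in the proof of Proposition~\ref{prop:uniqueness}.
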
 

\begin{proof}
Fix $t  \geq 0$ and $f \in \mathcal{B}_b(E, \R)$. Define the following function on $\mathbb{D}(\R_+, \mathcal{M}_1(E))$: for any $\eta \in \mathbb{D}(\R_+, \mathcal{M}_1(E))$,
\begin{equation*}
\begin{aligned}
    \psi_{f,t}(\eta) & = \angles{\eta_t}{f} - \angles{\eta_0}{f} - + \sum_{w = 1}^S \int_0^t \int_\Omega \tau_w(x) \left( f(x,0)-f(x,w) \right)\eta_s(dx, w) ds \\
        &+ \sum_{w, w' = 0}^S \int_0^t \int_\Omega \int_\Omega c_{w,w'}(x,y) \left( f(y,w)-f(y,w') \right) \eta_s(dx, w) \eta_s(dy, w') ds.
\end{aligned}
\end{equation*}
Recall that, by definition,
\[\eta^K_t(dx, dw) = \frac{1}{K} \sum_{k=1}^{K} \delta_{(x_k, w_k(t))}(dx, dw).\]
It thus follows from the semimartingale decomposition of $\eta^K$, and in particular Equation \eqref{eq:semimgle-v}, that 
\[ \psi_{f,t}(\eta^K) = M^K_t(f). \]
Hence, letting $C$ be the positive constant defined in Equation \eqref{eq:aux-defC}, 
\[\E[|\psi_{f,t}(\eta^K)|]^2 \leq \E[|\psi_{f,t}(\eta^K)|^2] = \E[\langle M^K(f) \rangle_t] \leq \frac{C}{K}\|f\|_\infty t \xrightarrow[K \to \infty]{} 0.\]

Further, one may notice that the sequence $(\psi_{f,t}(\eta^K))_{K \geq 1}$ is uniformly bounded because $\eta^K_t \in \mathcal{M}_1(E)$. Indeed, 
\[ |\psi_{f,t}(\eta^K)| \leq (1 + \max_{w \in \bbrackets{1}{S}} \|\tau_w \|_\infty + \max_{w,w' \in \bbrackets{1}{S}} \|c_{w,w'} \|_\infty) 2\| f \|_\infty t. \]
Hence the sequence $(\psi_{f,t}(\eta^K))_{K \geq 1}$ is uniformly integrable. 

Consider an adherence value $\zeta$ of $(\eta^K)_{K \geq 1}$. Then there exists some subsequence $(\eta^{\phi(K)})_{K \geq 1}$ which converges to $\zeta$ in $\mathbb{D}(\R_+, \mathcal{M}_1(E))$. If it holds that $(\psi_{f,t}(\eta^{\phi(K)}))_{K \geq 1}$ converges in law to $\psi_{f,t}(\zeta)$, then it follows from the uniform integrability of the former that 
\[\E[|\psi_{f,t}(\zeta)|] = \lim_{K \to \infty} \E[|\psi_{f,t}(\eta^{\phi(K)})|] = 0,  \]
which concludes the proof.

It thus only remains to show that $(\psi_{f,t}(\eta^{\phi(K)}))_{K \geq 1}$ converges in law to $\psi_{f,t}(\zeta)$. The continuity hypothesis of Assumption \ref{hyp:jump-rates} ensures that $\psi_{f,t}$ is continuous at any $\eta \in \mathbb{D}(\R_+, \mathcal{M}_1(E))$ such that for any $s \in [0,t]$, the marginal $\eta_s(dx \times \bbrackets{0}{S})$ is absolutely continuous with respect to the Lebesgue measure. 

By construction, for any $K \geq 1$ and $s \geq 0$, it holds that $\eta^K_s(dx \times \bbrackets{0}{S}) = \eta^K_0(dx \times \bbrackets{0}{S})$. Further, Assumption \ref{hyp:eta0} ensures that $(\eta^K_0(dx \times \bbrackets{0}{S}))_{K \geq 1}$ converges in distribution in $\mathcal{M}_1(\Omega)$ to $\mu(x) dx$. Continuity of the application $\nu \mapsto \nu(dx \times \bbrackets{0}{S})$ on $\mathcal{M}_1(E)$, and continuity of the trajectories of $\zeta$ (because of C-tightness) allow to conclude that for any $t \geq 0$, 
\[ \zeta_t(dx \times \bbrackets{0}{S}) = \mu(x) dx. \]
Thus $\psi_{f,t}$ is continuous at $\zeta$, which implies that $(\psi_{f,t}(\eta^{\phi(K)}))_{K \geq 1}$ converges in law to $\psi_{f,t}(\zeta)$. This ends the proof.
\end{proof}

The last step of the tightness-identification-uniqueness argument consists in establishing uniqueness of the adherence values of $(\eta^K)_{K \geq 1}$ in $\mathbb{D}(\R_+, \mathcal{M}_1(E))$, establishing its convergence in $\mathbb{D}(\R_+, \mathcal{M}_1(E))$. 

\begin{prop}
\label{prop:uniqueness}
    Let $\zeta, \overline{\zeta}$ be two solutions of Equation \eqref{eq:def-zeta-mv} starting from the same initial condition. It then holds that 
    \begin{equation*}
       \forall t \geq 0, \quad \|\zeta_t - \overline{\zeta}_t\|_{TV} = 0.
    \end{equation*}
\end{prop}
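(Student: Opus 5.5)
The plan is a Gronwall argument in total variation, exploiting the bilinear structure of Equation \eqref{eq:def-zeta-mv} and the fact that both solutions are probability measures. Throughout, set $\bar\tau = \max_{w} \|\tau_w\|_\infty$ and $\bar c = \max_{w,w'} \|c_{w,w'}\|_\infty$, both finite by Assumption \ref{hyp:jump-rates} and compactness of $\Omega$. For $f \in \mathcal{B}_b(E,\R)$ with $\|f\|_\infty \le 1$, subtract the two versions of \eqref{eq:def-zeta-mv}. The initial terms cancel, since both solutions start from the same $\zeta_0$.

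\textbf{The linear extinction term.} Write $\zeta_s(dx,w) - \overline\zeta_s(dx,w)$ for each $w$ and bound the difference by
\[
2\bar\tau \int_0^t \sum_{w=1}^S |\zeta_s(\cdot,w) - \overline\zeta_s(\cdot,w)|(\Omega)\, ds \le 2\bar\tau \int_0^t \|\zeta_s - \overline\zeta_s\|_{TV}\, ds .
\]

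\textbf{The quadratic colonization term.} Use the telescoping identity
\[
\zeta_s \otimes \zeta_s - \overline\zeta_s \otimes \overline\zeta_s = (\zeta_s - \overline\zeta_s) \otimes \zeta_s + \overline\zeta_s \otimes (\zeta_s - \overline\zeta_s).
\]
Each integrand $c_{w,w'}(x,y)\big(f(y,w) - f(y,w')\big)$ is bounded by $2\bar c$. Integrating first against the probability measure $\zeta_s$ (respectively $\overline\zeta_s$) gives a bounded measurable function of the other variable. Its integral against the signed measure $\zeta_s - \overline\zeta_s$ is then controlled by $\|\zeta_s - \overline\zeta_s\|_{TV}$. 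Summing over the $(S+1)^2$ type pairs costs only a constant, since the sum over $w$ of the marginal total variations is the total variation on $E$. This gives a bound $C_S\, \bar c \int_0^t \|\zeta_s - \overline\zeta_s\|_{TV}\, ds$.

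\textbf{Conclusion.} Take the supremum over $\|f\|_\infty \le 1$ and use the dual characterization of the total variation norm. This yields
\[
\|\zeta_t - \overline\zeta_t\|_{TV} \le L \int_0^t \|\zeta_s - \overline\zeta_s\|_{TV}\, ds
\]
with $L$ independent of $t$. Measurability of $s \mapsto \|\zeta_s - \overline\zeta_s\|_{TV}$ follows from the equation itself: the map is bounded by $2$ and Lipschitz-type continuous in $s$, because the right-hand side of \eqref{eq:def-zeta-mv} has bounded derivative in $t$ uniformly in $f$. Gronwall's lemma then gives $\|\zeta_t - \overline\zeta_t\|_{TV} = 0$ for all $t$.

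\textbf{Main obstacle.} The delicate point is justifying the dual step: the supremum must be taken over all bounded measurable $f$ rather than continuous $f$, which is allowed because \eqref{eq:def-zeta-mv} is stated for measurable bounded $f$. The product-measure bound also needs care. I would verify it via Fubini, splitting $\zeta_s - \overline\zeta_s$ into its Jordan decomposition on each fibre $\{w\}$ so that all integrals are against genuine nonnegative measures.
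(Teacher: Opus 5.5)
Your proposal is correct and follows the paper's proof essentially step by step. It uses the same split into the extinction term and the two telescoped colonization terms (the paper's $A_1$, $A_2$, $A_3$), bounds each by a constant times $\|\zeta_s - \overline{\zeta}_s\|_{TV}$ using that the measures are probabilities, and concludes with Gronwall's lemma. Two of your additions make precise steps the paper glosses over: taking the supremum over $\|f\|_\infty \le 1$ before invoking Gronwall (the paper applies it directly to $|\langle \zeta_t - \overline{\zeta}_t, f\rangle|$), and the Lipschitz-in-time remark that gives measurability of $s \mapsto \|\zeta_s - \overline{\zeta}_s\|_{TV}$.
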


\begin{proof}
   Let $f \in \mathcal{B}_b(E, \R)$ such that $\|f\|_\infty \leq 1$, and consider two solutions $\zeta, \overline{\zeta}$ of Equation \eqref{eq:def-zeta-mv} starting from the same initial condition. It follows from Equation \eqref{eq:def-zeta-mv} that for any $t \geq 0$, 
   \begin{equation*}
       |\angles{\zeta_t - \overline{\zeta}_t}{f} | \leq \int_0^t ( A_1(s) + A_2(s) + A_3(s)) ds,
   \end{equation*}
   with 
   \begin{equation*}
   \begin{aligned}
        A_1(s) &= \Big| \sum_{w=1}^S \int_\Omega \tau_w(x)\big( f(x,0) - f(x,w) \big) (\zeta_s - \overline{\zeta}_s)(dx, w) \Big|, \\
        A_2(s) &= \Big| \sum_{w,w'=1}^S \int_\Omega \int_\Omega c_{w,w'}\big( f(y,w) - f(y,w') \big) \big(\zeta_s(dx, w) - \overline{\zeta}_s(dx, w) \big) \zeta_s(dy, w') \Big|, \\
        \text{and } A_3(s) &= \Big| \sum_{w,w'=1}^S \int_\Omega \int_\Omega c_{w,w'}\big( f(y,w) - f(y,w') \big) \overline{\zeta}_s(dx, w) \big(\overline{\zeta}_s(dy,w') - \zeta_s(dy, w') \big) \Big|.
   \end{aligned}
   \end{equation*}
   
   Recall that for any bounded measurable function $g$, by definition of the total variation norm, 
   \[ |\angles{\zeta_s - \overline{\zeta}_s}{g}| = \Big| \sum_{w=1}^S \int_\Omega g(x,w)\big(\zeta_s - \overline{\zeta}_s\big)(dx,w)\Big| \leq \|g\|_\infty \|\zeta_s - \overline{\zeta}_s\|_{TV}. \]
   
   Since for any $w$, $\tau_w$ is continuous on $\Omega$ (and thus in particular measurable and bounded), there thus exists $C_1 > 0$ such that, for any $s \in [0,t]$,
   \[ A_1(s) \leq C_1 \| \zeta_s - \overline{\zeta}_s \|_{TV}. \]
   Similarly, the application
   \begin{equation*}
       (x, w) \in E \mapsto \sum_{w' = 1}^S \int_\Omega c_{w,w'}\big( f(y,w) - f(y,w') \big) \zeta_s(dy, w')
   \end{equation*}
   is measurable and bounded. Thus, there exists $C_2 > 0$ such that for any $s \in [0,t]$, 
   \[ A_2(s) \leq C_2 \| \zeta_s - \overline{\zeta}_s \|_{TV}. \]
   Reasoning analogously, we also obtain existence of $C_3 > 0$ such that for any $s \in [0,t]$, 
   \[ A_3(s) \leq C_3 \| \zeta_s - \overline{\zeta}_s \|_{TV}. \]
   
   Hence, letting $C = C_1 + C_2 + C_3 > 0$, we finally obtain that  
   \[ |\angles{\zeta_t - \overline{\zeta}_t}{f} | \leq C \int_0^t \| \zeta_s - \overline{\zeta}_s \|_{TV} ds.\]
   It follows from Gronwall's lemma (\cite[Appendix Theorem 5.1]{ethierMarkovProcessesCharacterization1986}) that 
   \[ |\angles{\zeta_t - \overline{\zeta}_t}{f} |= 0. \]
   Since $t$ is arbitrary, this concludes the proof.
\end{proof}

Now that we are sure that $(\eta^K)_{K \geq 1}$ converges to the unique solution to Equation \eqref{eq:def-zeta-mv}, it remains to show that the deterministic process defined in Equation \eqref{eq:def-zeta} indeed is uniquely characterized by Equation \eqref{eq:full-ide-system}, and solves this measure-valued equation.

\begin{prop}
    The integro-differential system \eqref{eq:full-ide-system} admits a unique solution, which provides the unique solution $\zeta$ of Equation \eqref{eq:def-zeta-mv} through Equation \eqref{eq:def-zeta}.
\end{prop}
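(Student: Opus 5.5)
The plan is to exploit what has already been proved: by tightness, Proposition \ref{prop:identification} and Proposition \ref{prop:uniqueness}, the sequence $(\eta^K)$ converges to a deterministic limit $\zeta$, which is the unique solution of \eqref{eq:def-zeta-mv}. Existence and uniqueness for \eqref{eq:full-ide-system} will be read off from $\zeta$ rather than obtained by a fixed-point argument. The proof has three steps: extract densities from $\zeta$, show they solve \eqref{eq:full-ide-system}, and prove uniqueness for \eqref{eq:full-ide-system} by pushing it back to \eqref{eq:def-zeta-mv}.

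\textbf{Step 1 (densities).} The proof of Proposition \ref{prop:identification} shows that $\zeta_t(dx\times\bbrackets{0}{S})=\mu(x)dx$ for all $t$. Hence each $\zeta_t(\cdot,i)$ is absolutely continuous with respect to $\mu(x)dx$ and has a Radon--Nikodym density. I would write this density as $u_i(t,x)\in[0,1]$, with $\sum_i u_i(t,\cdot)=1$ a.e., so that \eqref{eq:def-zeta} holds. Joint measurability in $(t,x)$ needs some care. I would handle it by taking $f(x,w)=g(x)\ind_{w=i}$ in \eqref{eq:def-zeta-mv}. This shows that $t\mapsto\angles{\zeta_t}{f}$ is Lipschitz, uniformly in $\|g\|_\infty\le1$, so $t\mapsto\zeta_t(\cdot,i)$ is Lipschitz in total variation. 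A measurable version of the densities then exists, for instance via the $L^1(\mu)$-valued map $t\mapsto u_i(t,\cdot)$, which is Lipschitz.

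\textbf{Step 2 ($u$ solves \eqref{eq:full-ide-system}).} Plug $f(x,w)=g(x)\ind_{w=i}$ into \eqref{eq:def-zeta-mv} and expand the colonisation term. The pair $(w,w')$ contributes $+g(y)$ when $w=i$ (the arrival patch becomes $i$) and $-g(y)$ when $w'=i$. After renaming variables, this gives
\begin{equation*}
\int_\Omega g(x)\big(u_i(t,x)-u_i^0(x)\big)\mu(x)dx=\int_0^t\int_\Omega g(x)\,F_i(u(s))(x)\,\mu(x)dx\,ds,
\end{equation*}
where $F_i$ is the right-hand side of \eqref{eq:full-ide-system}. The sign and index conventions of $c_{i,j}$ have to be matched with those of \eqref{eq:full-ide-system}; this bookkeeping must be checked but is routine. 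Since $g$ is arbitrary and $\mu>0$, it follows that $u_i(t,x)=u_i^0(x)+\int_0^tF_i(u(s))(x)ds$ for a.e.\ $x$. The right-hand side is Lipschitz in $t$, so we may choose the version defined by this formula, which is absolutely continuous in $t$ and satisfies the equation pointwise. The sum constraint forces $u_0=1-\sum_{i\ge1}u_i$.

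\textbf{Step 3 (uniqueness).} Conversely, suppose $(u_i)$ is any solution of \eqref{eq:full-ide-system} with values in $[0,1]$ and $\sum_i u_i=1$. Define $\tilde\zeta$ by \eqref{eq:def-zeta}. Reversing the computation of Step 2, $\tilde\zeta$ satisfies \eqref{eq:def-zeta-mv} for all product test functions $g(x)\ind_{w=i}$. Every bounded measurable $f$ on $E$ is a finite sum of such functions, since $w$ ranges over a finite set, so $\tilde\zeta$ satisfies \eqref{eq:def-zeta-mv} for all $f$. Proposition \ref{prop:uniqueness} then gives $\tilde\zeta=\zeta$, so the densities coincide $\mu(x)dx$-a.e. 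Because $\mu>0$ and both versions satisfy the integral equation pointwise, they agree for every $x$.

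The main obstacle is Step 1 together with the index bookkeeping of Step 2: one needs a jointly measurable version of the densities for which the integral identity holds for every $x$, and not merely almost everywhere. I would first try to obtain this by defining $u_i$ through the integral formula itself, using the Lipschitz-in-TV continuity of $\zeta$.
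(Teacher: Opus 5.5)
Your proposal is correct and follows the paper's proof step for step. You read off densities from the conserved marginal $\mu(x)\,dx$, test \eqref{eq:def-zeta-mv} against $g(x)\ind_{\{w=i\}}$ to obtain \eqref{eq:full-ide-system}, and get uniqueness from Proposition \ref{prop:uniqueness} by noting that any solution of \eqref{eq:full-ide-system} induces one of \eqref{eq:def-zeta-mv} via \eqref{eq:def-zeta}. Your care about measurable versions and pointwise versus a.e.\ identities goes beyond the paper, which jumps from $\int F_i\, g\,\mu=0$ to $F_i\equiv 0$. One caveat: $F_i(u(s))(x)$ depends on the local values $u_j(s,x)$, so ``the version defined by this formula'' only yields a pointwise solution for a.e.\ $x$; to get every $x$, solve at each $x$ the affine ODE whose coefficients are the version-independent nonlocal integrals.
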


\begin{proof}
Recall from the proof of Proposition \ref{prop:identification} that for any $t \geq 0$, 
    \[ \zeta_t(dx \times \bbrackets{0}{S}) = \mu(x)dx, \]
which means that $\zeta_t$ is absolutely continuous with respect to the following measure on $E$: 
\[ dx \otimes \sum_{i=0}^S \delta_i(dw). \]   
Thus, there exist $u_0, \dots u_S \in \mathcal{B}(\R_+ \times E, [0,1])$  such that $\sum_{i = 0}^S u_i \leq 1$ and 
\[ \zeta_t(dx, dw) = \mu(x) dx \sum_{i=0}^S u_i(t,x) \delta_i(dw). \]

Let $i \in \bbrackets{1}{S}$ and $g \in \mathcal{B}_b(\Omega, \R)$. Applying Equation \eqref{eq:def-zeta-mv} to $f(x,w) = g(x) \setind{w=i}$ leads to
\begin{equation*}
    \int_\Omega F_i(t,x) g(x) \mu(x) dx = 0,
\end{equation*}
where 
\begin{equation*}
\begin{aligned}
    F_i(t,x) & = u_i(t,x) - u_0(t,x) - \tau_i(x) u_i(t,x) + \sum_{j = 1}^S u_j(t,x) \int_\Omega u_i(t,y) c_{ij}(y,x) \mu(y) dy \\
    & - \sum_{j = 1}^S u_i(t,x) \int_\Omega u_j(t,y) c_{ji}(y,x) \mu(y) dy.
\end{aligned}
\end{equation*}
Since $g$ is arbitrary and $\mu > 0$, it follows that
\[ F_i(t,x) = 0 \quad \forall x \in \Omega.\]
As this holds for any $i \in \bbrackets{1}{S}$ and $t \geq 0$, and $u_0 = 1 - \sum_{i=1}^S u_i$, we conclude that the family $(u_i, i \in \bbrackets{1}{S})$ indeed solves Equation \eqref{eq:full-ide-system}. 

In addition, any solution to Equation \eqref{eq:full-ide-system} provides a solution to Equation \eqref{eq:def-zeta-mv} through Equation \eqref{eq:def-zeta}. Thus, Proposition \ref{prop:uniqueness} implies that Equation \eqref{eq:full-ide-system} admits a unique solution. 
\end{proof}

\section*{Funding}
The authors are partially funded by the Chair ”Modélisation Mathématique et Biodiversité” of Veolia Environnement-École Polytechnique-Muséum national d’Histoire naturelle-Fondation X and by ANR project HAPPY (ANR-23-CE40-0007).

\section*{Acknowledgements}
The authors are grateful to Jean-Fran\c cois Delmas and Pierre-Andr\' e Zitt for stimulating discussions.

\end{document}